\documentclass[reqno,oneside]{amsart}

\usepackage[T1]{fontenc}
\usepackage[utf8]{inputenc}
\usepackage[english]{babel}
\usepackage{microtype,indentfirst,amsmath,amssymb,mathtools,braket,amsthm,booktabs,caption,graphicx,tabularx,siunitx,array,rotating,sidecap,subfig,wrapfig,afterpage,blindtext,color,mathrsfs,tikz-cd,inputenc,cancel,pgfplots,tikz,verbatim,enumitem,geometry}

\usepackage[autostyle]{csquotes}
\usepackage[backend=biber,style=numeric]{biblatex}
\usepackage{guit}

\usepackage[final]{pdfpages}

\usepackage[english]{varioref}
\usepackage{hyperref}
\hypersetup{hidelinks}

\newcommand{\numberset}{\mathbb}
\newcommand{\N}{\numberset{N}}

\newcommand{\R}{\numberset{R}}
\newcommand{\C}{\numberset{C}}

\renewcommand{\epsilon}{\varepsilon}
\renewcommand{\phi}{\varphi}

\DeclarePairedDelimiter{\norm}{\lVert}{\rVert}

\DeclareMathOperator{\Realpart}{Re}
\renewcommand{\Re}{\Realpart}
\DeclareMathOperator{\Impart}{Im}
\renewcommand{\Im}{\Impart}

\DeclareMathOperator{\Mat}{Mat}
\DeclareMathOperator{\codim}{codim}

\DeclareMathOperator{\dist}{dist}

\newcommand{\ddc}{\mathrm{dd^c}}
\renewcommand{\d}{\mathrm{d}}
\newcommand{\dc}{\mathrm{d^c}}

\renewcommand{\vec}{\boldsymbol}

\theoremstyle{definition}
\newtheorem{definition}{Definition}[section]

\newtheorem{introdef}{Definition}

\theoremstyle{plain}
\newtheorem{theorem}[definition]{Theorem}
\newtheorem{lemma}[definition]{Lemma}
\newtheorem{corollary}[definition]{Corollary}
\newtheorem{proposition}[definition]{Proposition}

\newtheorem*{mainthm}{Main Theorem}

\theoremstyle{remark}
\newtheorem{remark}[definition]{Remark}

\newtheorem{intrormk}[introdef]{Remark}

\pgfplotsset{compat=1.18}

\makeatletter
	\renewcommand{\paragraph}{
		\@startsection{paragraph}{4}
		{\z@}
		{2ex \@plus 0.5ex \@minus 0.2ex}
		{-1em}
		{\normalfont\bfseries}
	}
	\renewcommand{\subsection}{
		\@startsection{subsection}{2}
		{\z@}
		{.7\linespacing \@plus\linespacing}
		{.5\linespacing}
		{\normalfont\scshape\centering}
}
\makeatother

\title{Analytic structure of $q$-pseudoconcave subsets of continuous graphs}
\author{Filippo Valnegri}
\address{Department of Mathematics and Applications, University of Milano-Bicocca, Milano, 20126, Italy}
\email{\href{mailto:filippo.valnegri@unimib.it}{filippo.valnegri@unimib.it}}
\date{}

\begin{document}
	
\begin{abstract}
	The aim of this article is to find under which conditions there exists a foliation by $n$-dimensional complex manifolds on a closed subset $Z\subset\C^N$, which is locally a continuous graph over a closed subset of $\C^n\times\R$. We prove that such foliation does exist if $Z$ is $q$-pseudoconcave, for any $q\geq n$. We also prove that this bound on the index of pseudoconcavity is sharp. Namely, we construct many $q$-pseudoconcave subsets of $\C^N$, for $q<n$, which are smooth graphs over closed subsets of $\C^n\times\R$ but with no analytic structure. As an application, we show that an $n$-pseudoconcave set sitting in a subset of $\C^N$, which is locally a differentiable graph over a domain of $\R^{2n+1}$, is foliated by $n$-dimensional complex manifolds.
\end{abstract}

\subjclass[2020]{Primary: 32F10; Secondary: 32U05, 32V40}

\maketitle

\section*{Introduction}
The existence of an analytic structure inside a subset of $\C^N$ (for $N\geq 2$) is an interesting and fundamental problem in the field of several complex variables. Namely, given a subset $Z\subset\C^N$ and a fixed point $p\in Z$, when does there exist an analytic variety of positive dimension passing through $p$ and contained in $Z$?

Clearly, such an analytic variety does not exist in general. In particular, for each $N\geq 2$ we can find a closed subset $E\subset\C^N$ which contains no analytic varieties of positive dimension (see~\cite[Theorem~1.1]{har_shch_tom:wermer}). On the other hand, various results lead to a positive answer for subsets which can be represented as graphs. For this class of subsets, the sufficient condition required depends on the impossibility to extend holomorphic functions defined on the complement of $Z$. This impossibility is clearly a necessary condition for the existence of an analytic variety in $Z$. Indeed, if there exists an analytic variety $A$ such that $p\in A\subset Z$, then $A$ is the zero locus of a holomorphic function and thus we can find a holomorphic function on $\C^N\smallsetminus Z$ which does not extend to $A$. Conversely, Hartogs~\cite{hartogs:analytischen} proved that, if $Z\subset\C^N$ is a continuous graph over a domain $D\subset\C^{N-1}$ such that its complement $(D\times\C)\smallsetminus Z$ is a domain of holomorphy, then $Z$ is a complex hypersurface. Furthermore, Tréprau~\cite{trepreau:prolongement} showed that the same conclusion holds for reasonably smooth real hypersurfaces. Namely, if $Z\subset\C^N$ is a $\mathscr{C}^2$ graph over a domain $D\subset\C^{N-1}\times\R$ and its complement $(D\times i\R)\smallsetminus Z$ is the disjoint union of two domains of holomorphy, then $Z$ is union of complex hypersurfaces. Trépreau's Theorem was generalised to the case of continuous graphs by Shcherbina~\cite{shcherbina:polynomial_hull} (for $N=2$) and Chirka~\cite{chirka:trepreau_theorem} (for $N\geq 2$).

By Levi's Theorem, the assumption on the complement of $Z$ can be rephrased in terms of a pseudoconcavity property of $Z$ itself. Adopting this perspective, Pawlaschyk and Shcherbina~\cite{pawl_shcher:foliations} extended Chirka's Theorem to higher codimension graphs. Namely, if $Z\subset\C^N$ is a continuous graph over a domain $D\subset\C^n\times\R$ (for $n<N$) and it is an $n$-pseudoconcave subset of $D\times i\R\times\C^p$ (for $p=N-n-1\geq 0$), then it is foliated by $n$-dimensional complex manifolds.
\begin{intrormk}
	Here and in what follows we use the term \emph{$q$-pseudoconcave} in the sense of Słodkowski (see Definition~\ref{def:pseudoconcave}), and the term \emph{foliated} in a generalised sense (see Definition~\ref{def:foliated}).
\end{intrormk}

By considering $q$-pseudoconcavity sets instead of domains of holomorphy, we are drastically changing the nature of the problem. Indeed, while domains of holomorphy are governed by holomorphic functions, $q$-pseudoconcave sets are governed by $q$-plurisubharmonic ones. This new perspective suggests that the existence of an analytic structure on $Z$ is intrinsically governed by the behaviour of $q$-plurisubharmonic functions on it. 

This realisation naturally leads us to consider another equivalent, yet far more flexible, property: the $q$-local maximum property (see Definition~\ref{def:locmaxset}). Indeed, this property tells us exactly how $q$-plurisubharmonic functions behave locally on a set. The equivalence between $q$-pseudoconcavity and the $q$-local maximum property was established by Słodkowski, who proved, for $V\subset\C^N$ open, that $Z\subset V$ is a $q$-pseudoconcave subset of $V$ if and only if it is a $(q-1)$-local maximum set (see~\cite[Theorem~4.2]{slodkowski:loc_max_property}).

The $q$-local maximum property is a very powerful tool, which is already crucial in various studies of different nature. For example, it is a key factor in the study of obstructions to the existence of strictly plurisubharmonic exhaustion functions on complex manifolds (see~\cite{mon_slod_tom:weaklycomplete,mon_tom:compact,slod_tom:minimal_kernels}), but it also arises in many other problems (e.g., see~\cite{pol_shch:separable,slodkowski:decomposition}).

In this paper, using the full strength of the $q$-local maximum property, we prove the existence of an analytic structure on a class of subsets of $\C^N$ much wider than the one previously known. To this end, we adapt and extend some of the techniques already implemented by Pawlaschyk and Shcherbina, in order to operate directly with the $q$-local maximum property, rather than with $q$-pseudoconcavity. Namely, we prove the following Theorem.
\begin{mainthm}
	Let $Z\subset\C^N$ be an $n$-pseudoconcave subset (for $1\leq n<N$) which is locally the graph of a continuous function $g:X\to\R\times\C^p$ (for $p=N-n-1\geq 0$), where $X\subset\C^n\times\R$ is a closed subset. Then $Z$ is foliated by $n$-dimensional complex manifolds.
\end{mainthm}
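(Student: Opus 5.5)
Since the statement is local, fix $p\in Z$ and a neighbourhood $V$ in which $Z\cap V$ is the graph of a continuous $g=(h,f):X\to\R\times\C^p$, with $X\subset\C^n\times\R$ closed. Write coordinates $(z,u+iv,w)\in\C^n\times\C\times\C^p$, so that $Z\cap V=\{(z,u+ih(z,u),f(z,u)):(z,u)\in X\}$. By Słodkowski's equivalence~\cite[Theorem~4.2]{slodkowski:loc_max_property}, $n$-pseudoconcavity of $Z$ in $V$ is the same as $Z$ being an $(n-1)$-local maximum set. From now on I work only with this property: for every compact $K\subset Z$ and every $(n-1)$-plurisubharmonic function $\psi$ on a neighbourhood of $K$, we have $\max_K\psi\le\max_{\partial_Z K}\psi$ (relative boundary).

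\textbf{Step 1: level sets of $u$ are $(n-1)$-local maximum sets.} I would show that for each $t$ the slice $Z_t=Z\cap\{u=t\}$, which is the graph of a continuous map over $X_t=\{z:(z,t)\in X\}\subset\C^n$, inherits the $(n-1)$-local maximum property. The idea is to take an $(n-1)$-psh $\psi$ violating the maximum principle on $Z_t$ and perturb it to $\psi+M(u-t)^2$ or $\psi-\epsilon|{\cdot}|^2+Mu^2$-type functions. Since $u$ is pluriharmonic, $(u-t)^2$ is psh, so the perturbation stays $(n-1)$-psh and penalises leaving the slice. One then obtains a contradiction with the property on $Z$ near $Z_t$, arguing as in Pawlaschyk–Shcherbina but using the maximum property directly rather than pseudoconcavity.

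\textbf{Step 2: each slice is locally an $n$-dimensional complex graph.} Now $Z_t$ is the graph over $X_t\subset\C^n$ of a continuous map $G_t=(h(\cdot,t),f(\cdot,t))$ into $\C^{1+p}$, and $Z_t$ is $(n-1)$-local maximum. I would prove two things.
\begin{enumerate}
\item $X_t$ is open. At a boundary point $z_0$ of $X_t$, the function $\psi=-|z-z_1|^2\cdot$(small)$+\ldots$, or more simply a psh peak function in the $z$-variables only (e.g.\ $\Re\langle z,z_0-z_1\rangle$ adapted to an exterior ball touching $X_t$), attains a strict local maximum on $Z_t$ at the corresponding graph point. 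This contradicts the local maximum property. Here the continuity of the graph is used to transfer compactness.
\item $G_t$ is holomorphic. Following the Hartogs/Chirka/Slodkowski strategy, for a linear functional $\ell$ on the fibre I would test the $(n-1)$-psh functions $\pm\Re(\ell(w)-F(z))$ and $|\ell(w)-F(z)|^2$-type functions, where $F$ is holomorphic. The key point is that an $(n-1)$-local maximum set of real dimension $2n$ which is a graph over an open set of $\C^n$ forces the graph map to have the maximum principle for all functions of the form $\phi(z,G(z))$ with $\phi$ $(n-1)$-psh. Taking $\phi=\Re(\text{polynomial})$ first shows each component of $G_t$ satisfies a submean property, and then approximating, one shows it is pluriharmonic with pluriharmonic conjugates, hence holomorphic. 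This mirrors the classical graph-theorem proof of Słodkowski–Shcherbina.
\end{enumerate}

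\textbf{Step 3: assembling the foliation.} The leaves are then the connected components of the complex manifolds $Z_t$. Continuity of $g$ gives continuous dependence on $t$, which yields a foliation in the sense of Definition~\ref{def:foliated}.

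I expect Step 1 to be the main obstacle, because $X$ is only closed and the slices may vary wildly in $t$. The penalisation must be done carefully: compact sets in $Z_t$ have to be thickened to compact sets in $Z$ whose relative boundary is controlled. I would first try to localise with a $t$-strip $\{|u-t|\le\delta\}$ and use the term $M(u-t)^2$ with $M\to\infty$ to dominate the boundary pieces lying on $\{|u-t|=\delta\}$.
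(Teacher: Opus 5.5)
\textbf{There is a genuine gap, and it starts in Step~1.} The slices $Z_t=Z\cap\{\Re w=t\}$ are in general neither $(n-1)$-local maximum sets nor complex manifolds, so the leaves cannot be the components of the $Z_t$.

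\emph{Counterexample.} Take $n=1$, $N=2$ and $Z=\{\Im w=\Im z\}$, the graph of $g(z,u)=\Im z$ over $X=\C\times\R$. Its complement is a union of two half-spaces, so $Z$ is $1$-pseudoconcave. It is foliated by the complex lines $\{w=z+c\}$, $c\in\R$. However, $Z_t=\{(z,t+i\Im z)\}$ is a totally real plane. The pluriharmonic function $-\Re\bigl((z-w+t)^2-(w-t)^2\bigr)$ restricts to $-|z|^2$ on $Z_t$, so it has a strict maximum there. Hence Step~1 is false.

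\emph{Why slices cannot be leaves.} $w$ is constant on any connected complex manifold contained in $\{\Re w=t\}$. So Steps~2--3 would force every leaf to lie in some $\{w=\mathrm{const}\}$, which is false in the example above.

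\emph{Step~2(1) also fails.} $Z=\{w=z_1\}\subset\C^{n+1}$ satisfies the hypotheses, but $X_t=\{\Re z_1=t\}$ has empty interior.

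\emph{The penalisation cannot rescue Step~1.} Adding $+M(u-t)^2$ rewards leaving the slice rather than penalising it. On the other hand, $-M(u-t)^2$ has a rank-one negative Levi form, so $\psi-M(u-t)^2$ is only $n$-plurisubharmonic. Such functions are not controlled by the $(n-1)$-local maximum property of $Z$.

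\textbf{What is missing, and what the paper does instead.} You need a mechanism that actually produces the leaves. These are graphs $\{w=f_\alpha(z)\}$, in general with $\Re f_\alpha$ non-constant, and producing them is the content of the Trépreau--Shcherbina--Chirka theorem. The paper imports that theorem rather than bypassing it, in three steps.

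First, it projects away the $\C^p$-fibre (Lemma~\ref{lemma:reduction}). For $\phi$ $(n-1)$-plurisubharmonic, $\phi\circ\pi$ is still $(n-1)$-plurisubharmonic, because its Levi matrix only acquires zero blocks. Since $Z_0$ is a graph, a strict maximum on $\pi(Z_0)$ lifts to one on $Z_0$. Hence $\Gamma(g')$ is an $n$-pseudoconcave subset of $\C^{n+1}$.

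Second, Corollary~\ref{coro:foliation} gives the leaves $\ell_\alpha=\Gamma(f_\alpha)$. Envelopes of holomorphy show that $\Gamma(g')$ lies in Chirka's foliated set $E=\Gamma(u)\cap\Gamma(v)$. The continuity principle then shows it is a union of leaves of $E$.

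Third, $g''$ is lifted along the leaves and shown to be holomorphic on each $\ell_\alpha$ by contradiction. Hartogs' theorem yields a local peak function $\rho$ for the graph of a non-holomorphic component over $\ell_{\alpha_0}$. One then adds $\epsilon\log|\hat h|$, where $\hat h\equiv1$ on $\ell_{\alpha_0}$ and $\hat h\equiv0$ on neighbouring leaves. This isolates a relatively compact open piece of $\pi_j(Z_{0,I})$ on which the maximum principle fails.

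Your Hartogs-type Step~2(2) corresponds only to this last step. It has to be carried out over the leaves $\ell_\alpha$, not over the slices. The leaf-separating function is needed there because the lift of a single leaf is not itself known to be an $(n-1)$-local maximum set.
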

Since a $q$-pseudoconcave set is also $m$-pseudoconcave for any $m\leq q$, the Main Theorem also shows that the same conclusion holds for every $q$-pseudoconcave subset of $\C^N$, which is locally a continuous graph over a closed subset of $\C^n\times\R$, for any $q\geq n$.

It is then natural to ask if this bound on the index of pseudoconcavity is sharp. By properly modifying the closed subset $E\subset\C^N$ with no analytic structure given by~\cite[Theorem~1.1]{har_shch_tom:wermer}, we indeed show the sharpness of this bound. Namely, for each $q<n$ we construct many $q$-pseudoconcave subsets of $\C^N$, which are globally smooth graphs over closed subsets of $\C^n\times\R$ but with no analytic structure.

\paragraph{Organisation of the article.}
In Section~\ref{sec:first_gen} we extend Chirka's Theorem to $(N-1)$-pseudoconcave subsets of continuous graphs over domains of $\C^{N-1}\times\R$. Moreover, we obtain foliations on $(N-1)$-pseudoconcave sets which are continuous graphs themselves, over closed subsets of $\C^{N-1}\times\R$.

In Section~\ref{sec:locmax_set} we gather some properties about $q$-local maximum sets and the existence of certain $q$-plurisubharmonic functions when a continuous graph does not have the $q$-local maximum property.

In Section~\ref{sec:limit} we prove that the upper closed limit of a net of $q$-local maximum sets is still a $q$-local maximum set.

In Section~\ref{sec:second_gen} we prove our Main Theorem (see Theorem~\ref{thm:foliation'}). This is done constructing a foliation on an underlying set to the one considered and then lifting this foliation using the function which defines the graph.

In Section~\ref{sec:applications} we gather some applications of our Main Theorem. For example, we obtain foliations on $n$-pseudoconcave sets contained in subsets of $\C^N$ which are locally differentiable graphs over domains of $\R^{2n+1}$.

In Section~\ref{sec:sharpness} we show that the choice for index of pseudoconcavity in the Main Theorem is sharp. Namely, we construct in $\C^N$ an $(n-1)$-pseudoconcave subset of a smooth graph, defined over $\C^n\times\R$, which has no analytic structure.

\paragraph{Acknowledgments.} The author would like to thank Zbigniew Słodkowski for the helpful suggestions and remarks on a preliminary version of this paper and his PhD supervisor, Samuele Mongodi, for his guidance and the useful discussions about this work.

\section{The hypersurface case}
\label{sec:first_gen}
In this section, we prove that every $n$-pseudoconcave subset of a continuous graph, which is a real hypersurface in $\C^{n+1}$, is foliated by complex hypersurfaces.

First of all, let us recall the definition of $q$-pseudoconvexity and $q$-pseudo\-concavity we are referring to.
\begin{definition}
\label{def:pseudoconvex}
	Let $U,V\subset\C^n$ be two open subsets, with $U\subset V$. We say that $U$ is \emph{$q$-pseudoconvex in $V$} (for $0\leq q\leq n-1$) if there exists an open neighbourhood $W\subset V$ of $V\cap bU$ such that the function
	\[
		\phi:W\cap U\to\R, \quad \phi(z)\coloneqq-\log\bigl(\dist(z,bU)\bigr)
	\]
	is $q$-plurisubharmonic, where $bU$ denotes the topological boundary of $U$.
\end{definition}
\begin{remark}
	There are various definitions of pseudoconvexity. The one stated in Definition~\ref{def:pseudoconvex} is due to Słodkowski. However, in the smooth setting the following are equivalent:
	\begin{enumerate}
		\item[(a)] $U$ is $q$-pseudoconvex in $V$ in the sense of Słodkowski;
		\item[(b)] $U$ is Hartogs $(n-q-1)$-pseudoconvex relatively to $V$;
		\item[(c)] $U$ is Levi $q$-pseudoconvex in $V$;
		\item[(d)] $U$ is $(q+1)$-convex in the sense of Grauert--Andreotti.
	\end{enumerate}
\end{remark}
\begin{definition}
\label{def:pseudoconcave}
	Let $V\subset\C^n$ be an open subset and let $Z\subset V$ be a closed one. We say that $Z$ is a \emph{$q$-pseudoconcave subset} of $V$ (for $1\leq q\leq n-1$) if $U\coloneqq V\smallsetminus Z$ is $(n-q-1)$-pseudoconvex in $V$.
\end{definition}

As anticipated in the introduction, our aim is to construct a foliation on a closed subset of $\C^n$. Since a closed subset does not have a priori any differentiable structure (e.g., local charts), we cannot construct on it a foliation in the classical sense. We will then use the term \emph{foliated} is the following sense (the name is motivated by analogy with the $\mathscr{C}^2$ case).
\begin{definition}
\label{def:foliated}
	Let $Z\subset\C^n$ be a closed subset. We say that $Z$ is \emph{foliated} if it can be written as the disjoint union of some embedded submanifolds, all of the same dimension.
\end{definition}
\begin{remark}
	Note that we do not assume any kind of transversal geometry for such foliations.
\end{remark}

After these preliminary definitions, we can now delve into the main purpose of this section. Let us formally state its main result. 
\begin{theorem}
\label{thm:foliation}
	For $n\geq1$, let $D\subset\C^n\times\R$ be a domain. Let $g:D\to\R$ be a continuous function and let
	\[
		\Gamma(g)=\bigl\{(z,w)\in D\times i\R \mid \Im(w)=g\bigl(z,\Re(w)\bigr)\bigr\}
	\]
	be its graph in $\C^{n+1}=\C^n_z\times(\R\times i\R)_w$. If $Z_0\subset\Gamma(g)$ is an $n$-pseudoconcave subset, then there exists a unique foliation on it given by $n$-dimensional complex manifolds.
\end{theorem}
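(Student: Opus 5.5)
The plan is to reduce to the known Chirka / Pawlaschyk--Shcherbina situation ($Z_0=\Gamma(g)$, or more precisely $Z_0$ a full graph over a domain) by a local argument: near each point $p=(z_0,w_0)\in Z_0$, the leaf through $p$ should be the graph of a function over an $n$-dimensional complex parameter, and we want to show existence and uniqueness of such a complex hypersurface germ inside $Z_0$.

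\textbf{Step 1: translate the hypothesis.} Set $Z_0=\Gamma(g)\cap Z_0$ with projection $X\coloneqq\pi(Z_0)\subset D$, where $\pi(z,w)=(z,\Re w)$. By Słodkowski's theorem, $n$-pseudoconcavity of $Z_0$ in $\C^{n+1}$ is equivalent to $Z_0$ being an $(n-1)$-local maximum set. For $N=n+1$, Definition~\ref{def:pseudoconcave} says exactly that $U=\C^{n+1}\smallsetminus Z_0$ is $0$-pseudoconvex, i.e.\ pseudoconvex, near $Z_0$. Equivalently, $-\log\dist(\cdot,Z_0)$ is plurisubharmonic near $Z_0$.

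\textbf{Step 2: local existence of analytic discs via fibration by slices.} Fix $p\in Z_0$. For each complex line $L$ in $\C^n_z$ through $z_0$, consider the three-real-dimensional slice $Z_0\cap\bigl((z_0+L)\times\C\bigr)$. This slice is a closed subset of a continuous graph over a domain of $\C\times\R$. Pseudoconvexity of the complement restricts to complex $2$-planes, so the slice is $1$-pseudoconcave in $\C^2$. This is the Shcherbina setting, i.e.\ the maximum-modulus or Hartogs-type situation for graphs in $\C^2$. Here I would adapt Shcherbina's argument: the polynomial hull / local maximum property forces, through each point of the slice, the existence of a holomorphic disc $\{(z_0+\zeta v,\,h(\zeta))\}$ contained in the slice. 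The closedness of $Z_0$ replaces the domain hypothesis only locally. The key technical point is that near $p$ the projection $X$ must contain a neighbourhood of $(z_0,\Re w_0)$ in the "$z$-directions", or else the set is locally peeled by a plurisubharmonic function violating the local maximum property. I would argue this via the $(n-1)$-local maximum property: if $X$ misses points $(z,t)$ for all $t$ near some $z$ close to $z_0$, construct a psh function of the form $\chi(\Im w - c)+\epsilon|z-z_0|^2$ peaking at $p$ on a small compact piece of $Z_0$, which gives a contradiction.

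\textbf{Step 3: assemble the hypersurface and prove uniqueness.} Having discs in every complex direction, I would obtain a function $h(z)$ on a neighbourhood of $z_0$ with $(z,h(z))\in Z_0$. Holomorphy follows because $h$ is separately holomorphic along lines and continuous (Hartogs). Continuity comes from uniqueness of the discs. Uniqueness is the cleanest part. Two distinct complex hypersurfaces $\{w=h_1(z)\}$ and $\{w=h_2(z)\}$ through $p$ inside $\Gamma(g)$ have $\Im h_j=g(z,\Re h_j)$. Then $\Re(h_1-h_2)$ vanishing at one point along with $\Im$ forces the following: the holomorphic function $h_1-h_2$ maps into the set where the real parts coincide whenever the imaginary parts do. Hence $h_1-h_2$ takes values in a region whose image is not open unless it is constant zero, by the open mapping theorem. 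So the leaves are disjoint, or else they coincide, and they give the foliation in the sense of Definition~\ref{def:foliated}.

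\textbf{Main obstacle.} I expect the hardest step to be Step 2, namely producing the discs when $Z_0$ is only a closed subset rather than the whole graph. Shcherbina's and Chirka's arguments use that the complement splits into two pseudoconvex pieces above and below the graph. Here I would instead work directly with the local maximum property on compact pieces $Z_0\cap\overline B(p,r)$, and pass to limits of approximating smooth graphs, using upper semicontinuity of the local maximum property under Hausdorff limits.
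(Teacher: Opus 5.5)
There is a genuine gap, and it is in Step~2, where you expect it. Slicing by $(z_0+L)\times\C$ only reduces the statement to its own case $n=1$, namely a closed $1$-pseudoconcave subset of a continuous graph in $\C^2$. For $n=1$ the slicing does nothing at all.

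Shcherbina's theorem does not cover that case. It concerns the \emph{whole} graph over a domain, whose complement splits into the two pseudoconvex sides $\Gamma_+(g)$ and $\Gamma_-(g)$. For a proper closed subset $Z_0\subsetneq\Gamma(g)$ you only know that the connected complement is pseudoconvex. Your auxiliary claim about the $z$-projection is also unsupported: $\chi(\Im w-c)$ is plurisubharmonic only for convex $\chi$, and $\epsilon|z-z_0|^2$ has a minimum, not a peak, at $z_0$.

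The missing idea is to not build discs by hand, but to compare $Z_0$ with Tr\'epreau's set. Take $u\le g\le v$ such that $\Gamma_+(u)$ and $\Gamma_-(v)$ are the envelopes of holomorphy of $\Gamma_+(g)$ and $\Gamma_-(g)$. By Chirka, $E=\Gamma(u)\cap\Gamma(v)$ is foliated by graphs $w=f_\alpha(z)$. The paper then proceeds in two steps.
\begin{enumerate}
\item It shows $Z_0\subset E$. If $Z_0$ met $\Gamma_+(u)$, the component of $\Gamma_+(u)\smallsetminus Z_0$ containing $\Gamma_+(g)$ would be a domain of holomorphy with a boundary point inside $\Gamma_+(u)$. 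This contradicts the fact that every holomorphic function on $\Gamma_+(g)$ extends to $\Gamma_+(u)$.
\item It shows every leaf meeting $Z_0$ lies in $Z_0$. Discs in the leaf are translated to $\Gamma(g+s)$, which misses $Z_0$, and the continuity principle is applied in the pseudoconvex complement.
\end{enumerate}

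Step~3 also fails as justified. Your discs lie on lines through the single point $z_0$. Continuity plus holomorphy on every complex line through one point does not give holomorphy: $z_1^2|z_2|^2/(|z_1|^2+|z_2|^2)$ is continuous and holomorphic on every line through $0$, but it is not holomorphic. Hartogs' theorem needs holomorphy along coordinate lines through every point. So you would have to show that the discs through different points of $Z_0$ glue into a single function. That consistency is exactly what is missing, and ``continuity from uniqueness of the discs'' presupposes it.

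Your uniqueness argument, on the other hand, is sound. On $\Gamma(g)$, $\Re h_1=\Re h_2$ forces $\Im h_1=\Im h_2$, so $h_1-h_2$ omits $i\R\smallsetminus\{0\}$ while vanishing at $z_0$. By the open mapping theorem it vanishes identically. This is essentially Chirka's Lemma~2, which the paper invokes for uniqueness.
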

\begin{remark}
	When we say that $Z_0\subset\Gamma(g)$ is an $n$-pseudoconcave subset, we mean that $Z_0$ is a subset of $\Gamma(g)$ which is $n$-pseudoconcave in $D\times i\R$.
\end{remark}

In order to obtain a foliation on $Z_0$, we will consider a foliated set in $\Gamma(g)$ (constructed by Chirka), prove that it contains $Z_0$ and then show that $Z_0$ is the union of some leaves of this foliation.

First of all, let us recall a classical construction (see~\cite[Lemma~1.2]{trepreau:prolongement}). Let $D\subset\C^n\times\R$ be a domain and let $g:D\to\R$ be a continuous function. Then there exist functions $u,v:D\to\R$ upper and lower semicontinuous (respectively), with $u\leq g\leq v$, such that the domains 
\begin{align*}
	\Gamma_+(u) &= \bigl\{(z,w)\in D\times i\R \mid \Im(w)>u\bigl(z,\Re(w)\bigr)\bigr\} \\
	\Gamma_-(v) &= \bigl\{(z,w)\in D\times i\R \mid \Im(w)<v\bigl(z,\Re(w)\bigr)\bigr\}
\end{align*}
are pseudoconvex. Namely, $u$ is the infimum of all the upper semicontinuous functions $\tilde{u}\leq g$ on $D$ such that every holomorphic function on $\Gamma_+(g)$ extends holomorphically to $\Gamma_+(\tilde{u})$ and $v$ is the supremum of all the lower semicontinuous functions $\tilde{v}\geq g$ on $D$ such that every holomorphic function on $\Gamma_-(g)$ extends holomorphically to $\Gamma_-(\tilde{v})$.

\begin{remark}
\label{rem:holenvelope}
	By construction, $\Gamma_+(u)$ is the envelope of holomorphy of $\Gamma_+(g)$ and $\Gamma_-(v)$ is the envelope of holomorphy of $\Gamma_-(g)$.
\end{remark}

Let
\[
	E\coloneqq\Gamma(u)\cap\Gamma(v)\subset\Gamma(g)
\]
By~\cite[Corollary to Theorem~2]{chirka:trepreau_theorem}, $E$ is foliated by $n$-dimensional complex manifolds.
\begin{remark}
	The set $E$ was first introduced by Trépreau in the $\mathscr{C}^2$ setting (see~\cite[Theorem~1.1]{trepreau:prolongement}).
\end{remark}

\begin{lemma}
\label{lemma:contained}
	Let $Z_0\subset\Gamma(g)$ be an $n$-pseudoconcave subset. Then $Z_0$ is contained in $E$. 
\end{lemma}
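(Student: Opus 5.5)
We must show that every point of $Z_0$ satisfies $u = g = v$. Since $u\leq g\leq v$, it suffices to show that if $p=(z_0,w_0)\in Z_0$ then $u(z_0,\Re w_0)=g(z_0,\Re w_0)$, and symmetrically for $v$. We argue by contradiction, using that the complement of an $n$-pseudoconcave set in $\C^{n+1}$ is $0$-pseudoconvex, i.e.\ pseudoconvex in the usual sense. So $U\coloneqq(D\times i\R)\smallsetminus Z_0$ is locally pseudoconvex near $Z_0$, and in particular it is locally a domain of holomorphy near each boundary point.

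Suppose $u(x_0)<g(x_0)$, where $x_0=(z_0,\Re w_0)$ and $p\in Z_0$. Choose a small ball $B$ around $p$. By upper semicontinuity of $u$ and continuity of $g$, there is a neighbourhood $W$ of $x_0$ and a constant $c$ with $u<c<g$ on $W$. Then the lower side $\{\Im w<g\}$ over $W$ is a region into which every holomorphic function on $\Gamma_+(g)$ extends across $\Gamma(g)$, by Remark~\ref{rem:holenvelope}: $\Gamma_+(u)$ is the envelope of holomorphy of $\Gamma_+(g)$.

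The key step is to turn this into a contradiction with the local pseudoconvexity of $U$ at $p$. Near $p$, the set $U$ contains $\Gamma_+(g)\cap B$ and $\Gamma_-(g)\cap B$, and it misses $Z_0$. We use $-\log\dist(\cdot,Z_0)$, which is plurisubharmonic on $U$ near $Z_0$. One option is to take the restriction of this function to $\Gamma_+(g)\cap B$ and then apply the Kontinuitätssatz along analytic discs. The extension of holomorphic functions from $\Gamma_+(g)$ to $\Gamma_+(u)$ is realised by the disc method: Chirka's and Trépreau's constructions give families of analytic discs with boundaries in $\Gamma_+(g)$ that sweep past points of $\Gamma(g)$ below. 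A cleaner route is through the envelope itself. Since $U$ is pseudoconvex near $p$, a plurisubharmonic function blowing up at $Z_0$ would have to extend as a plurisubharmonic function to the envelope of holomorphy of $\Gamma_+(g)\cap B'$ for a suitable localized domain. Plurisubharmonic functions extend to envelopes of holomorphy, since those coincide with the pseudoconvex hulls. That envelope contains the point $p$, because $u(x_0)<g(x_0)$. But $\phi=-\log\dist(\cdot,Z_0)$ tends to $+\infty$ at $p$, which is a contradiction.

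The main obstacle is localization. $u$ is defined globally on $D$, while pseudoconvexity of $U$ holds only near $Z_0$, and $U$ may be far from pseudoconvex away from $Z_0$. I would handle this in two steps. First, replace $\phi$ by $\max(\phi,\psi)$ with a suitable strictly plurisubharmonic $\psi$ large near $bB$, which produces a plurisubharmonic function on all of $\Gamma_+(g)\cap B$. Second, use that $u$ is also obtained by the local version of Trépreau's construction; equivalently, $\Gamma_+(u)$ is the pseudoconvex hull, so its plurisubharmonic-convexity characterizes $u$ as the infimum of plurisubharmonic-extension graphs. Granting this local characterization, the argument for $v$ is symmetric, and together they give $Z_0\subset\Gamma(u)\cap\Gamma(v)=E$.
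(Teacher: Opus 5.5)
There is a genuine gap, and it sits exactly in the step you call the main obstacle and then grant. Your contradiction needs the envelope of holomorphy of a localized piece $\Gamma_+(g)\cap B'$ to contain $p$, where $B'$ is a small ball around $p$ (that is where your $\max(\phi,\psi)$ construction lives). You infer this from $u(x_0)<g(x_0)$, and that inference is false. The function $u$ is defined by extension from all of $\Gamma_+(g)$ over $D$, so the strict inequality at $x_0$ can come entirely from the shape of $\Gamma(g)$ far from $p$.

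For example, take $n=1$ and $g(z,s)=\chi(|z|)$ with $\chi\leq0$, $\chi\equiv0$ on $[0,1]$ and $\chi<-\epsilon$ near $3/2$. The discs $\{(z,s+it) : |z|\leq 3/2\}$ lie in $\Gamma_+(g)$ for $t>0$, and their boundaries lie in $\Gamma_+(g)$ for every $t>-\epsilon$. The Kontinuit\"atssatz then gives $u(0,s)\leq-\epsilon<0=g(0,s)$. Yet near $p=(0,s)$ the graph is the Levi-flat hyperplane $\{\Im w=0\}$. So for every small ball $B'$ centred at $p$, the set $\Gamma_+(g)\cap B'$ is a convex half-ball, which is its own envelope and does not contain $p$. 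The ``local version of Tr\'epreau's construction'' you want to grant therefore does not exist.

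The missing idea is to globalize the pseudoconvexity instead of localizing $u$. By Remark~\ref{rem:holenvelope}, $\Gamma_+(u)$ is itself pseudoconvex, and $(D\times i\R)\smallsetminus Z_0$ is pseudoconvex near $Z_0$. Since pseudoconvexity is local at boundary points, $\Omega\coloneqq\Gamma_+(u)\smallsetminus Z_0$ is pseudoconvex. Its connected component $W$ containing the connected set $\Gamma_+(g)$ is then a domain of holomorphy. Every point $p\in Z_0\cap\Gamma_+(u)$ is a limit of points of $\Gamma_+(g)$, so $p\in bW$. Choose $F\in\mathcal{O}(W)$ that does not extend across $p$. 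The extension $H$ of $F|_{\Gamma_+(g)}$ to the envelope $\Gamma_+(u)$ coincides with $F$ on $W$ by the identity theorem, which is a contradiction. This is the paper's argument.

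Your plurisubharmonic idea also works in this global form. The function $-\log\dist(\cdot,b\Omega)$ is plurisubharmonic on $\Omega\supset\Gamma_+(g)$, so its restriction to $\Gamma_+(g)$ extends plurisubharmonically to $\Gamma_+(u)\ni p$. That is impossible, since it tends to $+\infty$ at $p$ along $\Gamma_+(g)$; no uniqueness of the extension is needed. Combine this with the trivial fact $Z_0\cap\Gamma_-(u)\subset\Gamma(g)\cap\Gamma_-(g)=\emptyset$ and the symmetric argument for $v$, and you get $Z_0\subset E$.
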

\begin{proof}
	To prove this, we need to show that $Z_0$ is contained in both the graphs of $u$ and $v$. For this purpose, we will show that $Z_0$ is disjoint from both the epigraph and the hypograph of these functions.
	
	First of all, let us prove that $Z_0$ is disjoint from $\Gamma_-(u)$. Since $Z_0\subset\Gamma(g)$, we have that $Z_0\cap\Gamma_-(g)=\emptyset$. On the other hand, $\Gamma_-(u)\subset\Gamma_-(g)$, since $u\leq g$. Hence $Z_0$ is disjoint from $\Gamma_-(u)$.
	
	Now let us prove that $Z_0$ is disjoint from $\Gamma_+(u)$. By contradiction, suppose that $Z_0\cap\Gamma_+(u)\neq\emptyset$. Let $W$ be the connected component of $\Gamma_+(u)\smallsetminus Z_0$ containing $\Gamma_+(g)$ and let $p\in Z_0\cap\Gamma_+(u)$ be a point belonging to the boundary of $W$. Since $Z_0$ is $n$-pseudoconcave, then $\Gamma_+(u)\smallsetminus Z_0$ is pseudoconvex and thus $W$ is a domain of holomorphy. Hence there exists a holomorphic function $F:W\to\C$ which cannot be extended to any neighbourhood of $p$. However, $F|_{\Gamma_+(g)}$ can be extended to some holomorphic function $H:\Gamma_+(u)\to\C$, by Remark~\ref{rem:holenvelope}. Since $\Gamma_+(g)\subset W$ is open, then $F=H|_W$, i.e., $H$ extends $F$ to a neighbourhood of $p$. This is a contradiction. Therefore $Z_0$ is disjoint from $\Gamma_+(u)$.
	
	Therefore $Z_0$ is contained in $\Gamma(u)$. Analogously, $Z_0$ is contained in $\Gamma(v)$. Thus
	\[
		Z_0\subset\Gamma(u)\cap\Gamma(v)=E \qedhere
	\]
\end{proof}

\begin{lemma}
	Let $\mathcal{F}$ be the foliation of $E$ by complex hypersurfaces. Then $Z_0$ is union of some leaves of $\mathcal{F}$.
\end{lemma}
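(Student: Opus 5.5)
Fix $p\in Z_0$ and let $L$ be the leaf of $\mathcal{F}$ through $p$; by Lemma~\ref{lemma:contained} such a leaf exists. The goal is to show $L\subset Z_0$. Since $L$ is a connected complex manifold, it suffices to show that $L\cap Z_0$ is open and closed in $L$. Closedness is immediate because $Z_0$ is closed. The whole argument therefore reduces to openness: every point $p\in Z_0\cap L$ has a neighbourhood in $L$ contained in $Z_0$.

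For openness we work locally near $p$. Chirka's construction gives the local structure of $E$ as a graph. Near $p$, the leaf $L$ is a complex hypersurface which is the graph over an open set of $\C^n_z$ of a holomorphic function $w=h(z)$. Moreover, $E$ near $p$ lies between $\Gamma(u)$ and $\Gamma(v)$ with $u=v$ there. Now assume, for contradiction, that there is a point $p'\in L$ arbitrarily close to $p$ with $p'\notin Z_0$. Since $Z_0\subset\Gamma(g)$, a small ball $B$ around $p$ is cut by $\Gamma(g)$ into two pieces, $\Gamma_+(g)\cap B$ and $\Gamma_-(g)\cap B$. Hence $B\smallsetminus Z_0$ is connected: it contains both pieces and is joined through $p'$.

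Next we use $n$-pseudoconcavity, i.e.\ the pseudoconvexity of $(D\times i\R)\smallsetminus Z_0$. It implies that $B\smallsetminus Z_0$ is locally pseudoconvex, hence a domain of holomorphy. We then separate the two sides holomorphically: the function $F$ equal to $0$ on the upper side and $1$ on the lower side is not available directly, so instead we use $1/(w-h(z))$, suitably modified. Concretely, take a point $p\in Z_0\cap L$ lying on the boundary of $L\cap Z_0$ relative to $L$. Pick a holomorphic $F$ on the domain of holomorphy $W:=$ the component of $B\smallsetminus Z_0$, singular at $p$. Since $B\cap\Gamma_+(g)$ has envelope containing $B\cap\Gamma_+(u)$, and symmetrically for the lower side, $F$ extends holomorphically across $\Gamma_\pm$ off $E$. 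Near $p$, the set $E$ is the thin hypersurface $L$ itself, because $u=v$ forces the leaf to fill $E$ locally. So $F$ extends to $B\smallsetminus L$ from both sides, and it also extends across the point $p'\in L\smallsetminus Z_0$.

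The main obstacle is this last step: upgrading "extends from both sides and across one point of $L$" to "extends across all of $L$ near $p$". I would handle it with a Hartogs/Kontinuitätssatz argument along the leaf. Slice by complex lines transversal to $L$, parametrised by $z$, so that each slice meets $L$ in a single point $h(z)$. On each slice, $F$ is holomorphic on a punctured disc. These punctured functions depend holomorphically on $z$, and for $z$ near the projection of $p'$ they extend across the puncture. Riemann extension in the fibre, combined with the identity theorem in $z$ (via the Laurent coefficients, which are holomorphic in $z$ and vanish on an open set), shows that the negative Laurent coefficients vanish for all $z$. Hence $F$ extends holomorphically to a neighbourhood of $p$, contradicting the choice of $F$.

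This proves that $L\cap Z_0$ is open, hence equal to $L$. Therefore $Z_0$ is the union of the leaves meeting it.
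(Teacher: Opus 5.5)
\textbf{There is a genuine gap in the openness step.} Your open--closed framework is the same as the paper's. The openness argument, however, rests on the claim that near $p$ the set $E$ is just the hypersurface $L$, ``because $u=v$ forces the leaf to fill $E$ locally''. This is false. The equality $u=v=g$ holds only over the projection of $E$, not on a neighbourhood of it. Moreover $E$, and more importantly $Z_0$ itself, may contain a continuum of leaves accumulating on $L$.

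For instance, take $g\equiv 0$. Then $u=v=0$ and $E=\C^n\times\R$ is foliated by the hyperplanes $\C^n\times\{t\}$. The set $\C^n\times K$, with $K\subset\R$ a Cantor set, is an $n$-pseudoconcave subset of $\Gamma(g)$, because its complement $\C^n\times(\C\smallsetminus K)$ is pseudoconvex. So the hypotheses allow $Z_0$ to have leaves accumulating on $L$ near $p$, and your proof by contradiction has to handle that situation. There, a transversal slice $\{z\}\times\C_w$ meets $Z_0$ in an uncountable closed set accumulating at $h(z)$. Hence $F$ is not holomorphic on a punctured disc in the slice, there is no Laurent expansion, and the identity-theorem argument on the coefficients $a_{-k}(z)$ cannot start. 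That argument works only when $Z_0\cap B\subset L$, which is exactly what you cannot assume.

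The envelope step does not repair this. First, the envelope of the local piece $B\cap\Gamma_+(g)$ need not contain $B\cap\Gamma_+(u)$, since $u$ is defined by extending functions holomorphic on all of $\Gamma_+(g)$ over $D$. Second, and more to the point, the step is vacuous: $Z_0\subset E$ gives $\Gamma(g)\smallsetminus E\subset B\smallsetminus Z_0$, where $F$ is already defined. The only obstruction is $Z_0$ itself, and you have no control over how $Z_0\smallsetminus L$ approaches $L$.

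\textbf{The missing idea.} Do not try to remove $Z_0$ by extension; instead push $L$ off $Z_0$. The vertical translates $L+(0,is)$, $s>0$, lie in $\Gamma(g+s)\subset\Gamma_+(g)$. They are therefore contained in the pseudoconvex set $U=(D\times i\R)\smallsetminus Z_0$, however the other leaves of $E$ or $Z_0$ accumulate.

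This is what the paper does. It takes analytic discs $\Delta_\lambda\subset L$ through $p$, namely graphs of the defining function of $L$ over discs in complex lines through the base point. It translates them to $\Delta_\lambda^s=\Delta_\lambda+(0,is)$, whose closures lie in $U$. It then lets $s\to 0^+$ and applies the continuity principle in $U$ to conclude $\overline{\Delta_\lambda}\subset Z_0$. The union of these discs over all directions $\lambda$ is a neighbourhood of $p$ in $L$, which gives openness.
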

\begin{proof}
	Let $\ell$ be a leaf of $\mathcal{F}$ which intersects $Z_0$ and let $(z_0,w_0)\in\ell\cap Z_0$. It is sufficient to show that there exists a neighbourhood of $(z_0,w_0)$ in $\ell$ which is all contained in $Z_0$. Note that, by~\cite[Theorem~2]{chirka:trepreau_theorem}, 
	\begin{equation}
	\label{eq:graph_form}
		\ell=\Gamma(f)=\{(z,f(z))\in\C^{n+1} \mid z\in G\}
	\end{equation}
	where $f:G\to\C$ is a holomorphic function and $G\subset\C^n$ is a contractible domain of holomorphy. Since $(z_0,w_0)\in\ell$, then $z_0\in G$ and $f(z_0)=w_0$. On the other hand, since $Z_0\subset V\coloneqq D\times i\R$ is $n$-pseudoconcave, then $U\coloneqq V\smallsetminus Z_0$ is pseudoconvex.

	Let $\lambda\subset\C^n$ be a 1-dimensional affine complex subspace passing through $z_0$. Then $(z_0,w_0)\in(\lambda\times\C)\cap E$. Let
	\[
		W=\{z\in\C^n \mid |z-z_0|<r\}
	\]
	be an open ball centred at $z_0$ of radius $r>0$, such that $\overline{W}\subset G$. Let $W_\lambda\coloneqq W\cap\lambda$, which is a 1-dimensional disc centred at $z_0$, and set $\Delta_\lambda\coloneqq\Gamma(f|_{W_\lambda})$. Then $\Delta_\lambda$ is an analytic disc in $\C^{n+1}$. By~\eqref{eq:graph_form}, we have that
	\[
		\ell\supset\bigcup_{\lambda\ni z_0} \Delta_\lambda=\Gamma(f|_W)\eqqcolon \Delta
	\]
	Note that $\Delta$ is a neighbourhood of $(z_0,w_0)$ in $\ell$. Now, for $s\in(0,S]$ let
	\[
		\Delta_\lambda^s\coloneqq\Delta_\lambda+(\vec{0},is)=\{(z,f(z)+is)\in\C^{n+1} \mid z\in W_\lambda\}
	\]
	Since $\overline{\Delta_\lambda^s}\subset\Gamma(g+s)$ and $Z_0\subset\Gamma(g)$, then $\overline{\Delta_\lambda^s}\cap Z_0=\emptyset$ and thus $\overline{\Delta_\lambda^s}\subset U$. Note that $\overline{\Delta_\lambda^s}\to\overline{\Delta_\lambda}$ and $b\Delta_\lambda^s\to b\Delta_\lambda$ for $s\to0^+$, in the Hausdorff metric. If $\overline{\Delta_\lambda}\cap U\neq\emptyset$, then $\overline{\Delta_\lambda}\subset U$, by the ``strong continuity principle'' (see~\cite[Section~17.3]{vladimirov:methods}). This is a contradiction, since $(z_0,w_0)\in \Delta_\lambda$. Thus $\overline{\Delta_\lambda}\subset Z_0$.

	By the arbitrariness of $\lambda$, we obtain that $\Delta\subset Z_0$. Note that $\Delta$ is a neighbourhood of $(z_0,w_0)$ and thus $\ell\cap Z_0$ is open in $\ell$. On the other hand, $\ell\cap Z_0$ is clearly closed in $\ell$, since $Z_0$ is closed. Hence $\ell\subset Z_0$, since $\ell$ is connected. Therefore $Z_0$ is union of leaves of $\mathcal{F}$.
\end{proof}

\begin{remark}
	The foliation obtained on $Z_0$ is unique. Namely, consider the foliation $\{\ell_\alpha\}$ constructed above and suppose that there exists another foliation $\{\ell'_\beta\}$ by $n$-dimensional complex manifolds on $Z_0$. Note that a leaf $\ell_\alpha$ is a connected $n$-dimensional complex analytic set in $Z_0\subset E\subset\Gamma(g)$, which is closed in $D\times i\R$, by~\cite[Theorem~2]{chirka:trepreau_theorem}. Take a leaf $\ell'_\beta$ of the second foliation such that $\ell_\alpha\cap\ell'_\beta\neq\emptyset$. Since $\ell'_\beta$ is a connected complex analytic set, then $\ell'_\beta\subset\ell_\alpha$, by~\cite[Lemma~2]{chirka:trepreau_theorem}. Therefore we have the uniqueness of the foliation.
\end{remark}

We have thus proved Theorem~\ref{thm:foliation}. 

Furthermore, we can see $Z_0$ not only as a closed subset of a graph, but as a graph itself.
\begin{corollary}
\label{coro:foliation}
	For $n\geq1$, let $X_0\subset\C^n\times\R$ be a closed subset. Let $g_0:X_0\to\R$ be a continuous function and let $Z_0\coloneqq\Gamma(g_0)$ be its graph. If $Z_0$ is an $n$-pseudoconcave subset of $\C^{n+1}$, then there exists a unique foliation on it given by $n$-dimensional complex manifolds.
\end{corollary}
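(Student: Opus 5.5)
The plan is to reduce Corollary~\ref{coro:foliation} to Theorem~\ref{thm:foliation} by extending $g_0$ to a continuous function on all of $\C^n\times\R$. Since $X_0\subset\C^n\times\R\cong\R^{2n+1}$ is closed and $g_0:X_0\to\R$ is continuous, the Tietze extension theorem gives a continuous $g:\C^n\times\R\to\R$ with $g|_{X_0}=g_0$. Take $D\coloneqq\C^n\times\R$, which is a domain, so that $D\times i\R=\C^{n+1}$. By construction
\[
	Z_0=\Gamma(g_0)=\bigl\{(z,w)\in\C^{n+1} \mid (z,\Re(w))\in X_0,\ \Im(w)=g\bigl(z,\Re(w)\bigr)\bigr\}\subset\Gamma(g).
\]

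Next I check the hypotheses of Theorem~\ref{thm:foliation}. First, $Z_0$ is closed in $\C^{n+1}$: it is the intersection of the closed set $\Gamma(g)$ (the graph of a continuous function on all of $\R^{2n+1}$) with the closed set $\{(z,w) \mid (z,\Re(w))\in X_0\}$. Second, by hypothesis $Z_0$ is an $n$-pseudoconcave subset of $\C^{n+1}=D\times i\R$. This is exactly the meaning of ``$Z_0\subset\Gamma(g)$ is an $n$-pseudoconcave subset'' fixed in the remark after Theorem~\ref{thm:foliation}, since the ambient open set $V$ in Definition~\ref{def:pseudoconcave} is the same. Theorem~\ref{thm:foliation} then gives a foliation of $Z_0$ by $n$-dimensional complex manifolds.

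Uniqueness is the step that needs some care, because the foliation was built from the leaves of Chirka's set $E$, and $E$ depends on the chosen extension $g$. However, the uniqueness remark after Theorem~\ref{thm:foliation} only uses intrinsic facts. Any leaf $\ell_\alpha$ is a connected closed complex analytic set contained in $Z_0\subset\Gamma(g)$. Any connected complex analytic set $\ell'_\beta\subset\Gamma(g)$ meeting $\ell_\alpha$ must lie in $\ell_\alpha$, by~\cite[Lemma~2]{chirka:trepreau_theorem}. So any two foliations of $Z_0$ by $n$-dimensional complex manifolds coincide, whatever extension was used. I do not expect a real obstacle here; the point to verify is just that Tietze applies, and it does because $X_0$ is closed in a metric space. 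Global closedness is what makes the extension possible with $D$ equal to the whole space.
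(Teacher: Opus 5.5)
Your proposal is correct and follows the paper's own proof: extend $g_0$ by Tietze to all of $\C^n\times\R$, then apply Theorem~\ref{thm:foliation} with $D=\C^n\times\R$, so that $D\times i\R=\C^{n+1}$. Your extra checks are sound elaborations of points the paper leaves implicit: that $Z_0$ is closed, and that the uniqueness argument (via Chirka's Lemma~2) does not depend on the chosen extension.
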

\begin{proof}
	By Tietze Extension Theorem, there exists a continuous function $g:\C^n\times\R\to\R$ which extends $g_0$. Then $Z_0$ is an $n$-pseudoconcave subset of $\Gamma(g)$. Therefore, by Theorem~\ref{thm:foliation}, we conclude.
\end{proof}

Finally, since the result we have proven is local, we can also ask $Z_0$ to be just locally a continuous graph, instead of globally.
\begin{corollary}
\label{coro:foliation2}
	Let $Z\subset\C^{n+1}$ be an $n$-pseudoconcave subset, which is locally the graph of a continuous function over a closed subset of $\C^n\times\R$. Then there exists a unique foliation on $Z$ given by complex hypersurfaces.
\end{corollary}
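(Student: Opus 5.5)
The plan is to localise, apply Corollary~\ref{coro:foliation} on each local piece, and then glue the local foliations using uniqueness. Fix $p\in Z$. By hypothesis there is an open neighbourhood $V_p\subset\C^{n+1}$ of $p$, a closed subset $X_p\subset\C^n\times\R$ (after a complex-affine change of coordinates if the local graph representation is given in rotated coordinates) and a continuous $g_p:X_p\to\R$ with $Z\cap V_p=\Gamma(g_p)\cap V_p$. Pseudoconcavity in the sense of Definition~\ref{def:pseudoconcave} is local: $(n-q-1)$-pseudoconvexity of $V\smallsetminus Z$ is a condition on $-\log\dist(\cdot,bU)$ near the boundary, and the distance to the boundary agrees locally with the distance to $Z$. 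Hence $Z\cap V_p$ is an $n$-pseudoconcave subset of $V_p$.

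To apply Corollary~\ref{coro:foliation}, which is stated for graphs that are closed in $\C^{n+1}$, I would shrink $V_p$ to a product $B\times I\times i\R$ with $B\subset\C^n$ a ball and $I\subset\R$ an interval, so that $V_p=D\times i\R$ with $D=B\times I$ a domain. By Tietze, extend $g_p|_{X_p\cap D}$ to a continuous $g:D\to\R$. Then $Z\cap V_p$ is an $n$-pseudoconcave subset of $\Gamma(g)$ in $D\times i\R$, exactly as in the proof of Corollary~\ref{coro:foliation}. Theorem~\ref{thm:foliation} therefore gives a unique foliation $\mathcal F_p$ of $Z\cap V_p$ by complex hypersurfaces.

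For the gluing, take two charts $V_p$ and $V_{p'}$. On $V_p\cap V_{p'}$ (shrunk to a product domain containing a given point, if necessary), the restrictions of the leaves of $\mathcal F_p$ and of $\mathcal F_{p'}$ are both foliations of the same $n$-pseudoconcave subset of a continuous graph by $n$-dimensional complex manifolds. By the uniqueness remark, whose argument uses Chirka's Lemma~2, a connected complex hypersurface contained in the set lies inside a leaf. So the connected components of leaf pieces coincide locally. We then define an equivalence relation on $Z$: $x\sim y$ if they can be joined by a chain of local plaques. The equivalence classes are the global leaves; each is an injectively immersed complex hypersurface, locally embedded because locally it coincides with a leaf of some $\mathcal F_p$. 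Uniqueness of the global foliation follows from the local uniqueness.

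The main obstacle is this gluing step. It requires checking that local leaves restricted to smaller domains are still the leaves of the unique foliation there (a leaf of $\mathcal F_p$ intersected with a smaller product domain may split into several components), and that the global leaves are embedded submanifolds rather than merely immersed ones. For the first point I would use the local characterisation that the plaque through $x$ is the connected component through $x$ of the union of complex hypersurfaces through $x$ contained in $Z$; this is intrinsic and independent of the chart. For embeddedness I would interpret Definition~\ref{def:foliated} in the same local sense as the paper, namely via graphs over $G\subset\C^n$ as in \eqref{eq:graph_form}.
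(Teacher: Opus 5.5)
Your proposal is correct and follows the paper's route. The paper proves this corollary only by observing that Theorem~\ref{thm:foliation} and Corollary~\ref{coro:foliation} are local, and your localisation via Tietze plus gluing through the uniqueness argument (Chirka's Lemma~2) spells that observation out, including the embeddedness caveat, which the paper does not discuss either. One small fix: $V_p$ cannot literally be shrunk to the unbounded cylinder $B\times I\times i\R$; instead regard the graph piece $\Gamma(g_p|_{X_p\cap D})$, for a small domain $D$, as a closed subset of $D\times i\R$, which coincides with $Z$ near each of its points and is therefore still $n$-pseudoconcave in $D\times i\R$.
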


\section{Some properties of \texorpdfstring{$q$}{q}-local maximum sets}
\label{sec:locmax_set}
In this section, we prove some properties regarding $q$-local maximum sets, which will be fundamental in Section~\ref{sec:second_gen}.

First of all, let us recall their definition.
\begin{definition}
\label{def:locmaxset}
	Let $Z\subset\C^n$ be a locally closed subset. We say that $Z$ is a \emph{$q$-local maximum set} (for $0\leq q\leq n-1$) if for every $z\in Z$ there exists an open neighbourhood $U\subset\C^n$ of $z$ such that for every compact subset $K\subset U$, with $Z\cap K\neq\emptyset$ compact, and for every $q$-plurisubharmonic function $\phi:U\to\R$ we have that
	\[
		\max_{Z\cap K}\phi=\max_{Z\cap bK}\phi
	\]
	where $bK$ denotes the topological boundary of $K$.
\end{definition}

In~\cite[Theorem~5.1]{slodkowski:loc_max_property}, a characterisation of $q$-local maximum sets is given. We are interested, in particular, in property~(iii): $Z\subset\C^n$ is a $q$-local maximum set if and only if it does not exist a $q$-plurisubharmonic function $\rho:Z\to\R$ such that $\rho(z_0)=0$ and $\rho<0$ on $Z\smallsetminus\{z_0\}$, for some $z_0\in Z$. Note that the function $\rho$ is, a priori, only upper semicontinuous. We show that, at least in a neighbourhood of a point, this function can be taken continuous.
\begin{lemma}
\label{lemma:lms_basis'}
	Let $Z\subset\C^n$ be a closed subset. If $Z$ is not a $q$-local maximum set, then there exists a point $z_0\in Z$ such that: for every neighbourhood $V\subset\C^n$ of $z_0$ there exists a continuous $q$-plurisubharmonic function $\rho:U\to\R$, where $U\subset\C^n$ is a neighbourhood of $Z\cap V$, and a point $w_0\in Z\cap V$ such that
	\begin{gather*}
		\rho(w_0)=0 \\
		\rho(w)<0 \quad \forall w\in (Z\cap V)\smallsetminus\{w_0\}
	\end{gather*}
\end{lemma}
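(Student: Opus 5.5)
The plan is to start from Słodkowski's characterisation \cite[Theorem~5.1]{slodkowski:loc_max_property}, property~(iii). Since $Z$ is not a $q$-local maximum set, it provides a point $z_0\in Z$, an open neighbourhood $U_0$ of $z_0$, and an upper semicontinuous $q$-plurisubharmonic function $\rho_0$ on $U_0$ (or at least near $Z\cap U_0$) such that
\[
\rho_0(z_0)=0, \qquad \rho_0<0 \text{ on } (Z\cap U_0)\smallsetminus\{z_0\}.
\]
Fix any neighbourhood $V$ of $z_0$; shrinking it, assume $\overline V\subset U_0$ is a small closed ball. Two things have to be produced: continuity of the function, and a strict maximum point $w_0$ on $Z\cap V$. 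The point $w_0$ need not equal $z_0$, and this freedom is what makes the argument work.

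\textbf{Step 1 (approximation).} Regularise $\rho_0$ by standard convolution, $\rho_\epsilon\coloneqq\rho_0*\chi_\epsilon$. Convolution preserves $q$-plurisubharmonicity for Słodkowski's class, since that class is invariant under translations and closed under positive averages (integrals) of translates. The $\rho_\epsilon$ are smooth, defined on a slightly smaller neighbourhood $U$ of $\overline V$, and decrease to $\rho_0$ as $\epsilon\to0$. Add the small strictly plurisubharmonic term $\delta|z-z_0|^2$ with a minus sign to keep the maximum localised. Set $\psi_\epsilon\coloneqq\rho_\epsilon-\delta|z-z_0|^2$; this is still $q$-plurisubharmonic, because subtracting a smooth function with small Hessian from a function that is \emph{strictly} $q$-psh is fine. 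So first replace $\rho_0$ by $\rho_0+\eta|z|^2$, which keeps it strictly $q$-psh, and choose $\delta<\eta$.

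\textbf{Step 2 (localising the maximum).} On the compact set $Z\cap\overline V$, the usc function $\rho_0-\delta|z-z_0|^2$ is at most $0$, equals $0$ only at $z_0$, and is bounded by some $-c<0$ on $Z\cap bV$. By Dini-type upper semicontinuity arguments (a decreasing sequence of continuous functions converging to a usc limit converges uniformly from above on compacts in the sense that $\limsup\max$ is controlled), for $\epsilon$ small we get
\[
\max_{Z\cap bV}\psi_\epsilon<-c/2 \quad\text{and}\quad \psi_\epsilon(z_0)\geq\rho_0(z_0)=0.
\]
Hence $\psi_\epsilon$ attains its maximum $m\ge0$ over $Z\cap\overline V$ in the interior $Z\cap V$.

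\textbf{Step 3 (strict maximum).} The maximum set may not be a single point. Pick a maximiser $w_0$ and set
\[
\rho\coloneqq\psi_\epsilon-m-\delta'|z-w_0|^2,
\]
with $\delta'$ small so that $q$-plurisubharmonicity is preserved via the strictness margin. This function is continuous, $q$-psh on $U$, vanishes at $w_0$, and is strictly negative elsewhere on $Z\cap\overline V\supset Z\cap V$.

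The main obstacle is the regularisation step. One must check that convolution, together with the perturbations by $\pm|z|^2$, stays within Słodkowski's class of $q$-plurisubharmonic functions, which are only usc and are defined via local maximum behaviour on $(q+1)$-planes. The other delicate point is obtaining the uniform bound on $Z\cap bV$ from monotone convergence. If convolution is problematic for that class, I would instead use Słodkowski's approximation theorem for $q$-psh functions by continuous ones \cite{slodkowski:loc_max_property}, which gives a decreasing continuous approximation on neighbourhoods of compacts, and run Steps~2--3 unchanged.
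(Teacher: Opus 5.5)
There is a genuine gap in Step~1, and it goes beyond a missing justification. For $q\geq1$ the $q$-plurisubharmonic functions do not form a convex cone. In $\C^2$, both $-2|z_1|^2+|z_2|^2$ and $|z_1|^2-2|z_2|^2$ are $1$-plurisubharmonic, but their sum is not. So averages of translates need not stay in the class. Worse, these functions satisfy no sub-mean value inequality, and mollification cannot see data carried by a Lebesgue-null set such as $Z$. For example, the function equal to $0$ on the complex line $\{z_2=0\}\subset\C^2$ and to $-1$ elsewhere is $1$-plurisubharmonic, yet every mollification of it is identically $-1$. Hence neither $\rho_\epsilon\downarrow\rho_0$ nor $\psi_\epsilon(z_0)\geq\rho_0(z_0)$ is available, and Step~2 falls with Step~1.

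Your fallback is where all the content lies. What you need is that an upper semicontinuous $q$-plurisubharmonic function is, near a compact set, the decreasing limit of \emph{continuous} $q$-plurisubharmonic functions. This is true: the sup-convolutions $\sup_y\{u(y)-|x-y|^2/(2\epsilon)\}$ of the truncations $\max\{u,-k\}$ are suprema of translates minus constants, semiconvex hence continuous, and decrease to $u$. But it has to be stated and proved or cited precisely, not invoked as an unnamed approximation theorem. With it, Steps~2--3 go through, provided you add $\eta|z-z_0|^2$ \emph{after} approximating, so that the margin used in Step~3 is not lost.

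There is a second, independent gap. The statement is for \emph{every} neighbourhood $V$ of $z_0$: $\rho$ must be defined near all of $Z\cap V$ and be negative on all of $(Z\cap V)\smallsetminus\{w_0\}$. Replacing $V$ by a small ball $V'\subset U_0$ and stopping there proves a weaker statement. You must extend back, and this is exactly the device in the paper's proof. Pick $d<0$ with $\max_{Z\cap bV'}\rho<d$ and set $H=\{\rho<d\}$. Define the new function as $\max\{\rho,d\}$ on $V'$ and as $d$ on $H\cup(V\smallsetminus\overline{V'})$. It is continuous, $q$-plurisubharmonic, and defined on a neighbourhood of $Z\cap V$.

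Note also that the paper never regularises an upper semicontinuous peak function. It starts from the failure of the local maximum property tested with a \emph{smooth} $q$-plurisubharmonic $\phi$ on a compact $C\subset V$. It then uses S\l{}odkowski's Lemma~2.2 to add a smooth strictly plurisubharmonic $f$ so that $\phi+f$ has a strict maximum on $Z\cap C$ at an interior point. Since this is a plurisubharmonic perturbation, no strictness margin is needed. Continuity then comes for free, and the truncation by $d$ handles the extension to all of $Z\cap V$.
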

\begin{proof}
	Since $Z$ has not the $q$-local maximum property, then there exists a point $z_0\in Z$ such that for each neighbourhood $V\subset\C^n$ of $z_0$ there exists a compact subset $C\subset V$ and a smooth $q$-plurisubharmonic function $\phi:V\to\R$ such that
	\[
		\max_{Z\cap C}\phi>\max_{Z\cap bC}\phi
	\]
	Fix $V$ and take the respective $C$ and $\phi$.
	
	We construct the continuous function $\rho$ using the same strategy of~\cite[Theorem~5.1]{slodkowski:loc_max_property}. By~\cite[Lemma~2.2]{slodkowski:loc_max_property}, there exists a point $w_0\in Z\cap\mathring{C}$ and a smooth strictly plurisubharmonic function $f:\C^n\to\R$ such that
	\begin{gather*}
		(\phi+f)(w_0)=0 \\
		(\phi+f)(w)<0 \quad \forall w\in(Z\cap C)\smallsetminus\{w_0\}
	\end{gather*}
	Take $d<0$ such that
	\[
		\max_{Z\cap bC}(\phi+f)<d
	\]
	and let $H=\{z\in V \mid (\phi+f)(z)<d\}$. Then $H\subset V$ is open and $Z\cap bC\subset H$. Let $U=\mathring{C}\cup H\cup(V\smallsetminus C)$, then $U$ is an open neighbourhood of $Z\cap V$. Let us define
	\[
		\rho:U\to\R, \quad \rho(z)=
		\begin{cases}
			\max\{(\phi+f)(z),d\}, & z\in\mathring{C} \\
			d, & z\in H\cup(V\smallsetminus C)
		\end{cases}
	\]
	Since $\phi+f<d$ on $H$, then $\rho$ is well-defined. By construction, we have that $\rho$ is a continuous $q$-plurisubharmonic function and $w_0$ is a strict global maximum point for $\rho|_{Z\cap V}$.
\end{proof}

Now we prove a topological property of continuous functions.
\begin{lemma}
\label{lemma:bases}
	Let $X$ and $Y$ be topological spaces and let $g:X\to Y$ be a continuous function. Let $\mathcal{B}_X$ be a basis for the topology of $X$ and $\mathcal{B}_Y$ be a basis for the topology of $Y$. Then for every open subset $V\subset X\times Y$, which intersects the graph $\Gamma(g)$, and for every point $(x_0,g(x_0))\in V\cap\Gamma(g)$ there exist two basic open sets $B\in\mathcal{B}_X$ and $D\in\mathcal{B}_Y$ such that $(x_0,g(x_0))\in  B\times D\subset V$ and $g(\overline{B})\subset D$.
\end{lemma}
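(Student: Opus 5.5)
The plan is to first use the product topology to get a product neighbourhood inside $V$, then shrink the $X$-factor by continuity of $g$. The closure $\overline{B}$ will be handled with the inclusion $g(\overline{B})\subset\overline{g(B)}$, which holds for any continuous map.

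\emph{Step 1 (product neighbourhood).} Since $V$ is open in $X\times Y$ and contains $(x_0,g(x_0))$, the definition of the product topology gives open sets $B'\subset X$ and $D'\subset Y$ with $(x_0,g(x_0))\in B'\times D'\subset V$. Since $\mathcal{B}_Y$ is a basis, we may pick $D\in\mathcal{B}_Y$ with $g(x_0)\in D\subset D'$.

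\emph{Step 2 (shrinking in $Y$).} Next I would pick an open set $D''\subset Y$ with $g(x_0)\in D''$ and $\overline{D''}\subset D$. This needs $Y$ to be regular. In the setting where the lemma is used, $Y=\R\times\C^p$ (or a subset of it) is a metric space, so $D''$ can be taken to be a small ball around $g(x_0)$. I would state this regularity assumption explicitly, since the lemma is false for arbitrary non-regular $Y$.

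\emph{Step 3 (shrinking in $X$).} The set $B'\cap g^{-1}(D'')$ is an open neighbourhood of $x_0$ because $g$ is continuous. Since $\mathcal{B}_X$ is a basis, choose $B\in\mathcal{B}_X$ with $x_0\in B\subset B'\cap g^{-1}(D'')$.

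\emph{Step 4 (conclusion).} We have $B\times D\subset B'\times D'\subset V$ and $(x_0,g(x_0))\in B\times D$. Moreover $g(B)\subset D''$. By continuity of $g$,
\[
	g(\overline{B})\subset\overline{g(B)}\subset\overline{D''}\subset D .
\]

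The only non-formal point is Step 2, namely controlling the closure of $B$ rather than $B$ itself. It is settled by choosing $D''$ with $\overline{D''}\subset D$, which is exactly where regularity (here, metrisability) of $Y$ is used.
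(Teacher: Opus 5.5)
Your proof is correct, and it takes a genuinely different route from the paper's. The two arguments differ in where the closure $\overline{B}$ is controlled.

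\textbf{The paper's route.} It takes a basic $B\times D\subset V$ and shrinks on the $X$ side. It picks a basic $B_0$ with $x_0\in B_0$ and $\overline{B_0}\subset B\cap g^{-1}(D)$, so that $g(\overline{B_0})\subset D$ follows directly. That step is precisely regularity of $X$ at $x_0$, which the paper uses without stating it.

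\textbf{Your route.} You shrink on the $Y$ side instead, choosing $D''$ with $\overline{D''}\subset D$. You then push closures through $g$ via $g(\overline{B})\subset\overline{g(B)}$. This costs regularity of $Y$ rather than of $X$.

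\textbf{Comparison.} Each proof needs one, different, separation hypothesis, and the lemma holds as soon as either $X$ or $Y$ is regular. In the application (Lemma~\ref{lemma:lms_basis2}), both $X=U\subset\C^n$ and $Y=\C^p$ are metric, so either version suffices. You are right that some such hypothesis is needed: $X=Y$ non-regular at $x_0$, $g=\mathrm{id}$ and $V=O\times O$ (with $O$ witnessing non-regularity) is a counterexample. Making this explicit improves on the paper's formulation. Your remark that the lemma is ``false for non-regular $Y$'' slightly undersells the alternative, though, since regularity of $X$ alone also rescues the statement; that is exactly what the paper's argument tacitly relies on.
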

\begin{proof}
	Recall that $\mathcal{B}_X\times\mathcal{B}_Y$ is a basis for the topology of $X\times Y$. Hence, fixed $(x_0,g(x_0))\in V\cap\Gamma(g)$, there exists a basic open $B\times D\in\mathcal{B}_X\times\mathcal{B}_Y$ such that
	\[
		(x_0,g(x_0))\in B\times D\subset V
	\]
	Note that $B\cap g^{-1}(D)$ is an open neighbourhood of $x_0$ in $X$. Hence there exists a basic open set $B_0\in\mathcal{B}_X$ such that $x_0\in\overline{B_0}\subset B\cap g^{-1}(D)$. In particular, we have that
	\[
		g(\overline{B_0})\subset g(B\cap g^{-1}(D))\subset g(g^{-1}(D))\subset D
	\]
	Therefore, substituting $B$ with $B_0$, we conclude.
\end{proof}

Combining Lemma~\ref{lemma:lms_basis'} and Lemma~\ref{lemma:bases}, we obtain the following.
\begin{lemma}
\label{lemma:lms_basis2}
	Let $g:U\to\C^p$ be a continuous function, where $U\subset\C^n$ is an open subset. Let $\mathcal{B}$ be a basis for the Euclidean topology of $U$ and $\mathcal{B}'$ be a basis for the Euclidean topology of $\C^p$, both given by relatively compact open sets. If $\Gamma(g)\subset U\times\C^p$ is not a $q$-local maximum set, then there exist two basic open sets $B\in\mathcal{B}$ and $B'\in\mathcal{B}'$, and a continuous $q$-plurisubharmonic function $\rho:V\to\R$, where $V\subset U\times\C^p$ is a neighbourhood of $K\coloneqq\overline{B\times B'}$, such that
	\begin{align*}
		\max_{\Gamma(g)\cap K} \rho &> \max_{\Gamma(g)\cap bK} \rho \\
		g(\overline{B}) &\subset B'
	\end{align*}
\end{lemma}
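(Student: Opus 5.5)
Apply Lemma~\ref{lemma:lms_basis'} to $Z\coloneqq\Gamma(g)$ to get the point and the continuous $q$-plurisubharmonic function. Then shrink the neighbourhood with Lemma~\ref{lemma:bases} so that it becomes a basic product box over which the graph stays inside. One preliminary issue: $\Gamma(g)$ is closed in $U\times\C^p$, not in $\C^{n+p}$. So I work with Lemma~\ref{lemma:lms_basis'} relative to the open set $U\times\C^p$. Its proof is purely local and only uses closedness near $z_0$, so it goes through unchanged there.

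The steps, in order:

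\textbf{Step 1.} By Lemma~\ref{lemma:lms_basis'}, there is a point $z_0=(x_0,g(x_0))\in\Gamma(g)$ with the following property. For every neighbourhood $V_0$ of $z_0$ there are a continuous $q$-plurisubharmonic $\rho$ on a neighbourhood of $\Gamma(g)\cap V_0$ and a point $w_0\in\Gamma(g)\cap V_0$ with $\rho(w_0)=0$ and $\rho<0$ on $(\Gamma(g)\cap V_0)\smallsetminus\{w_0\}$.

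\textbf{Step 2.} Take $V_0$ to be a box. Apply Lemma~\ref{lemma:bases} with $X=U$, $Y=\C^p$, the bases $\mathcal{B},\mathcal{B}'$ and the open set $U\times\C^p$ (or any small ball around $z_0$). This gives $B\in\mathcal{B}$ and $B'\in\mathcal{B}'$ with $z_0\in B\times B'$ and $g(\overline{B})\subset B'$. Since $B$ and $B'$ are relatively compact and $\overline B\subset U$ can be arranged by choosing $B$ small, the set $K=\overline{B\times B'}=\overline B\times\overline{B'}$ is a compact subset of $U\times\C^p$. Moreover $\Gamma(g)\cap K=\Gamma(g|_{\overline B})$, because $g(\overline B)\subset B'$.

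\textbf{Step 3.} Apply Step 1 with $V_0$ a slightly larger open box $B_1\times B_1'\supset K$. I would use an open neighbourhood of $K$ such that $\Gamma(g)\cap V_0$ deviates from $\Gamma(g)\cap K$ only near $b B$. The cleanest choice is $V_0=B\times B'$ itself, the interior. Then $\rho$ is defined on a neighbourhood $V$ of $\Gamma(g)\cap(B\times B')$. To make $V$ a neighbourhood of all of $K$, I repeat the truncation trick from the proof of Lemma~\ref{lemma:lms_basis'}: replace $\rho$ by $\max\{\rho,d\}$ and set it equal to the constant $d$ off a region where $\rho<d$. Concretely, I would rather apply Step 1 to $V_0=B\times B'$ and then shrink. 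Pick a smaller basic $B_2\in\mathcal{B}$ with $x(w_0)\in B_2$ and $\overline{B_2}\subset B$; by Lemma~\ref{lemma:bases} again, pick $B_2'$ with $g(\overline{B_2})\subset B_2'$ and $\overline{B_2'}\subset B'$. Then $K_2=\overline{B_2\times B_2'}$ is compact and contained in $B\times B'$. Also $\Gamma(g)\cap K_2=\Gamma(g|_{\overline{B_2}})$, which lies inside $\Gamma(g)\cap(B\times B')$ where $\rho$ is defined. Shrinking $V$ to a neighbourhood of $K_2$ inside the domain of $\rho$ is possible: the domain of $\rho$ contains $\Gamma(g)\cap V_0$ but not necessarily all of $K_2$. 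Here I use the extension by the constant $d$ exactly as in Lemma~\ref{lemma:lms_basis'}, giving a continuous $q$-plurisubharmonic function on an open set containing $K_2$.

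\textbf{Step 4.} Check the inequality. The maximum of $\rho$ on $\Gamma(g)\cap K_2$ is $0$, attained at $w_0$, which lies in the interior since $x(w_0)\in B_2$ and $g(x(w_0))\in B_2'$. The set $\Gamma(g)\cap bK_2$ consists of the points over $b B_2$, because $g(\overline{B_2})\subset B_2'$ is open. It is compact and avoids $w_0$, so $\rho<0$ there, and the maximum is strictly negative. Renaming $B_2,B_2'$ as $B,B'$ finishes the proof.

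The main obstacle is Step 3: making sure $\rho$ lives on a full neighbourhood of the box $K$ rather than only near the graph. I would handle it through the $\max\{\cdot,d\}$ gluing already used in Lemma~\ref{lemma:lms_basis'}.
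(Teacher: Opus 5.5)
Your overall plan (invoke Lemma~\ref{lemma:lms_basis'} for $\Gamma(g)$, then use Lemma~\ref{lemma:bases} to put a small box around the maximum point $w_0$) is the paper's. Your remark that $\Gamma(g)$ is only closed in $U\times\C^p$ is a fair point that the paper passes over. The gap is in Step~3.

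You choose $B_2,B_2'$ subject only to $\overline{B_2}\subset B$ and $\overline{B_2'}\subset B'$. You then note that $K_2$ need not lie in the domain $V'$ of $\rho$, and propose to repair this with the $\max\{\rho,d\}$ gluing of Lemma~\ref{lemma:lms_basis'}. That gluing cannot produce a function on a full neighbourhood of $K_2$. To paste $\max\{\rho,d\}$ continuously with the constant $d$, you need $\rho<d$ along the seam. But $\rho$ is only controlled on $\Gamma(g)$; off the graph nothing prevents $\rho\geq d$ all the way up to the boundary of its domain. This is exactly why Lemma~\ref{lemma:lms_basis'} itself only produces $\rho$ on $\mathring{C}\cup H\cup(V\smallsetminus C)$, a neighbourhood of $Z\cap V$, and not on all of $V$. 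So, as written, your function is defined near $\Gamma(g)\cap K_2$ rather than near $K_2$, and the statement is not reached.

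The fix needs no extension at all. $V'$ is open and contains $w_0$, so take a relatively compact open neighbourhood $V''$ of $w_0$ with $\overline{V''}\subset V'$. Apply Lemma~\ref{lemma:bases} at $w_0$ inside $V''$. This gives $B_2\times B_2'\subset V''$ with $g(\overline{B_2})\subset B_2'$, hence $K_2=\overline{B_2}\times\overline{B_2'}\subset V'$. Your Step~4 then goes through verbatim.

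This is what the paper does. It replaces $V$ by the domain $V'$ of $\rho$, which is legitimate: the construction gives $V'\subset V$ and $V'\supset\Gamma(g)\cap V$, so $\Gamma(g)\cap V'=\Gamma(g)\cap V$. It then takes the basic box around $w_0$ small enough that $K\subset V$. With this, the intermediate box around $z_0$ in your Step~2 is superfluous: any relatively compact neighbourhood of $z_0$ will do.
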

\begin{proof}
	By Lemma~\ref{lemma:lms_basis'}, there exists a point $z_0\in\Gamma(g)$ such that: for every neighbourhood $V\subset U\times\C^p$ of $z_0$ there exists a continuous $q$-plurisubharmonic function $\rho:V'\to\R$, where $V'\subset U\times\C^p$ is a neighbourhood of $\Gamma(g)\cap V$, and a point $w_0\in\Gamma(g)\cap V$ such that
	\begin{gather*}
		\rho(w_0)=0 \\
		\rho(w)<0 \quad \forall w\in (\Gamma(g)\cap V)\smallsetminus\{w_0\}
	\end{gather*}
	Fix $V$ relatively compact and take the respective $\rho$ and $w_0$. Without loss of generality, we can assume that $V'=V$.
	
	By Lemma~\ref{lemma:bases}, there exist two basic open sets $B\in\mathcal{B}$ and $B'\in\mathcal{B}'$ such that $w_0\in B\times B'\subset V$ and $g(\overline{B})\subset B'$. Since the basic open sets can be taken arbitrarily small, we can assume that $K\coloneqq\overline{B\times B'}\subset V$. By construction, $w_0\in\mathring{K}$ is a strict global maximum point for $\rho|_{\Gamma(g)\cap K}$.
\end{proof}

Before concluding this section, we prove that a real vector subspace with the local maximum property has a certain form.
\begin{lemma}
\label{lemma:type}
	Let $L\subset\C^N$ be a $(2n+1)$-dimensional real vector subspace with the $(n-1)$-local maximum property, for $1\leq n<N$. Then $L$ is of type $\C^n\times\R$.
\end{lemma}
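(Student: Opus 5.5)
The plan is to put $L$ into a complex-linear normal form and then, assuming $L$ contains too small a complex subspace, write down an explicit quadratic $(n-1)$-plurisubharmonic function that has a strict maximum on $L$ at the origin. That contradicts Definition~\ref{def:locmaxset}.

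\textbf{Normal form.} Let $H\coloneqq L\cap iL$ be the maximal complex subspace of $L$, and put $k\coloneqq\dim_\C H$. Since $2k\leq 2n+1$, we have $k\leq n$. Choose a real complement $T$ of $H$ in $L$, with $\dim_\R T=m\coloneqq 2n+1-2k$.

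We claim $H+\operatorname{span}_\C T$ has complex dimension $k+m$. Suppose not. Then there is a nontrivial relation $h+\sum_j(a_j+ib_j)t_j=0$ with $h\in H$, $a_j,b_j\in\R$ and $t_j$ a basis of $T$. It follows that $\sum_j b_jt_j\in L\cap iL=H$, so every $b_j=0$. Then $\sum_j a_jt_j\in H$, so every $a_j=0$, and hence $h=0$, a contradiction.

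In particular $k+m\leq N$. After a complex-linear change of coordinates we may therefore write $\C^N=\C^k_{z'}\times\C^m_{z''}\times\C^{N-k-m}_{z'''}$ with
\[
	L=\{z''' =0,\ \Im z''=0\}=\C^k\times\R^m\times\{0\}.
\]
The local maximum property is invariant under complex-linear automorphisms, because $q$-plurisubharmonicity is.

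\textbf{Contradiction if $k\leq n-1$.} Consider
\[
	\phi(z)\coloneqq-|z'|^2-\Re\sum_{j=1}^m (z''_j)^2+|z'''|^2 .
\]
The term $\Re\sum_j (z''_j)^2$ is pluriharmonic. Hence the Levi form of $\phi$ has exactly $k$ negative eigenvalues, and the remaining eigenvalues are $0$ or positive. So $\phi$ is a smooth $q$-plurisubharmonic function for every $q\geq k$, in particular for $q=n-1$. This uses the standard characterisation of smooth $q$-plurisubharmonic functions as those whose Levi form has at most $q$ negative eigenvalues, which matches the paper's index convention for the $(n-1)$-local maximum property.

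On $L$ we have $\phi=-|z'|^2-|\Re z''|^2$, which is $0$ at the origin and strictly negative elsewhere on $L$. Take $K$ to be a closed ball centred at $0$ inside the neighbourhood $U$ given by Definition~\ref{def:locmaxset} at the point $0\in L$. Then
\[
	\max_{L\cap K}\phi=0>\max_{L\cap bK}\phi,
\]
so $L$ is not an $(n-1)$-local maximum set. This contradiction forces $k=n$.

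\textbf{Conclusion.} With $k=n$ we get $m=1$, so $L=H\oplus\R t$. By the normal form above, $t$ is $\C$-independent of $H$. A complex-linear (even unitary, after Gram--Schmidt) change of coordinates sending $H$ to $\C^n\times\{0\}$ and $t$ to $e_{n+1}$ turns $L$ into $\C^n\times\R\times\{0\}\subset\C^n\times\C\times\C^{N-n-1}$, which is the claimed type.

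\textbf{Main obstacle.} The only delicate point is the linear-algebra step showing that $L$ splits as a complex part plus a totally real part, with the totally real part $\C$-independent of $H$. That step is what makes the coordinates above legitimate. The function $\phi$ itself is straightforward once those coordinates are available. One should also double-check the eigenvalue convention for $q$-plurisubharmonicity, to make sure that $k\leq n-1$ negative eigenvalues is exactly what $(n-1)$-plurisubharmonic allows.
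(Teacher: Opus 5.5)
Your proof is correct, and it reaches the contradiction by a different mechanism than the paper's. The paper also sets $H=L\cap iL$ and assumes $k=\dim_\C H<n$, but it then slices. It picks a complex subspace $\tilde H$ of codimension $k$ with $\tilde H\cap H=\{0\}$, and observes that $\tilde L=L\cap\tilde H$ is a totally real subspace of real dimension $2(n-k)+1$. It then asserts that $\tilde L$ must have the ordinary local maximum property because $k\leq n-1$. Finally it contradicts this with the pluriharmonic function $\Re\bigl(-\sum_j z_j^2\bigr)$, written in a complex basis extending a real basis of $\tilde L$. The assertion about $\tilde L$ relies on S{\l}odkowski's slice characterisation of $q$-local maximum sets: their intersections with complex affine subspaces of codimension $q$ are local maximum sets. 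The paper uses the same characterisation in Proposition~\ref{prop:union}.

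You instead keep the complex directions of $L$ and pay for them in the test function. The function $-|z'|^2-\Re\sum_j (z''_j)^2$ has exactly $k\leq n-1$ negative Levi eigenvalues, so it is directly admissible in Definition~\ref{def:locmaxset}. No slicing theorem is needed. You only use the eigenvalue description of smooth $q$-plurisubharmonic functions, which is the same convention the paper uses in the proof of Lemma~\ref{lemma:reduction}, so your worry about the index convention is resolved.

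Your normal form $L=\C^k\times\R^m\times\{0\}$ is the same linear algebra behind the paper's choice of basis. You also spell out the final step, which the paper leaves implicit: $k=n$, $m=1$, and $t$ is $\C$-independent of $H$, so $L$ is of type $\C^n\times\R$.

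The trade-off is this. The paper's route isolates the geometric reason, namely that a totally real subspace is never a local maximum set, at the cost of invoking S{\l}odkowski's theorem. Your route is more elementary and self-contained. The term $|z'''|^2$ in your $\phi$ is harmless but unnecessary, since $z'''$ vanishes on $L$ and dropping it only adds zero eigenvalues.
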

\begin{proof}
	Let $H\subset\C^N$ be the maximal complex vector subspace contained in $L$ and let $k=\dim_\C(H)$. By contradiction, suppose that $k<n$.  
	
	Take a $(N-k)$-dimensional complex vector subspace $\tilde{H}\subset\C^N$ such that
	\begin{equation}
	\label{eq:orthogonal}
		\tilde{H}\cap H=\{0\}
	\end{equation}
	Let $\tilde{L}\coloneqq L\cap\tilde{H}$. Since $\codim_\C(\tilde{H})=k\leq n-1$, then $\tilde{L}$ has the local maximum property. Moreover, note that
	\[
		\dim_\R(\tilde{L})=(2n+1)+2(N-k)-2N=2(n-k)+1\eqqcolon2r+1\geq3
	\]
	By~\eqref{eq:orthogonal}, $\tilde{L}$ does not contain complex lines and thus $\tilde{L}\cap i\tilde{L}=\{0\}$. Let 
	\[
		\mathcal{B}\coloneqq\{v_1,\dots,v_{2r+1}\}
	\]    
	be a real basis of $\tilde{L}$. Then $\mathcal{B}$ is a complex basis of $\tilde{L}+i\tilde{L}$ and we can extend $\mathcal{B}$ to a complex basis $\tilde{\mathcal{B}}\coloneqq\{v_1,\dots,v_N\}$ of $\C^N$. For each $z\in\C^N$, we can write
	\[
		z=\sum_{j=1}^N z_jv_j
	\]
	for some $z_j\in\C$. Let us define
	\[
		f:\C^N\to\C, \quad f(z)=-\sum_{j=1}^{2r+1} z_j^2
	\]
	Note that $f$ is holomorphic and thus the function $u\coloneqq\Re(f)$ is pluriharmonic on $\C^N$. By construction, for each $z\in\tilde{L}$ we have that
	\[
		z=\sum_{j=1}^{2r+1} x_jv_j
	\]
	for some $x_j\in\R$ and hence
	\[
		u(z)=-\sum_{j=1}^{2r+1} x_j^2 \quad \forall z\in\tilde{L}
	\]
	Thus $u$ is a pluriharmonic function on $\tilde{L}$ which has a strict maximum at $0$ and therefore $\tilde{L}$ does not have the local maximum property. This is a contradiction.
\end{proof}

\section{Upper closed limit and local maximum property}
\label{sec:limit}
In this section, we prove that the upper closed limit of a net preserves the local maximum property.

First of all, let us recall some definitions.
\begin{definition}
	Let $(\Lambda,\leq)$ be a directed set and let $\Lambda'\subset\Lambda$ be a subset. We say that $\Lambda'$ is \emph{cofinal} if for every $\lambda\in\Lambda$ there exists $\lambda'\in\Lambda'$ such that $\lambda\leq\lambda'$. We say, instead, that $\Lambda'$ is \emph{residual} if there exists $\lambda_0\in\Lambda$ such that
	\[
		\{\lambda\in\Lambda \mid \lambda\geq\lambda_0\}\subset\Lambda'
	\]
\end{definition}
\begin{definition}
	Let $X$ be a topological space and let $\{A_\lambda\}_{\lambda\in\Lambda}$ be a net of closed subsets of $X$. A point $x\in X$ is called \emph{cluster point} (resp., \emph{limit point}) of $\{A_\lambda\}$ if for every neighbourhood $U\subset X$ of $x$ there exists a cofinal (resp., residual) subset $\Lambda'\subset\Lambda$ such that $U\cap A_\lambda\neq\emptyset$ for every $\lambda\in\Lambda'$.
\end{definition}
\begin{definition}
	The \emph{upper closed limit} of $\{A_\lambda\}$ is
	\[
		\limsup_{\lambda\in\Lambda} A_\lambda=\bigl\{x\in X \mid \text{$x$ is a cluster point of $\{A_\lambda\}$}\bigr\}
	\]
	The \emph{lower closed limit} of $\{A_\lambda\}$ is
	\[
		\liminf_{\lambda\in\Lambda} A_\lambda=\bigl\{x\in X \mid \text{$x$ is a limit point of $\{A_\lambda\}$}\bigr\}
	\]
\end{definition}

Let us also recall the following characterisations (see~\cite[Proposition~5.2.2]{beer:closedsets}).
\begin{proposition}
	Let $X$ be a Hausdorff space and let $\{A_\lambda\}_{\lambda\in\Lambda}$ be a net of closed subsets. Then
	\begin{align*}
		\limsup_{\lambda\in\Lambda} A_\lambda &= \bigcap_{\substack{\Lambda'\subset\Lambda \\ \text{\emph{residual}}}}\overline{\bigcup_{\lambda\in\Lambda'} A_\lambda} \\
		\liminf_{\lambda\in\Lambda} A_\lambda &= \bigcap_{\substack{\Lambda'\subset\Lambda \\ \text{\emph{cofinal}}}}\overline{\bigcup_{\lambda\in\Lambda'} A_\lambda}
	\end{align*}
\end{proposition}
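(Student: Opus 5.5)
The plan is to prove each equality by double inclusion, working directly from the definitions of cluster and limit points. Everything reduces to one combinatorial duality between residual and cofinal subsets of a directed set $\Lambda$:
\begin{enumerate}
	\item[(a)] every cofinal subset meets every residual subset (a residual set contains a tail $\{\lambda\geq\lambda_0\}$, and a cofinal set has an element $\geq\lambda_0$);
	\item[(b)] a subset $\Lambda'\subset\Lambda$ fails to be residual if and only if $\Lambda\smallsetminus\Lambda'$ is cofinal, and fails to be cofinal if and only if $\Lambda\smallsetminus\Lambda'$ is residual.
\end{enumerate}
Both facts are immediate from the definitions. It also suffices to test neighbourhoods $U$ of $x$ that are open. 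For such $U$ set
\[
	C_U\coloneqq\{\lambda\in\Lambda \mid U\cap A_\lambda\neq\emptyset\}.
\]
Since any superset of a cofinal (resp. residual) set is again cofinal (resp. residual), $x$ is a cluster point iff $C_U$ is cofinal for every open $U\ni x$. Likewise $x$ is a limit point iff $C_U$ is residual for every such $U$.

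For the upper closed limit, first let $x$ be a cluster point and let $\Lambda'$ be residual. For each open $U\ni x$, fact (a) gives some $\lambda\in C_U\cap\Lambda'$, so $U$ meets $\bigcup_{\lambda\in\Lambda'}A_\lambda$. Hence $x$ lies in the closure of this union, for every residual $\Lambda'$. Conversely, suppose $x$ is not a cluster point. Then some open $U\ni x$ has $C_U$ not cofinal, so by (b) the set $\Lambda'\coloneqq\Lambda\smallsetminus C_U$ is residual. By construction $U$ is disjoint from every $A_\lambda$ with $\lambda\in\Lambda'$. So $U$ is an open neighbourhood of $x$ missing $\bigcup_{\lambda\in\Lambda'}A_\lambda$, and $x$ is not in its closure.

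The lower closed limit is handled the same way with the roles of cofinal and residual exchanged. If $x$ is a limit point, each $C_U$ is residual, so by (a) it meets every cofinal $\Lambda'$, which puts $x$ in each closure $\overline{\bigcup_{\lambda\in\Lambda'}A_\lambda}$. If $x$ is not a limit point, some open $U\ni x$ has $C_U$ not residual. Then $\Lambda\smallsetminus C_U$ is cofinal by (b), and $U$ misses the union of the $A_\lambda$ over it.

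The only delicate point is stating the complement duality (b) correctly; otherwise the argument is routine. In particular, the Hausdorff assumption and the closedness of the $A_\lambda$ play no essential role in these equalities.
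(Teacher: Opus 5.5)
Your proof is correct. The duality (a)--(b) between residual and cofinal subsets is exactly the mechanism needed, and both double inclusions go through as you wrote them.

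The paper gives no argument for this statement: it simply quotes \cite[Proposition~5.2.2]{beer:closedsets}. Your proposal is therefore a self-contained replacement for that citation rather than an alternative to an existing proof.

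What your version buys is clarity about the hypotheses. As you observe, neither the Hausdorff property nor the closedness of the $A_\lambda$ is used. Since you work only with open neighbourhoods, the argument is valid in an arbitrary topological space.

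This matters because the paper's subsequent arguments, for instance the proof of Lemma~\ref{lemma:limsup_inter}, pass from ``$x$ lies in the closure of $\bigcup_{\lambda\in\Lambda'}A_\lambda$'' to a sequence in that union converging to $x$. That step requires first countability. It is harmless in $\C^N$, where the results are actually applied, but your neighbourhood formulation avoids the issue altogether.

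Your argument also gives Lemma~\ref{lemma:characterisation_new} at no cost, since every residual set contains a tail $\{\lambda\geq\lambda_0\}$ and every tail is residual.
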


Our goal is to prove that the upper closed limit of a net of $q$-local maximum sets in $\C^N$ is a closed $q$-local maximum set. In order to do so, we proceed by steps. The first step is to prove that the union and the intersection (the latter, with some further conditions) of a family of $q$-local maximum sets remain $q$-local maximum sets.
\begin{proposition}
\label{prop:union}
	Let $U\subset\C^N$ be an open subset and let $\{A_\lambda\}_{\lambda\in\Lambda}$ be a family of closed subsets of $U$. If $A_\lambda$ is a $q$-local maximum set for every $\lambda\in\Lambda$, then
	\[
		A\coloneqq\overline{\bigcup_{\lambda\in\Lambda}A_\lambda}^U
	\]
	is a $q$-local maximum set, where $\overline{\cdot}^U$ denotes the closure in $U$.
\end{proposition}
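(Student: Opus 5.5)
I will argue by contradiction, using the characterisation of $q$-local maximum sets via peak functions together with the localisation contained in Lemma~\ref{lemma:lms_basis'}. That lemma is stated for closed subsets of $\C^n$, but its proof is purely local and works verbatim for closed subsets of $U$. So suppose $A$ is not a $q$-local maximum set. The definition then gives a point $z_0\in A$, a small neighbourhood $V\Subset U$, a compact $C\subset V$ and a $q$-plurisubharmonic $\phi$ on $V$ with $\max_{A\cap C}\phi>\max_{A\cap bC}\phi$. Following the proof of Lemma~\ref{lemma:lms_basis'} (Słodkowski's Lemma~2.2), I add a strictly plurisubharmonic perturbation $f$. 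This yields $\psi\coloneqq\phi+f$, which is $q$-plurisubharmonic near $C$, together with a point $w_0\in A\cap\mathring{C}$ such that $\psi(w_0)=0$ and $\psi<0$ on $(A\cap C)\smallsetminus\{w_0\}$. In particular $\max_{A\cap bC}\psi<d<0$ for some $d$.

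The key step is to transfer this strict peak to one of the sets $A_\lambda$. The set $A\cap bC$ is compact, so by upper semicontinuity $\{\psi<d\}$ is an open neighbourhood $H$ of $A\cap bC$. The set $bC\smallsetminus H$ is compact and disjoint from $A$. Each $A_\lambda\subset A$, hence $A_\lambda\cap bC\subset H$, and therefore $\max_{A_\lambda\cap bC}\psi<d$ for every $\lambda$ (when this set is nonempty). On the other side, $w_0\in A$ is a limit of points $a_k\in A_{\lambda_k}$ because $A$ is the closure of the union. Since $\psi$ is merely upper semicontinuous, $\psi(a_k)$ need not tend to $0$. I will therefore first replace $\psi$ by a continuous $q$-plurisubharmonic function with the same strict-peak property on $A\cap C$, constructed exactly as $\rho$ in Lemma~\ref{lemma:lms_basis'}. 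Should this not be directly possible, I will instead use Słodkowski's result that smooth $q$-plurisubharmonic functions suffice to test the local maximum property; this is already used at the start of the proof of Lemma~\ref{lemma:lms_basis'}, where $\phi$ is taken smooth. With $\psi$ continuous, $\psi(a_k)\to 0>d$. Choosing $k$ large with $a_k\in\mathring{C}$, we get
\[
\max_{A_{\lambda_k}\cap C}\psi\ \ge\ \psi(a_k)\ >\ d\ >\ \max_{A_{\lambda_k}\cap bC}\psi .
\]
When $A_{\lambda_k}\cap bC=\emptyset$ the right-hand side is read as $-\infty$, and the strict inequality still holds.

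It remains to contradict the $q$-local maximum property of $A_{\lambda_k}$ at $a_k$. The definition of that property only guarantees the maximum principle on some neighbourhood $U_{a_k}$ of $a_k$, which may be much smaller than $V$. Here I will use that being a $q$-local maximum set is a local property equivalent to its global form on any open set (Słodkowski, Theorem~5.1). Equivalently, I will shrink $C$ to a small compact neighbourhood of $w_0$ in which the peak persists, so that everything sits inside a neighbourhood on which the maximum principle for $A_{\lambda_k}$ is available. I expect this locality issue, together with securing continuity of the peak function, to be the main obstacle. Both are handled by invoking Słodkowski's characterisation rather than the raw definition. Once they are settled, the displayed inequality contradicts the $q$-local maximum property of $A_{\lambda_k}$, and hence $A$ is a $q$-local maximum set.
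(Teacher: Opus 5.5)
Your argument is correct in substance, but it takes a genuinely different route from the paper's. The paper never works with $q$-plurisubharmonic functions here. It uses Słodkowski's slicing characterisation: $A$ is a $q$-local maximum set if and only if $A\cap L$ is a local maximum set for every complex affine $L$ of codimension $q$. Some points of $A\cap L$ are only approximated from outside $L$; the paper handles them by passing to the upper closed limit $Z_a$ of the translated slices $(L_0+a_n)\cap A_{\lambda_n}$. It then quotes Słodkowski's Proposition~1.2(d),(e) for ordinary local maximum sets.

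You instead transfer a violation of the maximum principle on a compact $C$ directly from $A$ to a single $A_{\lambda_k}$. You need only $A_{\lambda_k}\cap bC\subset A\cap bC$, plus continuity of a smooth test function at a point of $A\cap\mathring{C}$ approximated by $a_k\in A_{\lambda_k}$. This bypasses slicing, upper closed limits and Proposition~1.2(d),(e); your argument with $q=0$ even reproves 1.2(e). The price is that you need the global form of the maximum principle for the closed $q$-local maximum sets $A_{\lambda_k}$. The preliminary perturbation to a strict peak on $A$ is superfluous: any maximiser $w\in A\cap\mathring{C}$ of the smooth $\phi$ does the same job.

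At each of the two obstacles you offer two justifications, and only one of each pair works.

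\emph{Continuity.} It has to come from Słodkowski's reduction to smooth test functions, as at the start of the proof of Lemma~\ref{lemma:lms_basis'}. The construction of $\rho$ there gives a continuous function only because $\phi$ was smooth to begin with; $\max\{\psi,d\}$ is no more continuous than $\psi$.

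\emph{Locality.} Shrinking $C$ around $w_0$ does not help. $C$ is fixed before $k$ is chosen, while the neighbourhoods on which $A_{\lambda_k}$ obeys the maximum principle depend on $k$ and may shrink as $k\to\infty$. Moreover, $w_0$ need not belong to $A_{\lambda_k}$ at all. The correct move is to localise at a peak of $A_{\lambda_k}$ itself:
\begin{enumerate}
\item Apply Słodkowski's Lemma~2.2 to $A_{\lambda_k}$, $C$ and $\psi$. This gives a strictly plurisubharmonic $f_k$ and a point $w_k\in A_{\lambda_k}\cap\mathring{C}$ with $(\psi+f_k)(w_k)=0$ and $\psi+f_k<0$ on $(A_{\lambda_k}\cap C)\smallsetminus\{w_k\}$.
\item Pick $d<0$ above $\max_{A_{\lambda_k}\cap bC}(\psi+f_k)$. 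As in Lemma~\ref{lemma:lms_basis'}, take the function equal to $\max\{\psi+f_k,d\}$ on $\mathring{C}$ and to $d$ on $\{\psi+f_k<d\}\cup(U\smallsetminus C)$.
\item This function is $q$-plurisubharmonic on a neighbourhood of all of $A_{\lambda_k}$ and peaks strictly at $w_k$. That contradicts criterion~(iii) of Słodkowski's Theorem~5.1.
\end{enumerate}
With these choices your proof is complete.
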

\begin{proof}
	Let $L\subset\C^N$ be a complex affine subspace of codimension $q$. To conclude, it is sufficient to show that $A\cap L$ is a local maximum set, if non-empty.
	
	Let $a\in A\cap L$, then there exists a sequence $\{a_n\}\subset\bigcup_{\lambda} A_\lambda\subset U$ which converges to $a$. Note that for every $n$ there exists $\lambda_n\in\Lambda$ such that $a_n\in A_{\lambda_n}$. Let $L_0\coloneqq L-a$ be the complex vector subspace associated to $L$. Then $a_n\in(L_0+a_n)\cap A_{\lambda_n}\subset U$. Let
	\[
		Z_a\coloneqq\limsup_{n\to+\infty}\,\!\!^U(L_0+a_n)\cap A_{\lambda_n}
	\]
	where $\limsup\,\!\!^U$ denotes the upper closed limit with respect to $U$. Since every $A_{\lambda_n}$ is a $q$-local maximum set, then every $(L_0+a_n)\cap A_{\lambda_n}$ has the local maximum property and thus $Z_a$ is a local maximum set, by~\cite[Proposition~1.2(d)]{slodkowski:pseudoconcave}. Clearly, $Z_a\subset A$. Moreover,
	\[
		\limsup_{n\to+\infty}(L_0+a_n)=L
	\]
	and thus $Z_a\subset A\cap L$. Since $a\in A\cap L$ was arbitrary, we have that
	\[
		A\cap L=\bigcup_{a\in A\cap L} Z_a
	\]
	Note that $A\cap L$ is closed in $U$ and hence we can write
	\[
		A\cap L=\overline{\bigcup_{a\in A\cap L} Z_a}^U
	\]
	Since this is the closure of a union of local maximum sets, then it is a local maximum set, by~\cite[Proposition~1.2(e)]{slodkowski:pseudoconcave}. Thus $A$ has the $q$-local maximum property.
\end{proof}

\begin{lemma}
\label{lemma:limsup_inter}
	Let $X$ be a Hausdorff space and let $\{A_\lambda\}_{\lambda\in\Lambda}$ be a net of closed subsets of $X$. If $A_{\lambda'}\subset A_\lambda$ whenever $\lambda'\geq\lambda$, then
	\[
		\limsup_{\lambda\in\Lambda} A_\lambda=\bigcap_{\lambda\in\Lambda} A_\lambda
	\]
\end{lemma}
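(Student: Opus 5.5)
Since the net is decreasing, I will prove the equality by double inclusion, using the characterisation of the upper closed limit recalled in the Proposition above:
\[
	\limsup_{\lambda\in\Lambda} A_\lambda=\bigcap_{\Lambda'\text{ residual}}\overline{\bigcup_{\lambda\in\Lambda'}A_\lambda}.
\]

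For the inclusion $\supset$, take $x\in\bigcap_\lambda A_\lambda$. Then every neighbourhood $U$ of $x$ meets $A_\lambda$ for every $\lambda\in\Lambda$. Since $\Lambda$ itself is cofinal, $x$ is a cluster point, and in fact even a limit point. This inclusion needs neither monotonicity nor the Hausdorff assumption.

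For the inclusion $\subset$, fix $\lambda_0\in\Lambda$. I will show that $\limsup A_\lambda\subset A_{\lambda_0}$. Consider the residual set $\Lambda_0\coloneqq\{\lambda\mid\lambda\geq\lambda_0\}$. By monotonicity, $A_\lambda\subset A_{\lambda_0}$ for every $\lambda\in\Lambda_0$, so
\[
	\bigcup_{\lambda\in\Lambda_0}A_\lambda\subset A_{\lambda_0}.
\]
Because $A_{\lambda_0}$ is closed, the closure of this union is still contained in $A_{\lambda_0}$. The characterisation then gives $\limsup A_\lambda\subset A_{\lambda_0}$, and intersecting over all $\lambda_0$ gives the claim.

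The same inclusion can also be seen directly, without the Proposition. If $x\notin A_{\lambda_0}$, take the neighbourhood $U\coloneqq X\smallsetminus A_{\lambda_0}$ of $x$, which is open because $A_{\lambda_0}$ is closed. Then $U\cap A_\lambda=\emptyset$ for all $\lambda\geq\lambda_0$. Any cofinal $\Lambda'$ contains some $\lambda'\geq\lambda_0$, and for that index $U\cap A_{\lambda'}=\emptyset$. Hence $x$ is not a cluster point.

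There is no real obstacle here. The only point needing care is matching cofinality against the residual tail $\{\lambda\geq\lambda_0\}$: every cofinal set meets that tail, and this is exactly where monotonicity is used.
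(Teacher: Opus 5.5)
Your proof is correct and follows the paper's argument: you get $\supset$ because $\Lambda$ is cofinal in itself, and you get $\subset$ by fixing $\lambda_0$, using the residual tail $\{\lambda\geq\lambda_0\}$ in the characterisation, and invoking the closedness of $A_{\lambda_0}$. Your step that the closure of $\bigcup_{\lambda\geq\lambda_0}A_\lambda$ lies in the closed set $A_{\lambda_0}$ is in fact slightly cleaner than the paper's. The paper instead passes through a convergent sequence, which tacitly uses first countability; that is harmless in $\C^N$ but not automatic in an arbitrary Hausdorff space.
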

\begin{proof}
	If $x\in\bigcap_{\lambda} A_\lambda$, then for every neighbourhood $V\subset X$ of $x$ it is clear that $x\in A_\lambda\cap V\neq\emptyset$ for every $\lambda\in\Lambda$. Since $\Lambda$ is trivially a cofinal subset of itself, we conclude that $x\in\limsup_\lambda A_\lambda$.
	
	Viceversa, let
	\[
		x\in\limsup_{\lambda\in\Lambda} A_\lambda=\bigcap_{\substack{\Lambda'\subset\Lambda \\ \text{residual}}}\overline{\bigcup_{\lambda\in\Lambda'} A_\lambda}
	\]
	Fix $\lambda_0\in\Lambda$. To conclude, we need to show that $x\in A_{\lambda_0}$. Since $\Lambda$ is a directed set, then there exists $\lambda'\in\Lambda$ such that $\lambda'\geq\lambda_0$. Set
	\[
		\Lambda'\coloneqq\{\lambda\in\Lambda \mid \lambda\geq\lambda'\}
	\]
	Then $\Lambda'\subset\Lambda$ is a residual subset and thus
	\[
		x\in\overline{\bigcup_{\lambda\in\Lambda'} A_\lambda}
	\]
	Hence there exists a sequence $\{x_n\}\subset\bigcup_{\lambda\in\Lambda'} A_\lambda$ which converges to $x$. Note that for every $n$ there exists $\lambda_n\in\Lambda'$ such that $x_n\in A_{\lambda_n}$. By construction, we have that, for every $n$, $\lambda_n\geq\lambda'\geq\lambda_0$ and thus $A_{\lambda_n}\subset A_{\lambda_0}$, by hypothesis. In particular, $x_n\in A_{\lambda_0}$ for every $n$. Therefore
	\[
		x=\lim_{n\to+\infty} x_n\in A_{\lambda_0}
	\]
	since $A_{\lambda_0}$ is closed.
\end{proof}

\begin{proposition}
\label{prop:intersection}
	Let $U\subset\C^N$ be an open subset and let $\{A_\lambda\}_{\lambda\in\Lambda}$ be a net of closed subsets of $U$, such that $\bigcap_\lambda A_\lambda$ is non-empty. Assume that $A_{\lambda'}\subset A_\lambda$ whenever $\lambda'\geq\lambda$ and that every $A_\lambda$ is a $q$-local maximum set. Then
	\[
		A\coloneqq\bigcap_{\lambda\in\Lambda} A_\lambda
	\]
	is a $q$-local maximum set.
\end{proposition}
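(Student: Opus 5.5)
By Lemma~\ref{lemma:limsup_inter}, since the net is decreasing, we have
\[
	A=\bigcap_{\lambda\in\Lambda}A_\lambda=\limsup_{\lambda\in\Lambda}A_\lambda ,
\]
so the statement reduces to showing that an upper closed limit of $q$-local maximum sets is a $q$-local maximum set. As in the proof of Proposition~\ref{prop:union}, I will reduce to the case $q=0$ by slicing. Fix a complex affine subspace $L\subset\C^N$ of codimension $q$; it suffices to show that $A\cap L$, when non-empty, is a local maximum set.

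Since $A_\lambda\cap L$ has the local maximum property for every $\lambda$, the first idea is to note that $A\cap L=\bigcap_\lambda(A_\lambda\cap L)$. This is again a decreasing net of closed subsets of $U$, so by Lemma~\ref{lemma:limsup_inter} it equals $\limsup_\lambda(A_\lambda\cap L)$. I would then invoke~\cite[Proposition~1.2(d)]{slodkowski:pseudoconcave} to conclude that this upper closed limit is a local maximum set. That proposition is stated for sequences, whereas here we have a general net, so this step needs care.

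\textbf{Main obstacle: passing from nets to sequences.} I expect this to be the hardest step, and I would handle it directly via the definition. Suppose $A\cap L$ fails the local maximum property. Lemma~\ref{lemma:lms_basis'}, applied inside $L$, gives a point $w_0\in A\cap L$, a compact neighbourhood $K$, and a continuous plurisubharmonic $\rho$ defined near $K$ such that $\rho(w_0)=0$ and $\rho<0$ on $(A\cap L\cap K)\smallsetminus\{w_0\}$. Adding $\epsilon|z-w_0|^2$ and shrinking $K$, we may assume the following: $\rho+\epsilon|z-w_0|^2$ is still plurisubharmonic, and it has a strict maximum at $w_0$ on $A\cap L\cap K$ with a definite gap $\max_{A\cap L\cap bK}(\cdot)<-\delta<0$.

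\textbf{Transferring the gap to some $A_\lambda$.} Since $A\cap L\cap bK$ is the decreasing intersection of the compact sets $A_\lambda\cap L\cap bK$, compactness shows that for all large $\lambda$ every point of $A_\lambda\cap L\cap bK$ lies in the open set $\{\rho<-\delta\}$. Otherwise a nested family of non-empty compacts $A_\lambda\cap L\cap bK\cap\{\rho\geq-\delta\}$ would have a common point in $A\cap L\cap bK$ with $\rho\geq-\delta$, a contradiction. On the other hand, $w_0\in A_\lambda\cap L\cap K$ for every $\lambda$, and $\rho(w_0)=0$. Hence
\[
	\max_{A_\lambda\cap L\cap K}\rho\geq 0>\max_{A_\lambda\cap L\cap bK}\rho ,
\]
which contradicts the local maximum property of $A_\lambda\cap L$. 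For this to be legitimate, $K$ must be chosen small enough to lie inside the neighbourhood provided by Definition~\ref{def:locmaxset} for $A_\lambda\cap L$ at points near $w_0$. I would address this by using the equivalent global form of the local maximum property on arbitrary small compacts (Słodkowski's characterisation quoted before Lemma~\ref{lemma:lms_basis'}). With this, the argument also bypasses the net/sequence issue entirely, and non-emptiness of $A$ ensures the statement is meaningful.
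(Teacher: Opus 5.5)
Your opening reduction matches the paper's proof exactly, and the paper stops there: slice by $L$, rewrite $A\cap L$ as $\limsup_\lambda(A_\lambda\cap L)$ via Lemma~\ref{lemma:limsup_inter}, and cite \cite[Proposition~1.2(d)]{slodkowski:pseudoconcave}.

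You diverge by replacing that citation with a direct compactness argument, and that argument is correct. The maximum point $w_0$ lies in $A\subset A_\lambda$ for every $\lambda$. The sets $A_\lambda\cap L\cap bK\cap\{\rho\ge c\}$ form a downward directed family of compacts with empty intersection. So for all large $\lambda$ the strict gap persists for $A_\lambda\cap L$. This contradicts the maximum property of $A_\lambda\cap L$ once that property is used in its global form on arbitrary compacts, which is standard.

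Compared with the paper, your route is more self-contained. It needs neither Lemma~\ref{lemma:limsup_inter} nor the external limit theorem. It also works verbatim with $q$-plurisubharmonic test functions on $A$ and the $A_\lambda$, so the slicing by $L$ is not even needed.

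The paper's route is shorter and parallels Proposition~\ref{prop:union}. However, as you noticed, it applies a result stated for sequences to a net. This is easily repaired because $U$ is hereditarily Lindel\"of. The open set $U\smallsetminus A$ is covered by countably many $U\smallsetminus A_{\lambda_k}$. Choosing $\mu_k\geq\mu_{k-1},\lambda_k$ then gives a decreasing sequence with $A=\bigcap_k A_{\mu_k}$.

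Two cosmetic points about your write-up. First, the $\epsilon|z-w_0|^2$ perturbation is unnecessary. The gap already follows from continuity of $\rho$ and compactness of $A\cap L\cap bK$, which does not contain $w_0$. Second, the $\rho$ of Lemma~\ref{lemma:lms_basis'} is only defined near $A\cap L\cap V$. It is cleaner to use the test function $\phi$ from the negated definition, which is defined on a whole neighbourhood of $K$ and hence near every $A_\lambda\cap L\cap K$ as well.
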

\begin{proof}
	Let $L\subset\C^N$ be a complex affine subspace of codimension $q$ intersecting $A$. Then
	\[
		A\cap L=\bigcap_{\lambda\in\Lambda} A_\lambda\cap L
	\]
	Note that every $A_\lambda\cap L$ is a local maximum set. Since $A_{\lambda'}\cap L\subset A_\lambda\cap L$ whenever $\lambda'\geq\lambda$, then
	\[
		\bigcap_{\lambda\in\Lambda} A_\lambda\cap L=\limsup_{\lambda\in\Lambda} A_\lambda\cap L
	\]
	by Lemma~\ref{lemma:limsup_inter}. By~\cite[Proposition~1.2(d)]{slodkowski:pseudoconcave}, the right hand member is a local maximum set and thus so is $A\cap L$. Therefore $A$ is a $q$-local maximum set.
\end{proof}

The second step is to prove that the upper closed limit preserves the local maximum property when all the $q$-local maximum sets considered are closed. In order to do so, we use another characterisation of this limit.
\begin{definition}
	Let $X$ be a Hausdorff space and let $\{A_\lambda\}_{\lambda\in\Lambda}$ be a net of closed subsets of $X$. We say that $\{A_\lambda\}$ \emph{escapes to infinity} if for every compact $K\subset X$ there exists a residual subset $\Lambda'\subset\Lambda$ such that 
	\[
		A_\lambda\cap K=\emptyset \quad \forall\lambda\in\Lambda'
	\]
\end{definition}
\begin{remark}
\label{rem:limsup}
	We have that $\limsup_\lambda A_\lambda$ is non-empty if and only if $\{A_\lambda\}$ does not escape to infinity.
\end{remark}
\begin{remark}
\label{rem:escaping}
	Let $\{A_\lambda\}_{\lambda\in\Lambda}$ be a net of subsets of $\C^N$. Then $\{A_\lambda\}$ escapes to infinity if and only if
	\[
		\lim_{\lambda\in\Lambda} \operatorname{dist}(0,A_\lambda)=+\infty
	\]
\end{remark}

\begin{lemma}
\label{lemma:characterisation_new}
	Let $X$ be a Hausdorff space and let $\{A_\lambda\}_{\lambda\in\Lambda}$ be a net of closed subsets of $X$. Then
	\[
		\limsup_{\lambda\in\Lambda} A_\lambda=\bigcap_{\lambda_0\in\Lambda}\overline{\bigcup_{\lambda\geq\lambda_0} A_\lambda}
	\]
\end{lemma}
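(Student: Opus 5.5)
The plan is to deduce the identity from the characterisation already recalled from \cite[Proposition~5.2.2]{beer:closedsets},
\[
	\limsup_{\lambda\in\Lambda} A_\lambda=\bigcap_{\substack{\Lambda'\subset\Lambda \\ \text{residual}}}\overline{\bigcup_{\lambda\in\Lambda'} A_\lambda},
\]
by showing that the intersection over all residual subsets equals the intersection over the particular residual subsets $\Lambda_{\lambda_0}\coloneqq\{\lambda\in\Lambda \mid \lambda\geq\lambda_0\}$, $\lambda_0\in\Lambda$. I will prove the two inclusions separately.

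First, each $\Lambda_{\lambda_0}$ is residual by definition, taking $\lambda_0$ itself as the witness. Hence the family $\{\Lambda_{\lambda_0}\}_{\lambda_0\in\Lambda}$ is a subfamily of the residual subsets. Intersecting over a larger family gives a smaller set, so
\[
	\bigcap_{\Lambda'\ \text{residual}}\overline{\bigcup_{\lambda\in\Lambda'}A_\lambda}\subset\bigcap_{\lambda_0\in\Lambda}\overline{\bigcup_{\lambda\geq\lambda_0}A_\lambda}.
\]

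Conversely, let $\Lambda'$ be any residual subset. By definition there is some $\lambda_0$ with $\Lambda_{\lambda_0}\subset\Lambda'$. Then $\bigcup_{\lambda\geq\lambda_0}A_\lambda\subset\bigcup_{\lambda\in\Lambda'}A_\lambda$, and taking closures preserves this inclusion. So every set in the first intersection contains some set of the second family, and therefore
\[
	\bigcap_{\lambda_0\in\Lambda}\overline{\bigcup_{\lambda\geq\lambda_0}A_\lambda}\subset\bigcap_{\Lambda'\ \text{residual}}\overline{\bigcup_{\lambda\in\Lambda'}A_\lambda}.
\]
Combining the two inclusions with the proposition gives the claim.

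There is no real obstacle; the only point needing care is the direction of the inclusions when comparing intersections over a family and a subfamily. Alternatively, one could argue directly from the definition of cluster point: $x$ is a cluster point if and only if every neighbourhood of $x$ meets $A_\lambda$ for some $\lambda\geq\lambda_0$, for each $\lambda_0$. This is equivalent to $x$ lying in the closure of $\bigcup_{\lambda\geq\lambda_0}A_\lambda$ for each $\lambda_0$, which avoids using Beer's proposition altogether. I would mention this as a check.
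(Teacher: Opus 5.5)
Your proof is correct and follows the paper's argument: you start from Beer's characterisation, note that each tail $\{\lambda\geq\lambda_0\}$ is residual (giving $\limsup_\lambda A_\lambda\subset\bigcap_{\lambda_0}\overline{\bigcup_{\lambda\geq\lambda_0}A_\lambda}$), and observe that every residual set contains such a tail (giving the reverse inclusion). Your alternative check, working directly from the definition of cluster point, is also valid and avoids Beer's proposition.
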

\begin{proof}
	Recall that
	\[
		A\coloneqq\limsup_{\lambda\in\Lambda} A_\lambda=\bigcap_{\substack{\Lambda'\subset\Lambda \\ \text{residual}}}\overline{\bigcup_{\lambda\in\Lambda'} A_\lambda}
	\]
	and set
	\[
		A_0\coloneqq\bigcap_{\lambda_0\in\Lambda}\overline{\bigcup_{\lambda\geq\lambda_0} A_\lambda}
	\]
	Let $x\in A$ and fix $\lambda_0\in\Lambda$. If we set
	\[
		\Lambda_0\coloneqq\{\lambda\in\Lambda \mid \lambda\geq\lambda_0\}
	\]
	then $\Lambda_0$ is a residual subset of $\Lambda$.
	Hence
	\[
		x\in\overline{\bigcup_{\lambda\in\Lambda_0} A_\lambda}
	\]
	By the arbitrariness of $\lambda_0$, we obtain that $A\subset A_0$. Note that this argument also shows that if $A_0$ is empty, then so is $A$.
	
	Viceversa, let $x\in A_0$ and fix $\Lambda'\subset\Lambda$ residual. Then there exists $\lambda'\in\Lambda$ such that
	\[
		\{\lambda\in\Lambda \mid \lambda\geq\lambda'\}\subset\Lambda'
	\]
	Hence
	\[
		x\in\overline{\bigcup_{\lambda\geq\lambda'} A_\lambda}\subset\overline{\bigcup_{\lambda\in\Lambda'} A_\lambda}
	\]
	By the arbitrariness of $\Lambda'$, we obtain that $A_0\subset A$. Again, note that this argument also shows that if $A$ is empty, then so is $A_0$.
	
	Therefore, in all cases, $A=A_0$.
\end{proof}

\begin{proposition}
\label{prop:limsup1}
	Let $U\subset\C^N$ be an open subset and let $\{A_\lambda\}_{\lambda\in\Lambda}$ be a net of closed subsets of $U$. Assume that every $A_\lambda$ is a $q$-local maximum set and that $\{A_\lambda\}$ does not escape to infinity (relatively to $U$). Then
	\[
		A\coloneqq\limsup_{\lambda\in\Lambda}\,\!\!^U A_\lambda
	\]
	is a closed $q$-local maximum set.
\end{proposition}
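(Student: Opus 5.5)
The plan is to combine Lemma~\ref{lemma:characterisation_new} with Propositions~\ref{prop:union} and~\ref{prop:intersection}. For every $\lambda_0\in\Lambda$ I would set
\[
	B_{\lambda_0}\coloneqq\overline{\bigcup_{\lambda\geq\lambda_0} A_\lambda}^U .
\]
Lemma~\ref{lemma:characterisation_new}, applied in the Hausdorff space $X=U$, then gives $A=\bigcap_{\lambda_0\in\Lambda}B_{\lambda_0}$. As an intersection of closed subsets of $U$, the set $A$ is closed in $U$. This gives the closedness part of the statement.

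The first step is that each $B_{\lambda_0}$ is a $q$-local maximum set. It is the closure in $U$ of a union of closed $q$-local maximum subsets of $U$, so this follows directly from Proposition~\ref{prop:union}. For this I need each set $\{\lambda\geq\lambda_0\}$ to be non-empty. That holds because $\lambda_0\geq\lambda_0$.

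The second step checks the hypotheses of Proposition~\ref{prop:intersection} for the net $\{B_{\lambda_0}\}_{\lambda_0\in\Lambda}$.
\begin{itemize}
	\item \emph{Monotonicity.} If $\lambda_0'\geq\lambda_0$, then $\lambda\geq\lambda_0'$ implies $\lambda\geq\lambda_0$ by transitivity. Hence $\bigcup_{\lambda\geq\lambda_0'}A_\lambda\subset\bigcup_{\lambda\geq\lambda_0}A_\lambda$, and taking closures gives $B_{\lambda_0'}\subset B_{\lambda_0}$.
	\item \emph{Non-emptiness.} The intersection $\bigcap_{\lambda_0}B_{\lambda_0}=A$ must be non-empty. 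This is where the assumption that $\{A_\lambda\}$ does not escape to infinity relative to $U$ enters: by Remark~\ref{rem:limsup} it is exactly the condition $A\neq\emptyset$.
\end{itemize}
Proposition~\ref{prop:intersection} then shows that $A$ is a $q$-local maximum set.

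The main obstacle I expect is making Remark~\ref{rem:limsup} rigorous in the relative setting of $U$. Suppose $\{A_\lambda\}$ does not escape to infinity. Then there is a compact $K\subset U$ such that, for every $\lambda_0$, some $\lambda\geq\lambda_0$ satisfies $A_\lambda\cap K\neq\emptyset$. Consequently the sets $B_{\lambda_0}\cap K$ form a family of non-empty compact sets, decreasing along the directed set. Such a family has the finite intersection property: given finitely many indices, choose a common upper bound and use monotonicity. By compactness of $K$, the intersection $\bigcap_{\lambda_0}(B_{\lambda_0}\cap K)$ is non-empty, and so $A\neq\emptyset$.

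Should that remark be unavailable as stated, I would include this compactness argument directly in the proof. A degenerate case I would flag is $A=\emptyset$. This cannot occur under the hypotheses, but excluding it is needed because Proposition~\ref{prop:intersection} assumes a non-empty intersection.
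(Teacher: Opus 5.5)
Your proposal is correct and follows the paper's own proof. Like the paper, you write $A$ via Lemma~\ref{lemma:characterisation_new} as the decreasing intersection of the sets $\overline{\bigcup_{\lambda\geq\lambda_0}A_\lambda}^U$, apply Proposition~\ref{prop:union} to each of these and Proposition~\ref{prop:intersection} to their intersection, and get non-emptiness from Remark~\ref{rem:limsup}. Your finite-intersection-property argument on the non-empty compact sets $B_{\lambda_0}\cap K$ correctly proves the direction of that remark you need, which the paper only asserts.
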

\begin{proof}
	First of all, note that $A$ is non-empty by Remark~\ref{rem:limsup}. By Lemma~\ref{lemma:characterisation_new}, we have that
	\[
		A=\bigcap_{\lambda_0\in\Lambda}\overline{\bigcup_{\lambda\geq\lambda_0} A_\lambda}^U
	\]
	Note that
	\[
		\overline{\bigcup_{\lambda\geq\lambda'_0} A_\lambda}^U\subset\overline{\bigcup_{\lambda\geq\lambda_0} A_\lambda}^U \quad \text{whenever }\lambda'_0\geq\lambda_0
	\]
	Therefore, by Proposition~\ref{prop:union} and Proposition~\ref{prop:intersection}, we conclude.
\end{proof}

Now we can finally prove the main statement of this section.
\begin{proposition}
\label{prop:limsup}
	Let $\{A_\lambda\}_{\lambda\in\Lambda}$ be a net of $q$-local maximum sets in $\C^N$. For every $\lambda\in\Lambda$, let $B_\lambda\coloneqq\overline{A_\lambda}\smallsetminus A_\lambda$ and assume that $\{B_\lambda\}$ escapes to infinity but $\{A_\lambda\}$ does not. Then
	\[
		A\coloneqq\limsup_{\lambda\in\Lambda} A_\lambda
	\]
	is a closed subset of $\C^N$ with the $q$-local maximum property.
\end{proposition}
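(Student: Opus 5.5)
The idea is to reduce the statement to Proposition~\ref{prop:limsup1}. The sets $A_\lambda$ are only locally closed; their ``missing'' boundary pieces $B_\lambda$ escape to infinity. So on any fixed bounded region the $A_\lambda$ are eventually closed, and the closed-set result can be applied locally.

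First, $A$ is closed in $\C^N$, since the upper closed limit is always closed (it is an intersection of closures). It is non-empty by Remark~\ref{rem:limsup}, because $\{A_\lambda\}$ does not escape to infinity. The $q$-local maximum property is local, so it suffices to check it near each point $a\in A$. Fix $R>|a|+1$ and let $U_R\coloneqq\{|z|<R\}$. Since $\{B_\lambda\}$ escapes to infinity, Remark~\ref{rem:escaping} gives some $\lambda_R$ with $B_\lambda\cap\overline{U_R}=\emptyset$ for all $\lambda\geq\lambda_R$. For such $\lambda$ we have $\overline{A_\lambda}\cap U_R=A_\lambda\cap U_R$, so $A_\lambda\cap U_R$ is a closed subset of $U_R$. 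It is also a $q$-local maximum set, because the property is local and $U_R$ is open.

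Next, restrict the net to the residual set $\Lambda_R\coloneqq\{\lambda\geq\lambda_R\}$, which is a directed set. Consider the net $\{A_\lambda\cap U_R\}_{\lambda\in\Lambda_R}$ of closed subsets of $U_R$. Since $a$ is a cluster point of $\{A_\lambda\}$ and $\Lambda_R$ is residual, $a$ is also a cluster point of the restricted net. Because $a\in U_R$, this net does not escape to infinity relative to $U_R$: the compact set $\overline{B(a,1/2)}$ is met along a cofinal subset. Proposition~\ref{prop:limsup1} then shows that $A'_R\coloneqq\limsup\,\!\!^{U_R}_{\lambda\in\Lambda_R}(A_\lambda\cap U_R)$ is a closed $q$-local maximum subset of $U_R$.

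The remaining step is to identify $A'_R$ with $A\cap U_R$. For $x\in U_R$, neighbourhoods of $x$ can be taken inside $U_R$. The cluster-point condition along $\Lambda$ is equivalent to the condition along the residual subset $\Lambda_R$, since a cofinal subset of $\Lambda$ intersected with a residual set is cofinal. Hence $A'_R=A\cap U_R$. Therefore $A$ satisfies the $q$-local maximum condition in the neighbourhood $U_R$ of $a$, with the neighbourhoods from Definition~\ref{def:locmaxset} shrunk inside $U_R$. Since $a$ was arbitrary, $A$ is a $q$-local maximum set.

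The main obstacle I expect is this bookkeeping about restricting the directed set and localising to $U_R$: checking that the residual/cofinal notions and the relative upper limit in $U_R$ commute correctly, together with the (standard) fact that the $q$-local maximum property is inherited by open pieces and is determined locally.
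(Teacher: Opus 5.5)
Your proof is correct and follows essentially the same route as the paper. Both localise to a ball on which the $B_\lambda$ eventually vanish, pass to the residual tail so that the $A_\lambda$ become relatively closed $q$-local maximum sets there, apply Proposition~\ref{prop:limsup1}, and identify the relative upper limit with $A$ intersected with the ball. The only differences are that you do this identification directly from the cluster-point definition, and you justify non-escaping via a point $a\in A$ and the compact $\overline{B(a,1/2)}$, whereas the paper uses Lemma~\ref{lemma:characterisation_new} and the closure identity $(\star)$.
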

\begin{proof}
	Let $B_R\subset\C^N$ be an open ball of radius $R>0$ centred at the origin. To conclude, it is sufficient to show that $A\cap B_R$ is a $q$-local maximum set, provided it is non-empty. Note that the non-emptiness assumption happens if we choose $R>0$ large enough such that $\overline{B_R}$ intersects infinitely many $A_\lambda$'s, which is possible since $\{A_\lambda\}$ does not escape to infinity.
	
	By Lemma~\ref{lemma:characterisation_new}, we have that
	\[
		B_R\cap A=\bigcap_{\lambda\in\Lambda}\biggl(B_R\cap\overline{\bigcup_{\lambda'\geq\lambda} A_{\lambda'}}\biggr)
	\]
	Choose $\lambda_0$ such that $\overline{B_R}\cap B_\lambda=\emptyset$ for every $\lambda\geq\lambda_0$, which exists since $\{B_\lambda\}$ escapes to infinity. Note that
	\[
		B_R\cap A=\bigcap_{\lambda\geq\lambda_0}\biggl(B_R\cap\overline{\bigcup_{\lambda'\geq\lambda} A_{\lambda'}}\biggr)
	\]
	We claim that
	\begin{equation*}
	\label{eq:claim'} \tag{$\star$}
		B_R\cap\overline{\bigcup_{\lambda'\geq\lambda} A_{\lambda'}}=\overline{\bigcup_{\lambda'\geq\lambda} B_R\cap A_{\lambda'}}^{B_R} \quad \forall\lambda\geq\lambda_0
	\end{equation*}
	where $\overline{\cdot}^{B_R}$ denotes the closure in $B_R$. Fix $\lambda\geq\lambda_0$. Clearly, we have that
	\[
		\overline{\bigcup_{\lambda'\geq\lambda} B_R\cap A_{\lambda'}}^{B_R}=\overline{B_R\cap\bigcup_{\lambda'\geq\lambda} A_{\lambda'}}^{B_R}\subset \overline{B_R}^{B_R}\cap\overline{\bigcup_{\lambda'\geq\lambda} A_{\lambda'}}^{B_R}\subset B_R\cap\overline{\bigcup_{\lambda'\geq\lambda} A_{\lambda'}}
	\]
	Viceversa, let
	\[
		z\in B_R\cap\overline{\bigcup_{\lambda'\geq\lambda} A_{\lambda'}}
	\]
	Then there exists a sequence $\{z_n\}\subset\bigcup_{\lambda'\geq\lambda} A_{\lambda'}$ which converges to $z$. Since $z\in B_R$, then there exists $n_0$ such that $z_n\in B_R$ for every $n\geq n_0$. Hence
	\[
		z_n\in B_R\cap\bigcup_{\lambda'\geq\lambda} A_{\lambda'}=\bigcup_{\lambda'\geq\lambda} B_R\cap A_{\lambda'} \quad \forall n\geq n_0
	\]
	and thus
	\[
		z=\lim_{n\to+\infty} z_n\in\overline{\bigcup_{\lambda'\geq\lambda} B_R\cap A_{\lambda'}}^{B_R}
	\]
	Therefore \eqref{eq:claim'} holds. Thus
	\begin{equation}
	\label{eq:limsup_ball'}
		B_R\cap A=\bigcap_{\lambda\geq\lambda_0}\overline{\bigcup_{\lambda'\geq\lambda} B_R\cap A_{\lambda'}}^{B_R}=\limsup_{\lambda\geq\lambda_0}\,\!\!^{B_R} (B_R\cap A_\lambda)
	\end{equation}
	Recall that $B_R\cap B_\lambda=\emptyset$ for all $\lambda\geq\lambda_0$. Hence, for every $\lambda\geq\lambda_0$,
	\[
		B_R\cap A_\lambda=(B_R\cap A_\lambda)\cup(B_R\cap B_\lambda)=B_R\cap(A_\lambda\cup B_\lambda)=B_R\cap\overline{A_\lambda}
	\]
	and thus $A_\lambda$ is relatively closed in $B_R$. Since every $A_\lambda\cap B_R$ is a closed $q$-local maximum set in $B_R$, then $A\cap B_R$ is a $q$-local maximum set, by~\eqref{eq:limsup_ball'} and Proposition~\ref{prop:limsup1}.
\end{proof}

\section{The higher codimension case}
\label{sec:second_gen}
In this section, we prove the Main Theorem.

Following Pawlaschyk and Shcherbina's idea, which generalises Chirka's Theorem to higher codimension graphs, we want to generalise Corollary~\ref{coro:foliation2} to the case of an $n$-pseudoconcave subset of $\C^N$, for some $N>n$. To achieve this, the use of the local maximum property will be fundamental.

The idea is to reduce our problem to the case of an hypersurface, then use Corollary~\ref{coro:foliation} to obtain a foliation on it and, finally, lift this foliation to the original graph. To achieve the first step, we will use the following reduction lemma.
\begin{lemma}
\label{lemma:reduction}
    For $n\geq1$, let $D\subset\C^n\times\R$ be a domain and let $X_0\subset D$ be a closed subset. For $N>n$ and $p=N-n-1\geq0$, let $g=(g',g''):X_0\to\R\times\C^p$ be a continuous function and let
    \[
        Z_0\coloneqq\Gamma(g)=\bigl\{(z,w,\zeta)\in X_0\times i\R\times\C^p \mid \bigl(\Im(w),\zeta\bigr)=g\bigl(z,\Re(w)\bigr)\bigr\}
    \]
    be its graph in $\C^N=\C^n_z\times(\R\times i\R)_w\times\C^p_\zeta$. Choose $r\in\{0,\dots,p\}$ and let
    \[
        \pi:D\times i\R\times\C^p\to D\times i\R\times\C^r, \quad \pi(z,w,\zeta)=(z,w,\zeta_{\mu_1},\dots,\zeta_{\mu_r})
    \]    
    be the projection, for $1\leq\mu_1<\dots<\mu_r\leq p$. If $Z_0$ is a $q$-local maximum set, then $Z'_0\coloneqq\pi(Z_0)$ is a $q$-local maximum set.
\end{lemma}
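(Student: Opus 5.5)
We argue by contradiction, pulling back plurisubharmonic functions along the projection $\pi$. Suppose $Z'_0=\pi(Z_0)$ is not a $q$-local maximum set. Since $Z_0$ is the graph of $g$ over $X_0$, the set $Z'_0$ is itself the graph of the continuous function $\tilde g\coloneqq(g',g''_{\mu_1},\dots,g''_{\mu_r})$ over $X_0$. Moreover, $\pi|_{Z_0}:Z_0\to Z'_0$ is a homeomorphism, with inverse $(z,w,\zeta')\mapsto(z,w,g''(z,\Re w))$. Note also that $Z'_0$ is closed in $D\times i\R\times\C^r$: it is the graph of a continuous map on the closed set $X_0$.

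First we localise the failure. By Lemma~\ref{lemma:lms_basis'}, there is a point $z_0'\in Z'_0$ with the following property: for every small neighbourhood $V'$ of $z'_0$ there are a continuous $q$-plurisubharmonic function $\rho'$, defined on a neighbourhood $U'$ of $Z'_0\cap V'$, and a point $w'_0\in Z'_0\cap V'$, such that $\rho'(w'_0)=0$ and $\rho'<0$ on $(Z'_0\cap V')\smallsetminus\{w'_0\}$. Shrinking as in Lemma~\ref{lemma:lms_basis2} (using Lemma~\ref{lemma:bases}), we can take a compact $K'=\overline{B\times B'}$ with $w'_0\in\mathring{K}'$, with $\tilde g$ mapping the relevant closed base set into $B'$, and with $\max_{Z'_0\cap K'}\rho'>\max_{Z'_0\cap bK'}\rho'$. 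Since $Z'_0$ is closed, $Z'_0\cap K'$ is compact.

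Next we pull back. Set $\rho\coloneqq\rho'\circ\pi$ on $\pi^{-1}(U')$. Because $\pi$ is a holomorphic linear submersion, $\rho$ is $q$-plurisubharmonic: along the fibres of $\pi$ it is constant, so its complex Hessian in the smooth case is $\pi^*$ of that of $\rho'$, and its number of negative eigenvalues does not grow. For continuous $\rho'$ we use Słodkowski's characterisation, namely that $q$-plurisubharmonicity is preserved under composition with holomorphic maps, or more simply under linear projections. Now take $K\coloneqq\pi^{-1}(K')\cap\{|\zeta''|\le R\}$, where $\zeta''$ denotes the discarded coordinates and $R$ exceeds $\sup|g''|$ over the compact set $\pi^{-1}(K')\cap Z_0$. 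This supremum is finite: $\pi|_{Z_0}$ is a homeomorphism, so $\pi^{-1}(K')\cap Z_0$ is the homeomorphic image of the compact set $Z'_0\cap K'$. Then $Z_0\cap K=(\pi|_{Z_0})^{-1}(Z'_0\cap K')$ is compact and nonempty. Also $Z_0\cap bK\subset(\pi|_{Z_0})^{-1}(Z'_0\cap bK')$, because points of $Z_0\cap K$ satisfy $|\zeta''|<R$ strictly, so they meet $bK$ only over $bK'$.

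Hence
\[
\max_{Z_0\cap K}\rho=\max_{Z'_0\cap K'}\rho'>\max_{Z'_0\cap bK'}\rho'\ge\max_{Z_0\cap bK}\rho .
\]
To reach a contradiction with the $q$-local maximum property of $Z_0$ at $(\pi|_{Z_0})^{-1}(w_0')$, we need $K$ to lie in the neighbourhood $U$ provided by Definition~\ref{def:locmaxset} for that point. We arrange this by choosing $V'$ (hence $K'$) small and using continuity of $g''$: then $K$ lies within a small neighbourhood of the lifted point, apart from the $\zeta''$-direction. So we choose $R$ as $|g''(\cdot)|$ plus a small margin and centre the $\zeta''$-ball at $g''$ of the base point rather than at $0$. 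The strict inequality $|\zeta''-c|<R$ on $Z_0\cap K$ still holds once $K'$ is small enough, since the oscillation of $g''$ is then below the margin.

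The main obstacle is exactly this localisation: the lifted compact set must simultaneously be small and avoid $Z_0$ on its extra boundary faces. The continuity of $g$ combined with Lemma~\ref{lemma:bases} should handle it. The plurisubharmonicity of $\rho'\circ\pi$ for merely continuous $\rho'$ is the other point requiring a reference to Słodkowski's theory.
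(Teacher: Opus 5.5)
Your proposal is correct and is essentially the paper's argument. You pull the offending $q$-plurisubharmonic function back along $\pi$; it is constant in the discarded $\zeta$-directions, so no new negative Levi eigenvalues appear. The paper checks this for a smooth $\phi$ via the block form of the Levi matrix, which avoids your appeal to Słodkowski for continuous $\rho'$. You then thicken $K'$ by a closed ball in the discarded coordinates so that $Z_0$ meets the boundary only over $bK'$. This is exactly the paper's compact $C$ with $\pi(C)=K$ and $Z_0\cap bC=Z_0\cap\pi^{-1}(bK)$, where the paper's $K$ is your $K'$.

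Your additional localisation, which the paper leaves implicit, is fine provided you fix the neighbourhood $U$ of Definition~\ref{def:locmaxset} at the lift of the fixed point $z'_0$ from Lemma~\ref{lemma:lms_basis'}. As written you fix it at the lift of $w'_0$, which is circular because $w'_0$ moves with $V'$.
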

\begin{proof}
    Since we want to prove a local property, we can assume, at most shrinking $D$ and applying a holomorphic change of coordinates, that $\mu_j=j$ for all $j=1,\dots,r$.

    By contradiction, suppose that there exists a compact $K\subset D\times i\R\times\C^r$, a smooth $q$-plurisub\-harmonic function $\phi$, defined on a neighbourhood $U\subset D\times i\R\times\C^r$ of $K$, and a point $\eta_0\in Z'_0\cap\mathring{K}$ such that
    \begin{equation}
    \label{eq:contradiction}
        \phi(\eta_0)=\max_{Z'_0\cap K}\phi>\max_{Z'_0\cap bK}\phi
    \end{equation}
    Let us define the function
    \[
        \tilde\phi\coloneqq\phi\circ\pi:\pi^{-1}(U)=U\times\C^{p-r}\to\R
    \]
    We want to prove the $\tilde\phi$ is a smooth $q$-plurisubharmonic function. Note that
    \begin{equation}
    \label{eq:projection}
        \tilde\phi(z,w,\zeta)=\phi(z,w,\zeta_1,\dots,\zeta_r)
    \end{equation}
    and thus $\tilde\phi$ is a smooth function. Let
    \[
        \operatorname{L}_{\tilde\phi}=\biggl(\frac{\partial^2\tilde\phi}{\partial\xi_i \partial\bar{\xi}_j}\biggr)_{ij}\in\Mat_{\mathscr{C}^\infty(U\times\C^{p-r})}(N\times N)
    \]
    be the Levi matrix of $\tilde\phi$. Using~\eqref{eq:projection}, we obtain that
    \begin{equation}
    \label{eq:matrix}
        \operatorname{L}_{\tilde\phi}=
        \begin{pmatrix}
            \operatorname{L}_\phi & \vec{0}_{(n+1+r)\times(p-r)} \\
            \vec{0}_{(p-r)\times(n+1+r)} & \vec{0}_{(p-r)\times(p-r)}
        \end{pmatrix}
    \end{equation}
    where $\operatorname{L}_\phi\in\Mat_{\mathscr{C}^\infty(U)}((n+1+r)\times(n+1+r))$ is the Levi matrix of $\phi$. If $\sigma(\operatorname{L}_\phi(\xi))=\{\alpha_1,\dots,\alpha_{n+1+r}\}$ is the spectrum of $\operatorname{L}_\phi(\xi)$ (with repetitions allowed), for a fixed point $\xi\in U$, then
    \[
        \sigma(\operatorname{L}_{\tilde\phi}(\xi))=\{\alpha_1,\dots,\alpha_{n+1+r},0,\dots,0\}
    \]
    by~\eqref{eq:matrix}. Since $\phi$ is $q$-plurisubharmonic, then $\sigma(\operatorname{L}_\phi(\xi))$ has at most $q$ negative elements and thus $\sigma(\operatorname{L}_{\tilde\phi}(\xi))$ has at most $q$ negative elements. Therefore $\tilde\phi$ is a smooth $q$-plurisubharmonic function.

    Note that
    \begin{align*}
        Z_0 &= \bigl\{\bigl(x,ig'(x),g''(x)\bigr) \mid x\in X_0\bigr\} \\
        Z'_0 &= \bigl\{\bigl(x,ig'(x),g''_1(x),\dots,g''_r(x)\bigr) \mid x\in X_0\bigr\}
    \end{align*}
    Since $\eta_0\in Z'_0$, then $\eta_0=(x_0,ig'(x_0),g''_1(x_0),\dots,g''_r(x_0))$ for some $x_0\in X_0$ and thus
    \[
        \pi^{-1}(\eta_0)\cap Z_0=\{(x_0,ig'(x_0),g''(x_0)\}=\{(\eta_0,\chi_0)\}
    \]
    where $\chi_0\coloneqq(g''_{r+1}(x_0),\dots,g''_p(x_0))$. By~\eqref{eq:contradiction}, we obtain that
    \begin{equation}
    \label{eq:maxpoint}
        \tilde\phi(\eta_0,\chi_0)=\phi(\eta_0)>\phi(\eta)=\phi(\eta,\chi)
    \end{equation}
    for all $(\eta,\chi)\in\pi^{-1}(Z'_0\cap bK)$.

    Take $C\subset\C^N$ compact such that $(\eta_0,\chi_0)\in Z_0\cap\mathring{C}$, $\pi(C)=K$ and $Z_0\cap bC= Z_0\cap\pi^{-1}(bK)$. Since $Z_0\cap bC\subset\pi^{-1}(Z'_0\cap bK)$, then
    \[
        \tilde\phi(\eta_0,\chi_0)>\max_{Z_0\cap bC}\tilde\phi
    \]
    by~\eqref{eq:maxpoint}. This is a contradiction with the $q$-local maximum property of $Z_0$. Therefore $Z'_0$ is a $q$-local maximum set.
\end{proof}

Now we can state and prove our main theorem. Let $Z\subset\C^N$ be an $n$-pseudo\-concave subset, for $1\leq n<N$. Assume that for every $\xi\in Z$ there exists a neighbourhood $U\subset\C^N$ of $\xi$ and a complex linear coordinate system with respect to which $Z_0\coloneqq Z\cap U$ is the graph of a continuous function 
\[
    g=(g',g''):X_0\to \R\times\C^p
\]    
with $X_0\subset\C^n\times\R$ relatively closed subset of $\pi(U)$ (where $\pi:\C^N\to\C^n\times\R$ is the projection), $p=N-n-1\geq0$ and with the identification $\C^N=\C^n_z\times(\R\times i\R)_w\times\C^p_\zeta$. Namely,
\begin{equation}
\label{eq:locmaxsetform}
    Z_0=\Gamma(g)=\bigl\{(z,w,\zeta)\in X_0\times i\R\times\C^p \mid \bigl(\Im(w),\zeta\bigr)=g\bigl(z,\Re(w)\bigr)\bigr\}
\end{equation}
At most shrinking $U$, we can assume that $B\coloneqq\pi(U)$ is an open ball containing $X_0$.
\begin{theorem}
\label{thm:foliation'}
    Let $Z\subset\C^N$ be an $n$-pseudoconcave subset as above. Then $Z$ is foliated by $n$-dimensional complex manifolds.
\end{theorem}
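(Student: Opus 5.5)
The argument is local, so fix $\xi\in Z$, a neighbourhood $U$ and coordinates in which $Z_0=Z\cap U=\Gamma(g)$ as in \eqref{eq:locmaxsetform}, with $g=(g',g'')\colon X_0\to\R\times\C^p$ and $B=\pi(U)$ a ball. The plan has three steps: project to the hypersurface case, apply Corollary~\ref{coro:foliation} there, and lift the resulting leaves back to $Z_0$ through $g''$.

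\textbf{Step 1: reduction to the hypersurface case.} By Słodkowski's equivalence, $Z$ is an $(n-1)$-local maximum set. Apply Lemma~\ref{lemma:reduction} with $r=0$: the projection
\[
Z'_0\coloneqq\{(x,ig'(x)) \mid x\in X_0\}\subset\C^{n+1}
\]
is again an $(n-1)$-local maximum set, and it is closed in $B\times i\R$. One subtlety is that Lemma~\ref{lemma:reduction} tests maxima on compacts $K$ downstairs, so I must check that the preimage compact $C$ used in its proof can be chosen inside $U$. This holds because $g$ is continuous on the closed set $X_0$, hence locally bounded. By Słodkowski's theorem once more, $Z'_0$ is $n$-pseudoconcave in $B\times i\R$. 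Corollary~\ref{coro:foliation}, or Theorem~\ref{thm:foliation} applied after a Tietze extension of $g'$, then foliates $Z'_0$ by complex hypersurfaces. Each leaf is locally a graph $\ell=\{(z,f(z)) \mid z\in G\}$ with $f$ holomorphic.

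\textbf{Step 2: holomorphy of the lift.} Over a leaf $\ell$ put $h_j(z)\coloneqq g''_j(z,\Re f(z))$ for $j=1,\dots,p$. Each $h_j$ is continuous on $G$, and $Z_0$ contains the lift $\{(z,f(z),h(z))\}$. The key claim is that every $h_j$ is holomorphic. To prove it, apply Lemma~\ref{lemma:reduction} with $r=1$ and $\mu_1=j$: the set
\[
Z^{(j)}_0\coloneqq\{(x,ig'(x),g''_j(x)) \mid x\in X_0\}\subset\C^{n+2}
\]
is an $(n-1)$-local maximum set. The part of $Z^{(j)}_0$ over $\ell$ is the graph of $h_j$ over $\ell\cong G$, and $\ell$ is an $n$-dimensional complex manifold.

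I would show that this graph over $\ell$ is itself an $(n-1)$-local maximum set. Since $\ell$ is locally a union of leaves-pieces of $Z'_0$, I would localise by intersecting with small neighbourhoods of the form $\pi^{-1}(\text{leaf neighbourhood})$. Alternatively, I would approximate through the upper closed limits of Proposition~\ref{prop:limsup}, translating nearby pieces; this handles the fact that $Z'_0$ near $\ell$ may contain other leaves accumulating on it. I expect this localisation to be the main obstacle. Proposition~\ref{prop:limsup} and Lemma~\ref{lemma:lms_basis2} appear designed for exactly this: Lemma~\ref{lemma:lms_basis2} converts a failure of the local maximum property on the graph over a small product box into a continuous $q$-psh function with a strict interior maximum, which one then contradicts.

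Once the localisation is available, the conclusion comes from slicing by complex lines. The graph of $h_j$ over $\ell$ has real dimension $2n+2$ inside $\ell\times\C\cong\C^{n+1}$. Intersecting with $\lambda\times\C$ for complex lines $\lambda\subset\ell$ gives the graph of $h_j|_\lambda$ over a disc, and this must be a local maximum set, hence $0$-pseudoconcave. The classical Hartogs theorem (continuous graph over a domain in $\C$ with pseudoconcave graph) then forces $h_j|_\lambda$ to be holomorphic. Since this holds on every complex line in $\ell$, $h_j$ is separately holomorphic and continuous, hence holomorphic.

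\textbf{Step 3: conclusion.} The lifted leaves $\{(z,f(z),h(z)) \mid z\in G\}$ are embedded $n$-dimensional complex manifolds, pairwise disjoint because the downstairs leaves are, and their union is $Z_0$ because every point of $Z_0$ lies over some leaf of $Z'_0$. For uniqueness and the patching of the local foliations between overlapping charts, I would use the uniqueness remark after Theorem~\ref{thm:foliation}: any connected complex analytic set in $Z$ through a point lies in the corresponding leaf, since its projection does. This gives a global foliation of $Z$.
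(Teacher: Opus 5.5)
Your Steps 1 and 3 agree with the paper. Step 2 has a genuine gap, exactly where you flag it: you never establish that the graph of $h_j$ over a single leaf $\ell$ is an $(n-1)$-local maximum set, and neither route you suggest can give it.

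The local maximum property passes to relatively open subsets of $Z^{(j)}_0$. But when other leaves of $Z'_0$ accumulate on $\ell$ (the typical situation, with leaves forming one-parameter families), no open set cuts out the lift of $\ell$ alone. That lift is a closed, not relatively open, piece of a local maximum set, and such pieces need not inherit the property.

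The limit route fails as well. Proposition~\ref{prop:limsup} needs the sets $\overline{A_\lambda}\smallsetminus A_\lambda$ to escape to infinity. For slabs of leaves shrinking to $\ell$, the bounding leaves converge to $\ell$ itself. Without that hypothesis the conclusion is false: the open strips $\{|z|<1,\ |\Im z|<\epsilon\}$ are local maximum sets in $\C$, but their limit, a closed real segment, is not, as the harmonic function $-\Re(z^2)$ shows.

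The paper does not prove that statement at all. It argues by contradiction and builds a test function that isolates a slab of leaves.
\begin{enumerate}
\item If $g_{\alpha_0,j}$ fails to be holomorphic near a point, the $n$-variable Hartogs theorem and Lemma~\ref{lemma:lms_basis2} give a continuous $(n-1)$-plurisubharmonic $\rho(z,\zeta_j)$. It has strict maximum $0$ on that graph over a small box and is negative near the box's lateral boundary.
\item Extend $\rho$ trivially in $w$ to $\tilde\rho$. Uniform continuity of $g''_j$ keeps $\tilde\rho$ negative near the lateral boundary of the lifts of all nearby leaves.
\item Use the ordering of the leaves by an interval $I$ to pick $\ell_{\alpha'},\ell_{\alpha''}$ on either side of $\ell_{\alpha_0}$. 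Only one side is needed if $\alpha_0$ is an endpoint of $I$. If $I=\{\alpha_0\}$, the lift is itself a connected component and your argument goes through.
\item Take $\hat h$ holomorphic on $\Delta^n_r\times\Delta_\delta$ with $\hat h=0$ on $\ell_{\alpha'}\cup\ell_{\alpha''}$ and $\hat h=1$ on $\ell_{\alpha_0}$. This is an extension from a closed analytic subset of a Stein domain.
\item Set $\psi=\tilde\rho+\epsilon\log|\hat h|$. It is $(n-1)$-plurisubharmonic and vanishes at the interior maximum point. For small $\epsilon$ it is negative on the whole boundary of the relatively open, relatively compact piece $C$ of $Z^{(j)}_0$ lying over the leaves strictly between $\alpha'$ and $\alpha''$, because $\log|\hat h|=-\infty$ on the two bounding lifts.
\item This contradicts the local maximum property of $Z^{(j)}_0$ given by Lemma~\ref{lemma:reduction}.
\end{enumerate}

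What your Step 2 is missing is exactly this device: a plurisubharmonic function that is $-\infty$ on neighbouring leaves, together with the case analysis on $I$. Once you have it, the slicing by complex lines becomes unnecessary.
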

\begin{proof}
    Fix $\xi_0\in Z$ and take $U\subset\C^N$, $X_0\subset\C^n\times\R$ and $g:X_0\to\R\times\C^p$ such that $Z_0\coloneqq Z\cap U$ has the form \eqref{eq:locmaxsetform} and suppose that $B\coloneqq\pi(U)$ is an open ball containing $X_0$. By Tietze Extension Theorem, we can see $g$ as a continuous function on all $B$.
    
    Since $Z$ is an $(n-1)$-local maximum set and $Z_0$ is open in it, then $Z_0$ has the $(n-1)$-local maximum property. By Lemma~\ref{lemma:reduction} (with $r=0$), $Z'_0\coloneqq\Gamma(g')$ is an $(n-1)$-local maximum set, i.e., it is $n$-pseudoconcave. Thus, by Corollary~\ref{coro:foliation}, $Z'_0$ is foliated by a (unique) family $\{\ell_\alpha\}_{\alpha\in A}$ of $n$-dimensional complex manifolds.

    Let us define
    \[
        g_\alpha:\ell_\alpha\to\C^p, \quad g_\alpha=g''\circ\varpi
    \]
    where $\varpi:B\times i\R\to B$ is the projection. Recall that $\ell_\alpha=\Gamma(f_\alpha)$, where $f_\alpha:G_\alpha\to\C$ is a holomorphic function and $G_\alpha\subset\C^n$ is a contractible domain of holomorphy. Then
    \[
        \Gamma(g_\alpha)=\bigl\{\bigl(x,ig'(x),g''(x)\bigr)\in X_0\times i\R\times\C^p \mid x\in\varpi(\ell_\alpha)\subset X_0\bigr\}\subset Z_0
    \]
    where $\varpi(\ell_\alpha)=\{(z,\Re(f_\alpha(z)))\in X_0 \mid z\in G_\alpha\}$. We want to show that the $g_\alpha$'s are holomorphic functions.

    First of all, recall that 
    \[
    	Z'_0=\bigsqcup_{\alpha\in A} \ell_\alpha
    \]
    If $I$ is a connected component of $A$, then $I$ is either a single point or a non-open interval. Moreover, we have that
    \[
        Z'_{0,I}\coloneqq\bigsqcup_{\alpha\in I} \ell_\alpha \qquad Z_{0,I}\coloneqq\bigsqcup_{\alpha\in I} \Gamma(g_\alpha)
    \]
    are connected components of $Z'_0$ and $Z_0$, respectively, and thus $(n-1)$-local maximum sets.

    By contradiction, suppose that there exists $\alpha_0\in I$ such that $g_{\alpha_0}$ is not holomorphic in a neighbourhood of a point $(z_0,w_0)\in\ell_{\alpha_0}$. In particular, there exists $j\in\{1,\dots,p\}$ such that the $j^{\text{th}}$ component $g_{\alpha_0,j}$ of $g_{\alpha_0}$ is not holomorphic in a neighbourhood of $(z_0,w_0)$.

    Let us suppose, at most applying a holomorphic change of coordinates, that $\ell_{\alpha_0}$ has the form $\Delta_r^n\times\{w=0\}$ near $(z_0,w_0)$, where $\Delta_r^n\coloneqq\Delta_r^n(z_0)\subset\C^n$ is an open ball of radius $r$ centred at $z_0$. Note that we can find an open subset $G\subset\C^n$ such that $G\subset G_\alpha$ for all $\alpha\in J$, where $J\subset I$ is an open interval (in $I$) containing $\alpha_0$. At most shrinking $r$, we can assume that $\Delta_r^n\subset G$ and that $g_{\alpha_0,j}$ is not holomorphic on $\Delta_r^n\times\{0\}$. Let us identify the restriction of $g_{\alpha_0,j}$ to $\Delta_r^n\times\{0\}$ with the function
    \[
        \tilde{g}_{\alpha_0,j}:\Delta_r^n\to\C, \quad \tilde{g}_{\alpha_0,j}(z)=g_{\alpha_0,j}(z,0)
    \]
    Since $\tilde{g}_{\alpha_0,j}$ is not holomorphic, then, by a classical result due to Hartogs (see~\cite[Theorem~1.1]{pawl_shcher:foliations}), $(\Delta_r^n\times\C)\smallsetminus\Gamma(\tilde{g}_{\alpha_0,j})$ is not pseudoconvex and thus $\tilde\Gamma_{\alpha_0,j}\coloneqq\Gamma(\tilde{g}_{\alpha_0,j})$ is not an $(n-1)$-local maximum set.

    Let
    \[
        \mathcal{B}\coloneqq\{\Delta_d^n(z)\times\Delta_l(\zeta_j) \mid d,l>0, \,\, z\in\Delta_r^n, \,\, \zeta_j\in\C\}
    \]
    be a basis for the Euclidean topology of $\Delta_r^n\times\C$ given by relatively compact open sets, where $\Delta_d^n(z)\subset\Delta_r^n$ is an open ball centred at $z$ and $\Delta_l(\zeta_j)\subset\C$ is an open disc centred at $\zeta_j$. By Lemma~\ref{lemma:lms_basis2}, there exists a basic open set
    \[
        \Delta_{r'}^n\times\Delta_\sigma\coloneqq\Delta_{r'}^n(\tilde{z})\times\Delta_\sigma(\tilde\zeta_j)\in\mathcal{B}
    \]
    a continuous $(n-1)$-plurisubharmonic function $\rho:V\to\R$, where $V\subset\Delta_r^n\times\C$ is a neighbourhood of $K\coloneqq\overline{\Delta_{r'}^n\times\Delta_\sigma}$, and a point $(\bar{z},\bar\zeta_j)\in\tilde\Gamma_{\alpha_0,j}\cap\mathring{K}$ such that
    \begin{gather}
    	0=\rho(\bar{z},\bar\zeta_j)=\max_{\tilde\Gamma_{\alpha_0,j}\cap K} \rho>\max_{\tilde\Gamma_{\alpha_0,j}\cap bK} \rho \notag \\
    	\tilde{g}_{\alpha_0,j}(\overline{\Delta_{r'}^n})\subset\Delta_\sigma \label{eq:inclusion}
    \end{gather}
    Since $\rho$ is continuous, there exists a radius $r''\in(0,r')$ such that
    \begin{equation}
    \label{eq:negativity}
        \rho(z,\zeta_j)<0 \quad \forall(z,\zeta_j)\in\tilde\Gamma_{\alpha_0,j}\cap(\overline{\Delta_{r',r''}^n\times\Delta_\sigma})
    \end{equation}
    where $\Delta_{r',r''}^n\coloneqq\Delta_{r'}^n\smallsetminus\overline{\Delta_{r''}^n}$.

    Let
    \[
        \pi_j:B\times i\R\times\C^p\to B\times i\R\times\C, \quad \pi_j(z,w,\zeta)=(z,w,\zeta_j)
    \]
    be the projection and let $Z_{0,I}^j\coloneqq\pi_j(Z_{0,I})$. By Lemma~\ref{lemma:reduction} (with $r=1$), $Z_{0,I}^j$ is an $(n-1)$-local maximum set. Consider a disc $\Delta_\delta\subset\C$ centred at 0 and set
    \[
        \tilde{V}\coloneqq\{(z,w,\zeta_j)\in B\times i\R\times\C \mid (z,\zeta_j)\in V, \,\, w\in\Delta_\delta\}
    \]
    Since $\tilde{V}$ is open, then
    \[
        \tilde{Z}_{0,I}^j\coloneqq Z_{0,I}^j\cap\tilde{V}=\bigsqcup_{\alpha\in I} \Gamma_{\alpha,j}\cap\tilde{V}
    \]
    is an $(n-1)$-local maximum set, where $\Gamma_{\alpha,j}\coloneqq\Gamma(g_{\alpha,j})$. Moreover, let us set $\ell'_\alpha=\Gamma(f_\alpha|_{\Delta_{r'}^n})$ and $\Gamma'_{\alpha,j}=\Gamma(g_{\alpha,j}|_{\ell'_\alpha})$, for every $\alpha\in J$. Then
    \[
        \Gamma'_j\coloneqq\bigsqcup_{\alpha\in J}\Gamma'_{\alpha,j}\subset Z_{0,I}^j
    \]
    
    Let us define
    \[
        \tilde\rho:\tilde{V}\to\R, \quad \tilde\rho(z,w,\zeta_j)=\rho(z,\zeta_j)
    \]
    Clearly, $\tilde\rho$ is a continuous $(n-1)$-plurisubharmonic function. We want to show that $\tilde\rho(z,w,\zeta_j)<0$ if $(z,w,\zeta_j)\in\overline{\Gamma'_j\cap(\Delta_{r',r''}^n\times\Delta_\delta\times\Delta_\sigma)}$ (at most shrinking $\delta$). First of all, since $\overline{\Delta^n_{r',r''}\times\Delta_\sigma}$ is compact, then $\rho$ is uniformly continuous on it and thus, by~\eqref{eq:negativity}, there exist $t_1,t_2>0$ such that, for all $(z',\zeta'_j)\in\tilde\Gamma_{\alpha_0,j}\cap(\overline{\Delta_{r',r''}^n\times\Delta_\sigma})$, we have that
    \begin{equation}
    \label{eq:temp_negativity}
        \rho(z,\zeta_j)<0 \quad \forall(z,\zeta_j)\in\Delta^n_{t_1}(z')\times\Delta_{t_2}(\zeta'_j)
    \end{equation}
    Moreover, note that if $(z,w,\zeta_j)\in\overline{\Gamma'_j\cap(\Delta_{r',r''}^n\times\Delta_\delta\times\Delta_\sigma)}$, then, in particular, $(z,w,\zeta_j)\in Z^j_0$ and thus $\zeta_j=g''_j(z,\Re(w))$. On the other hand, $\tilde{g}_{\alpha_0,j}(z)\in\Delta_\sigma$, by~\eqref{eq:inclusion}, and thus 
    \begin{equation}
    \label{eq:temp_negativity2}
        (z,g''_j(z,0))=(z,\tilde{g}_{\alpha_0,j}(z))\in\tilde\Gamma_{\alpha_0,j}\cap(\overline{\Delta_{r',r''}^n\times\Delta_\sigma})
    \end{equation}
    Since $\overline{\Delta^n_{r',r''}\times\Delta_\delta}$ is compact, then $g''_j\circ\varpi$ is uniformly continuous on it and thus there exists $\delta'>0$ such that, for all $(z,w),(z',w')\in\overline{\Delta^n_{r',r''}\times\Delta_\delta}$ such that $\norm{(z,w)-(z',w')}<\delta'$, we have that
    \[
        |g''_j(z,\Re(w))-g''_j(z',\Re(w'))|<t_2
    \]
    If $\delta<\delta'$, then, for all $(z,w)\in\overline{\Delta^n_{r',r''}\times\Delta_\delta}$,
    \[
        \norm{(z,w)-(z,0)}=|w|<\delta'
    \]    
    and thus, for all $(z,w,\zeta_j)\in\overline{\Gamma'_j\cap(\Delta_{r',r''}^n\times\Delta_\delta\times\Delta_\sigma)}$,
    \begin{equation}
    \label{eq:temp_negativity3}
        \zeta_j=g''_j(z,\Re(w))\in\Delta_{t_2}(g''_j(z,0))
    \end{equation}
    Therefore, by~\eqref{eq:temp_negativity}, \eqref{eq:temp_negativity2} and \eqref{eq:temp_negativity3}, we obtain that (at most shrinking $\delta$)
    \begin{equation}
    \label{eq:negativity2}
        \tilde\rho(z,w,\zeta_j)<0 \quad \forall(z,w,\zeta_j)\in\overline{\Gamma'_j\cap(\Delta_{r',r''}^n\times\Delta_\delta\times\Delta_\sigma)}
    \end{equation}

    Now recall that, by~\eqref{eq:inclusion},
    \[
        g_{\alpha_0,j}(\overline{\Delta_{r'}^n\times\{w=0\}})\subset\Delta_\sigma
    \]
    Note that, by compactness of $g_{\alpha_0,j}(\overline{\Delta^n_{r'}\times\{0\}})$, there exists $t>0$ such that
    \[
        \Delta_t(g_{\alpha_0,j}(z,0))\subset\Delta_\sigma \quad \forall z\in\overline{\Delta^n_{r'}}
    \]
    Since $\overline{\Delta^n_{r'}\times\Delta_\delta}$ is compact, then $g''_j\circ\varpi$ is uniformly continuous on it and thus there exists $\delta''>0$ such that, for all $(z,w),(z',w')\in\overline{\Delta^n_{r'}\times\Delta_\delta}$ such that $\norm{(z,w)-(z',w')}<\delta''$, we have that
    \[
        |g''_j(z,\Re(w))-g''_j(z',\Re(w'))|<t
    \]
    If $\delta<\delta''$, then, for all $(z,w)\in\overline{\Delta^n_{r'}\times\Delta_\delta}$,
    \[
        \norm{(z,w)-(z,0)}=|w|<\delta''
    \]
    and thus
    \[
        g''_j(z,\Re(w))\in\Delta_t(g''_j(z,0))=\Delta_t(g_{\alpha_0,j}(z,0))\subset\Delta_\sigma
    \]
    Therefore (at most shrinking $\delta$)
    \begin{equation}
    \label{eq:localsubset}
        g''_j\circ\varpi(\overline{\Delta_{r'}^n\times\Delta_\delta})\subset\Delta_\sigma
    \end{equation}
    
    Recall that $I$ is either a single point or a non-open interval. In the former case, $I=\{\alpha_0\}$. In the latter case, $\alpha_0$ is either an interior point or a boundary point for $I$.

	\paragraph{Case {\boldmath $\{\alpha_0\}=I$}.} In this case $\Gamma(g_{\alpha_0})=Z_{0,I}$ and $\Gamma_{\alpha_0,j}=Z_{0,I}^j$ and thus they are $(n-1)$-local maximum sets. Since $\tilde\Gamma_{\alpha_0,j}$ is open in $\Gamma_{\alpha_0,j}$, then it has the $(n-1)$-local maximum property. This is a contradiction. Thus we can assume that $I$ is an interval.

	\paragraph{Case {\boldmath $\alpha_0\in\mathring{I}$}.} Since $Z'_{0,I}$ is a connected component of $Z'_0$ and $J$ is an interval, then there exist $\alpha',\alpha''\in\mathring{J}$ such that $\alpha_0\in(\alpha',\alpha'')$ and 
    \begin{equation}
    \label{eq:local_inclusion}
    	\ell'_\alpha\subset\Delta_{r'}^n\times\Delta_\delta \quad \forall\alpha\in[\alpha',\alpha'']
    \end{equation}
    By~\eqref{eq:localsubset}, we obtain that $g_{\alpha,j}(\ell'_\alpha)\subset\Delta_\sigma$ for all $\alpha\in[\alpha',\alpha'']$. In particular,
    \begin{equation}
    \label{eq:localinclusion}
        \Gamma'_{\alpha,j}\subset\Delta_{r'}^n\times\Delta_\delta\times\Delta_\sigma \quad \forall\alpha\in[\alpha',\alpha'']
    \end{equation}
    Let us define
    \[
        \Omega\coloneqq(\ell_{\alpha'}\cup\ell_{\alpha_0}\cup\ell_{\alpha''})\cap(\Delta_r^n\times\Delta_\delta)
    \]
    and let $h:\Omega\to\C$ be a holomorphic function defined by
    \[
        \begin{cases}
            h\equiv0 & \text{on $\ell_{\alpha'}\cup\ell_{\alpha''}$} \\
            h\equiv1 & \text{on $\ell_{\alpha_0}$}
        \end{cases}
    \]
    Since $\Omega$ is a closed analytic subset of a strictly pseudoconvex domain ($\Delta_r^n\times\Delta_\delta$), then there exists a holomorphic extension $\hat{h}:\Delta_r^n\times\Delta_\delta\to\C$ of $h$. Let us define
    \[
        \hat\rho:\Delta_r^n\times\Delta_\delta\to\R, \quad \hat\rho(z,w)=\log|\hat{h}(z,w)|
    \]
    Then $\hat\rho$ is a pluriharmonic function which satisfies
    \begin{equation}
    \label{eq:cases1}
        \begin{cases}
            \hat\rho\equiv-\infty & \text{on $\ell_{\alpha'}\cup\ell_{\alpha''}$} \\
            \hat\rho\equiv0 & \text{on $\ell_{\alpha_0}$}
        \end{cases}
    \end{equation}
    Now take $\epsilon>0$ and define
    \[
        \psi:\tilde{V}\to\R, \quad \psi(z,w,\zeta_j)=\tilde\rho(z,w,\zeta_j)+\epsilon\hat\rho(z,w)
    \]
    Then $\psi$ is a continuous $(n-1)$-plurisubharmonic function. By construction, we have that
    \begin{equation}
    \label{eq:max_point}
        \psi(\bar{z},0,\bar\zeta_j)=\rho(\bar{z},\bar\zeta_j)+\epsilon\hat\rho(\bar{z},0)=0
    \end{equation}
    On the other hand, if $\epsilon$ is small enough we obtain that
    \begin{equation}
    \label{eq:neg_points}
        \psi(z,w,\zeta_j)<0 \quad \forall(z,w,\zeta_j)\in\overline{\Gamma'_j\cap(\Delta_{r',r''}^n\times\Delta_\delta\times\Delta_\sigma)}\cup\Gamma'_{\alpha',j}\cup\Gamma'_{\alpha'',j}
    \end{equation}
    using \eqref{eq:negativity2} and \eqref{eq:cases1}. Let
    \[
        H\coloneqq\bigsqcup_{\alpha\in(\alpha',\alpha'')} \ell'_\alpha
    \]
    and define $C\coloneqq\Gamma((g''_j\circ\varpi)|_H)$. Equivalently,
    \[
        C=\bigsqcup_{\alpha\in(\alpha',\alpha'')} \Gamma'_{\alpha,j}
    \]
    since $(g''_j\circ\varpi)|_{\ell_\alpha}=g_{\alpha,j}$. By~\eqref{eq:localinclusion}, we have that
    \[
        C\subset Z_{0,I}^j\cap\tilde{V}=\tilde{Z}_{0,I}^j
    \]
    Note that $(\bar{z},0,\bar\zeta_j)\in C$. Indeed, $(\bar{z},0,\bar\zeta_j)\in\Gamma_{\alpha_0,j}$, since $(\bar{z},\bar\zeta_j)\in\tilde\Gamma_{\alpha_0,j}$ and $\ell_{\alpha_0}$ has the local form $\Delta_r^n\times\{w=0\}$. On the other hand, $\bar{z}\in\Delta_{r'}^n$, since $(\bar{z},\bar\zeta_j)\in\mathring{K}$. Thus $(\bar{z},0,\bar\zeta_j)\in C$. On the other hand, note that
    \[
        bC=\overline{C}\smallsetminus C=\biggl(\bigsqcup_{\alpha\in[\alpha',\alpha'']}b\Gamma'_{\alpha,j}\biggr)\cup\Gamma'_{\alpha',j}\cup\Gamma'_{\alpha'',j}
    \]
    where $b\Gamma'_{\alpha,j}=\overline{\Gamma'_{\alpha,j}}\smallsetminus\Gamma'_{\alpha,j}=\Gamma(g_{\alpha,j}|_{b\ell'_\alpha})$ and $b\ell'_\alpha=\overline{\ell'_\alpha}\smallsetminus\ell'_\alpha=\Gamma(f_\alpha|_{b\Delta_{r'}^n})$. For some $\alpha\in[\alpha',\alpha'']$, let $(z,w,\zeta_j)\in b\Gamma'_{\alpha,j}$. Then $z\in b\Delta^n_{r'}\subset\overline{\Delta^n_{r',r''}}$, $w\in\overline{\Delta_\delta}$ (by~\eqref{eq:local_inclusion}) and $\zeta_j\in\Delta_\sigma$ (by~\eqref{eq:localsubset}).
    Hence
    \begin{equation}
    \label{eq:boundary}
        bC\subset\overline{\Gamma'_j\cap(\Delta_{r',r''}^n\times\Delta_\delta\times\Delta_\sigma)}\cup\Gamma'_{\alpha',j}\cup\Gamma'_{\alpha'',j}
    \end{equation}
    Therefore
    \[
        \max_{C}\psi\geq\psi(\bar{z},0,\bar\zeta_j)=0>\max_{bC}\psi
    \]
    by~\eqref{eq:max_point}, \eqref{eq:neg_points} and \eqref{eq:boundary}. Thus $\tilde{Z}_{0,I}^j$ is not an $(n-1)$-local maximum set, since $C$ is a relatively compact and relatively open subset of $\tilde{Z}^j_{0,I}$ and $\psi$ is an $(n-1)$-plurisubharmonic function defined on a neighbourhood $\tilde{V}\subset B\times i\R\times\C$ of $\overline{C}$. This is a contradiction.

	\paragraph{Case {\boldmath $\alpha_0\in bI$}.} Without loss of generality, we can assume that $\alpha_0$ is the left endpoint of $I$. Then $\alpha_0$ is also the left endpoint of $J$. Since $Z'_{0,I}$ is a connected component of $Z'_0$, then there exists $\alpha''\in\mathring{J}$ such that $\ell'_\alpha\subset\Delta_{r'}^n\times\Delta_\delta$ for all $\alpha\in[\alpha_0,\alpha'']$. By~\eqref{eq:localsubset}, we obtain that $g_{\alpha,j}(\ell'_\alpha)\subset\Delta_\sigma$ for all $\alpha\in[\alpha_0,\alpha'']$. In particular,
    \begin{equation}
    \label{eq:localinclusion2}
        \Gamma'_{\alpha,j}\subset\Delta_{r'}^n\times\Delta_\delta\times\Delta_\sigma \quad \forall\alpha\in[\alpha_0,\alpha'']
    \end{equation}    
    Let us define
    \[
        \Omega\coloneqq(\ell_{\alpha_0}\cup\ell_{\alpha''})\cap(\Delta_r^n\times\Delta_\delta)
    \]
    and let $h:\Omega\to\C$ be a holomorphic function defined by
    \[
        \begin{cases}
            h\equiv0 & \text{on $\ell_{\alpha''}$} \\
            h\equiv1 & \text{on $\ell_{\alpha_0}$}
        \end{cases}
    \]
    Since $\Omega$ is a closed analytic subset of a strictly pseudoconvex domain ($\Delta_r^n\times\Delta_\delta$), then there exists a holomorphic extension $\hat{h}:\Delta_r^n\times\Delta_\delta\to\C$ of $h$. Let us define
    \[
        \hat\rho:\Delta_r^n\times\Delta_\delta\to\R, \quad \hat\rho(z,w)=\log|\hat{h}(z,w)|
    \]
    Then $\hat\rho$ is a pluriharmonic function which satisfies
    \begin{equation}
    \label{eq:cases2}
        \begin{cases}
            \hat\rho\equiv-\infty & \text{on $\ell_{\alpha''}$} \\
            \hat\rho\equiv0 & \text{on $\ell_{\alpha_0}$}
        \end{cases}
    \end{equation}
    Now take $\epsilon>0$ and define
    \[
        \psi:\tilde{V}\to\R, \quad \psi(z,w,\zeta_j)=\tilde\rho(z,w,\zeta_j)+\epsilon\hat\rho(z,w)
    \]
    Then $\psi$ is a continuous $(n-1)$-plurisubharmonic function. By construction, we have that
    \begin{equation}
    \label{eq:max_point2}
        \psi(\bar{z},0,\bar\zeta_j)=\rho(\bar{z},\bar\zeta_j)+\epsilon\hat\rho(\bar{z},0)=0
    \end{equation}
    On the other hand, if $\epsilon$ is small enough we obtain that
    \begin{equation}
    \label{eq:neg_points2}
        \psi(z,w,\zeta_j)<0 \quad \forall(z,w,\zeta_j)\in\overline{\Gamma'_j\cap(\Delta_{r',r''}^n\times\Delta_\delta\times\Delta_\sigma)}\cup\Gamma'_{\alpha'',j}
    \end{equation}
    using \eqref{eq:negativity2} and \eqref{eq:cases2}. Let
    \[
        H\coloneqq\bigsqcup_{\alpha\in[\alpha_0,\alpha'')} \ell'_\alpha
    \]
    and define
    \[
        C\coloneqq\Gamma\bigl((g''_j\circ\varpi)_H\bigr)=\bigsqcup_{\alpha\in[\alpha_0,\alpha'')} \Gamma'_{\alpha,j}
    \]
    By~\eqref{eq:localinclusion2}, $C\subset Z_{0,I}^j\cap\tilde{V}=\tilde{Z}_{0,I}^j$. Moreover, as above we have that $(\bar{z},0,\bar\zeta_j)\in C$ and
    \begin{equation}
    \label{eq:boundary2}
        bC=\overline{C}\smallsetminus C\subset\overline{\Gamma'_j\cap(\Delta_{r',r''}^n\times\Delta_\delta\times\Delta_\sigma)}\cup\Gamma'_{\alpha'',j}
    \end{equation}
    Thus
    \[
        \max_{C}\psi\geq\psi(\bar{z},0,\bar\zeta_j)=0>\max_{bC}\psi
    \]
    by~\eqref{eq:max_point2}, \eqref{eq:neg_points2} and \eqref{eq:boundary2}. Hence $\tilde{Z}_{0,I}^j$ is not an $(n-1)$-local maximum set, since $C$ is a relatively compact and relatively open (because $\alpha_0$ is the left endpoint of $I$) subset of $\tilde{Z}^j_{0,I}$ and $\psi$ is an $(n-1)$-plurisubharmonic function defined on a neighbourhood $\tilde{V}\subset B\times i\R\times \C$ of $\overline{C}$. This is a contradiction.

	\bigskip
    Therefore the maps $g_\alpha:\ell_\alpha\to\C^p$ are all holomorphic and $Z_0$ is foliated by the family $\{\Gamma(g_\alpha)\}_{\alpha\in A}$ of $n$-dimensional complex manifolds.
\end{proof}

\section{Some applications}
\label{sec:applications}
In this section, we gather some applications of Theorem~\ref{thm:foliation'}.

First of all, we show that we can obtain a foliation on an $n$-pseudoconcave subset of a $(2n+1)$-dimensional real submanifold of $\C^N$, of some regularity.
\begin{corollary}
\label{coro:submanifold}
    For $1\leq n<N$, let $S\subset\C^N$ be a $(2n+1)$-dimensional real $\mathscr{C}^2$ submanifold and let $Z\subset S$ be an $n$-pseudoconcave subset. Then $Z$ is foliated by $n$-dimensional complex manifolds.
\end{corollary}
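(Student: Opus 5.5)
The plan is to reduce to Theorem~\ref{thm:foliation'}. We show that near each point $\xi_0\in Z$, in suitable complex linear coordinates, $Z$ is the graph of a continuous function over a relatively closed subset of $\C^n\times\R$. The statement is local, and $n$-pseudoconcavity is equivalent to the $(n-1)$-local maximum property, which is local and invariant under complex linear changes of coordinates. So it suffices to work in a small neighbourhood of a fixed $\xi_0\in Z$.

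\textbf{Step 1: the tangent space at $\xi_0$ is of type $\C^n\times\R$.} Let $L\coloneqq T_{\xi_0}S$, translated to the origin. We claim $L$ has the $(n-1)$-local maximum property, so that Lemma~\ref{lemma:type} applies and $L$ contains an $n$-dimensional complex subspace. To see this, consider the dilations $A_t\coloneqq t^{-1}(Z-\xi_0)$ for $t\to0^+$. Each $A_t$ is an $(n-1)$-local maximum set, since dilations and translations are biholomorphisms. Define
\[
	A\coloneqq\limsup_{t\to0^+}A_t .
\]
Because $S$ is $\mathscr{C}^1$, we have $A\subset L$, and $0\in A$ since $0\in A_t$ for every $t$. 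We then apply Proposition~\ref{prop:limsup}: the sets $A_t$ are closed in $t^{-1}(V-\xi_0)$, where $V$ is a fixed ball, so their frontiers $B_t$ escape to infinity, while the $A_t$ do not. Hence $A$ is a closed $(n-1)$-local maximum set contained in $L$ and containing $0$.

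The main obstacle is to pass from $A$ to $L$ itself. Lemma~\ref{lemma:type} needs all of $L$ to be a local maximum set, whereas we only have a nonempty local maximum subset $A\subset L$. I would rerun the proof of Lemma~\ref{lemma:type} directly on $A$. Suppose the maximal complex subspace $H\subset L$ has dimension $k<n$. Slice by a complex subspace $\tilde H$ of codimension $k$ through a point $a\in A$ with $\tilde H\cap H=\{0\}$. Then $A\cap\tilde H$ is a local maximum set contained in the totally real space $L\cap\tilde H$, of dimension $2(n-k)+1$. The function $u=-\sum x_j^2$ from that proof, recentred at $a$, is pluriharmonic and has a strict maximum on $L\cap\tilde H$ at $a$, so it has a strict local maximum on $A\cap\tilde H$ at $a$. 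This is a contradiction. One can even skip the limit: in Lemma~\ref{lemma:type} a totally real slice of a local maximum set already suffices. So the claim holds for any nonempty local maximum subset of $L$.

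\textbf{Step 2: apply the Main Theorem.} By Step 1 we may choose complex linear coordinates $\C^N=\C^n_z\times(\R\times i\R)_w\times\C^p_\zeta$ with $T_{\xi_0}S=\C^n\times\R\times\{0\}$. Since $T_{\xi_0}S$ projects isomorphically onto $\C^n\times\R$, the implicit function theorem makes $S$, near $\xi_0$, the graph of a $\mathscr{C}^2$ (in particular continuous) function $G=(G',G'')$ defined on an open set $B\subset\C^n\times\R$, with values in $\R\times\C^p$. Let $\pi:\C^N\to\C^n\times\R$ be the projection. Then $Z\cap U=\Gamma(G|_{X_0})$ with $X_0\coloneqq\pi(Z\cap U)$, and $X_0$ is relatively closed in $\pi(U)$ because $\pi|_S$ is a homeomorphism onto its image. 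Theorem~\ref{thm:foliation'} now yields that $Z$ is foliated by $n$-dimensional complex manifolds. The leaves obtained in different charts agree on overlaps by the uniqueness discussed after Theorem~\ref{thm:foliation}, and they glue into a global decomposition.
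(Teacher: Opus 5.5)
Your proof is correct, but Step 1 takes a different route from the paper's proof of this corollary. The paper uses the $\mathscr{C}^2$ hypothesis directly. It writes $S\cap U=\{r_1=\dots=r_k=0\}$ with $k=2N-2n-1$ and sets $\phi=r_1^2+\dots+r_k^2$. At points of $S$, the complex Hessian of $\phi$ is $2\sum_l \partial r_l\otimes\overline{\partial r_l}$, which is positive semidefinite with kernel $H_\xi S$. If $\dim_\C H_{\xi_0}S<n$, this form has at least $N-n+1$ positive eigenvalues, and by continuity keeps them near $\xi_0$, so $\phi$ is strictly $(n-1)$-plurisubharmonic there. Adding a small nonnegative bump $u$ peaking at $\xi_0$ preserves $(n-1)$-plurisubharmonicity. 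Since $\phi\equiv0$ on $S\supset Z$, the function $\phi+u$ then has a strict maximum at $\xi_0$ on $Z\cap U$, contradicting the $(n-1)$-local maximum property. Your Step 2 then matches the paper's conclusion.

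Your Step 1 is essentially the blow-up argument the paper reserves for Corollary~\ref{coro:diff_function}. There the paper passes from the contingent cone $A$ to $L$ by writing $L=\bigcup_{v\in L}(v+A)$ and invoking Proposition~\ref{prop:union}, so that Lemma~\ref{lemma:type} applies verbatim. You instead rerun the slicing argument on $A$ itself. This is equally valid: $a\in A\subset L$ and $L\cap\tilde H$ is totally real, so the recentred quadratic pluriharmonic function peaks strictly at $a$.

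The trade-off is as follows. The paper's argument is short and avoids the limit machinery of Section~\ref{sec:limit}, but it genuinely needs $\mathscr{C}^2$ to define $\phi$, compute its Levi form and perturb it. Yours only uses that $S$ is $\mathscr{C}^1$, to get $A\subset T_{\xi_0}S$ and a continuous graph, so it proves the $\mathscr{C}^1$ version (cf. Remark~\ref{rem:C1_submanifold}).

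Your aside that the limit can be skipped is also right, but it needs a first-order estimate you did not state. Take $\tilde H$ through $\xi_0$ and $y\in S\cap\tilde H$ near $\xi_0$. Then $y-\xi_0$ lies within $o(|y-\xi_0|)$ of $L\cap(\tilde H-\xi_0)$, because the distance to an intersection of two linear subspaces is controlled by the sum of the distances to each. Hence the same quadratic pluriharmonic function has a strict local maximum at $\xi_0$ on $Z\cap\tilde H$, which gives the most elementary route of all.

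Finally, your gluing sentence is unnecessary. Theorem~\ref{thm:foliation'} is already stated for sets that are only locally graphs, and the uniqueness remark you cite concerns only the hypersurface case.
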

\begin{proof}
    Fix $\xi_0\in Z\subset S$. Since $S$ is a $(2n+1)$-dimensional $\mathscr{C}^2$ real submanifold of $\C^N$, then there exists a neighbourhood $U\subset\C^N$ of $\xi_0$ and $\mathscr{C}^2$ functions $r_1,\dots,r_k:U\to\R$ (where $k=2N-2n-1>0$) such that
    \[
        S\cap U=\{r_1=\dots=r_k=0\}
    \]
    Let us define $\phi\coloneqq r_1^2+\dots+r_k^2:U\to\R$. Then $\phi$ is a $\mathscr{C}^2$ function. Note that
    \[
        \frac{\partial^2\phi}{\partial\xi_i\partial\bar\xi_j}=\sum_{l=1}^k2\frac{\partial r_l}{\partial\xi_i}\frac{\partial r_l}{\partial\bar\xi_j}+2r_l\frac{\partial^2 r_l}{\partial\xi_i\partial\bar\xi_j}
    \]
    and then, for all $\xi\in S\cap U$,
    \[
        \frac{\partial^2\phi}{\partial\xi_i\partial\bar\xi_j}(\xi)=2\sum_{l=1}^k\frac{\partial r_l}{\partial\xi_i}(\xi)\frac{\partial r_l}{\partial\bar\xi_j}(\xi)
    \]
    Thus
    \[
    	(\ddc\phi)_\xi=2\sum_{l=1}^k (\d r_l\wedge\dc r_l)_\xi
    \]
    is a positive semidefinite form. In particular, $(\ddc\phi)_\xi$ is null on $H_\xi S$ and positive definite on $H_\xi S^\perp=\C^N/H_\xi S$, where $H_\xi S$ denotes the holomorphic tangent space to $S$ at $\xi$. 
    
    Suppose that $\dim_\C(H_\xi S)<n$. Let $\operatorname{L}_\phi$ be the Levi matrix of $\phi$ and
    \[
        \sigma(\operatorname{L}_\phi(\xi))=\{\alpha_1(\xi),\dots,\alpha_N(\xi)\}
    \]
    be its relative spectrum in $\xi$ (with repetitions allowed). By the assumption,
    \[
        \#\bigl\{j\in\{1,\dots,N\} \mid \alpha_j(\xi)=0\bigr\}\leq n-1
    \]
    Let
    \[
        J_\xi\coloneqq\bigl\{j\in\{1,\dots,N\} \mid \alpha_j(\xi)>0\bigr\}
    \]
    Then $\# J_\xi\geq N-n+1$. Note that the $\alpha_j$'s are continuous functions in $\xi$, since the entries of $\operatorname{L}_\phi$ are continuous. Hence there exists an open neighbourhood $V_\xi\subset\C^N$ of $\xi$ such that $\alpha_j(\xi')>0$ for all $\xi'\in V_\xi$ and for all $j\in J_\xi$. Let
    \[
        V\coloneqq\bigcup_{\xi\in S\cap U} V_\xi
    \]
    Then $V\subset\C^N$ is an open neighbourhood of $S\cap U$. At most shrinking $U$, we can assume that $V=U$. By construction, we have that, for every $\xi'\in U$, $\alpha_j(\xi')>0$ for at least $N-n+1$ indices. Hence $\phi$ is strictly $(n-1)$-plurisubharmonic on $U$. Then we can find a non-negative function $u\in\mathscr{C}^\infty_c(U)$ such that
    \[
        u(\xi_0)>u(\xi) \quad \forall\xi\in(Z\cap U)\smallsetminus\{\xi_0\} 
    \]
    and $\phi+u$ is still $(n-1)$-plurisubharmonic on $U$. Since $\phi|_{S\cap U}\equiv0$, then 
    \[
        (\phi+u)(\xi_0)>(\phi+u)(\xi) \quad \forall\xi\in(Z\cap U)\smallsetminus\{\xi_0\}
    \]
    and thus $Z\cap U$ is not an $(n-1)$-local maximum set. This is a contradiction. Therefore $\dim_\C(H_\xi S)=n$, for all $\xi\in S\cap U$.
    
    In particular, there exists a complex linear coordinate system in $\C^N$ with respect to which
    \[
        T_{\xi_0}S=\C^n\times\R
    \]
    Hence there exists an open ball $B\subset\C^n\times\R$ (centred at the origin), an open neighbourhood $U'\subset\C^N$ of $\xi_0$ and a $\mathscr{C}^2$ function $g:B\to\R\times\C^p$ (where $p=N-n-1\geq0$) such that $S\cap U'=\Gamma(g)$. In particular, $Z_0\coloneqq Z\cap U'$ is the graph of $g|_{X_0}$, for some $X_0\subset B$ closed. Therefore, by Theorem~\ref{thm:foliation'}, $Z$ is foliated by $n$-dimensional complex manifolds.
\end{proof}

Furthermore, we prove a statement analogous to Corollary~\ref{coro:submanifold}, but under weaker hypotheses of regularity on $S$. To achieve this, we will use some of the results obtained in Section~\ref{sec:limit}.

Let $S\subset\C^N$ be a subset such that for each $\xi_0\in S$ there exists a neighbourhood $W\subset S$ of $\xi_0$ and a real linear coordinate system with respect to which $W$ is the graph of a differentiable function $g:B\to\R^q$, with the identification $\C^N=\R^{2N}=\R^{2n+1}_x\times\R^q_y$ (for $1\leq n<N$ and $q=2N-2n-1>0$), where $\pi:\C^N\to\R^{2n+1}$ is the projection and where $B\subset\R^{2n+1}$ is an open ball centred at $x_0=\pi(\xi_0)$. For each $\xi=(x,g(x))\in W$, set
\[
	T_\xi W\coloneqq\Gamma\bigl(Dg(x)\bigr)
\]
where $Dg(x):\R^{2n+1}\to\R^q$ denotes the differential of $g$ at $x$. At most shrinking $W$, assume that
\begin{equation}
\label{eq:perpendicular}
	\#\bigl\{\xi\in W \mid T_\xi W\cap(T_{\xi_0}W)^\perp\neq\{0\}\bigr\}<+\infty
\end{equation}
\begin{remark}
\label{rem:C1_submanifold}
	If $S\subset\C^N$ is a $(2n+1)$-dimensional real $\mathscr{C}^1$ submanifold, then it satisfies all these properties.
\end{remark}
\begin{corollary}
\label{coro:diff_function}
	Let $S\subset\C^N$ be a subset as above. If $Z\subset S$ is an $n$-pseudoconcave subset (for $1\leq n<N$), then it is foliated by $n$-dimensional complex manifolds.
\end{corollary}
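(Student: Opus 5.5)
The plan is to reduce Corollary~\ref{coro:diff_function} to Theorem~\ref{thm:foliation'}. It suffices to show that, near every point $\xi_0\in Z$, the set $Z$ is the graph of a continuous function over a closed subset of $\C^n\times\R$ in some complex linear coordinate system. The regularity of $S$ only matters through its tangent spaces. The key step is to show that, at every point $\xi\in Z$, the tangent space $T_\xi W$ is a real subspace of type $\C^n\times\R$. Once this holds at $\xi_0$, I choose complex linear coordinates with $T_{\xi_0}W=\C^n\times\R$. Then $W$, near $\xi_0$, is the graph of a continuous function over an open set of $\C^n\times\R$. This follows because $g$ is differentiable, and condition~\eqref{eq:perpendicular} guarantees that the orthogonal projection onto $T_{\xi_0}W$ restricted to $W$ is locally injective, with continuous inverse, near $\xi_0$ (shrinking $W$ to avoid the finitely many bad points other than possibly $\xi_0$). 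Hence $Z\cap W$ is locally a continuous graph over a closed subset of $\C^n\times\R$, and Theorem~\ref{thm:foliation'} applies.

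To prove that $T_\xi W$ is of type $\C^n\times\R$ for $\xi\in Z$, I blow up at $\xi$. For $t>0$, set $A_t\coloneqq\{\xi+(\eta-\xi)/t\mid\eta\in Z\cap W\}$. These are affine images of an $(n-1)$-local maximum set under complex affine maps, so each $A_t$ is an $(n-1)$-local maximum set. Since $W$ is the graph of $g$, which is differentiable at $x=\pi(\xi)$, the dilates of $W$ converge locally uniformly (in the Hausdorff sense on compacts) to the affine space $\xi+T_\xi W$. So $\limsup_{t\to0^+}A_t$ is contained in $\xi+T_\xi W$ and contains $\xi$. The boundaries $\overline{A_t}\smallsetminus A_t$ come from the boundary of $W$ and escape to infinity, while $\xi\in A_t$ for all $t$, so Proposition~\ref{prop:limsup} shows that $A\coloneqq\limsup_t A_t$ is a closed $(n-1)$-local maximum set inside $\xi+T_\xi W$.

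The main obstacle is passing from $A$ to the whole subspace $T_\xi W$, in order to apply Lemma~\ref{lemma:type}. The limit $A$ is a tangent cone of $Z$, not necessarily the full tangent space. I would instead rerun the argument of Lemma~\ref{lemma:type} directly on $A$. Suppose the maximal complex subspace $H$ of $T_\xi W$ has dimension $k<n$. Slice by a complex subspace $\tilde H$ of codimension $k\le n-1$ transversal to $H$ through $\xi$. The slice $A\cap(\xi+\tilde H)$ is then a local maximum set containing $\xi$ and lying in the totally real space $\xi+\tilde L$. On that space the pluriharmonic function $u=\Re f$ built in Lemma~\ref{lemma:type} equals $-|x|^2$, which has a strict maximum at $\xi$. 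This contradicts the local maximum property, so $k=n$ and $T_\xi W$ is of type $\C^n\times\R$.

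If the slicing step causes trouble, for example because $A\cap(\xi+\tilde H)$ might be only $\{\xi\}$ in a degenerate way, I would instead note that a local maximum set cannot contain isolated points. The strict-maximum argument works for any nonempty closed subset of $\xi+\tilde L$, so it yields the contradiction either way.
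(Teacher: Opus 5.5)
Your proof is correct and has the same overall structure as the paper's. You blow up $Z\cap W$ at the point and use Proposition~\ref{prop:limsup} to get a closed $(n-1)$-local maximum cone $A\subset\xi_0+T_{\xi_0}W$. You then show that $T_{\xi_0}W$ is of type $\C^n\times\R$, rewrite $W$ near $\xi_0$ as a continuous graph over $T_{\xi_0}W$, and apply Theorem~\ref{thm:foliation'}. Your direct derivation of $\limsup_t A_t\subset\xi+T_\xi W$ from the differentiability of $g$ replaces the paper's citation of Bigolin--Greco and is fine.

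The real difference is at the step you flag as the main obstacle. The paper does not reopen Lemma~\ref{lemma:type}. Since $0\in T^+_{\xi_0}Z_0\subset T_{\xi_0}W$, it writes $T_{\xi_0}W=\bigcup_{v\in T_{\xi_0}W}(v+T^+_{\xi_0}Z_0)$, a union of translates of an $(n-1)$-local maximum set. By Proposition~\ref{prop:union}, $T_{\xi_0}W$ itself then has the $(n-1)$-local maximum property, and Lemma~\ref{lemma:type} applies verbatim.

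You instead slice the cone $A$ directly. You observe that the proof of Lemma~\ref{lemma:type} only uses some local maximum set through the base point lying inside the totally real space $\xi+\tilde L$. There $\Re f$ restricts to $-\lvert x\rvert^2$, so no nonempty local maximum set can lie in a totally real affine subspace. Hence the size of the slice does not matter, and your fallback remark is exactly right. Your version saves one application of Proposition~\ref{prop:union}. The paper's version keeps Lemma~\ref{lemma:type} as a black box and records the slightly stronger fact that the whole tangent space inherits the local maximum property.

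One step needs an explicit reference. Condition~\eqref{eq:perpendicular} only tells you that $D(\pi_1\circ G)$ is invertible near $x_0$ after shrinking. Note that $\xi_0$ itself is never a bad point, since $T_{\xi_0}W\cap(T_{\xi_0}W)^\perp=\{0\}$. Because $g$ is merely differentiable, not $\mathscr{C}^1$, local injectivity with continuous inverse does not follow from the classical inverse function theorem. You need the inverse function theorem for everywhere differentiable maps with nonsingular derivative, which is what the paper invokes via~\cite[Theorem~1.3]{li:implicit}. With that citation your graph step is complete.
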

\begin{proof}
	Fix $\xi_0\in Z\subset S$. By assumption, there exists an open neighbourhood $W\subset S$ of $\xi_0$ and a differentiable function $g:B\to\R^q$ such that $W=\Gamma(g)$, where $B\subset\R^{2n+1}$ is an open ball centred at $x_0=\pi(\xi_0)$.
	
	By construction, we have that $W=S\cap(B\times\R^q)$ and thus
	\[
		Z_0\coloneqq W\cap Z=(B\times\R^q)\cap Z
	\]
	is relatively open in $Z$. In particular, $Z_0$ is an $(n-1)$-local maximum set.
	
	Consider the set $\Lambda=\{\lambda\in\R \mid \lambda>0\}$ and define on $\Lambda$ the preorder
	\[
		\lambda\preceq\lambda' \iff \lambda'\leq\lambda
	\]
	Then $(\Lambda,\preceq)$ is a directed set. For each $\lambda\in\Lambda$, let us define
	\[
		Z_{0,\lambda}\coloneqq\frac{1}{\lambda}(Z_0-\xi_0)
	\]
	Since $Z_0$ is an $(n-1)$-local maximum set, then so is $Z_{0,\lambda}$. Therefore $\{Z_{0,\lambda}\}_{\lambda\in\Lambda}$ is a net of $(n-1)$-local maximum sets in $\C^N$.
	
	Note that $0\in Z_{0,\lambda}$ for all $\lambda\in\Lambda$ and thus $0\in\limsup_\lambda Z_{0,\lambda}$. In particular, $\{Z_{0,\lambda}\}$ does not escape to infinity, by Remark~\ref{rem:limsup}. Now, for every $\lambda\in\Lambda$, let us define $Y_\lambda\coloneqq\overline{Z_{0,\lambda}}\smallsetminus Z_{0,\lambda}$. We want to prove that $\{Y_\lambda\}$ escapes to infinity. First of all, set $Y\coloneqq\overline{Z_0}\smallsetminus Z_0$ and note that
	\[
		Y_\lambda=\frac{1}{\lambda}(Y-\xi_0)
	\]
	Since $Z_0\subset B\times\R^q$ is closed and contained in $\Gamma(g)$, then $Y\subset bB\times\R^q$. Note that, if $r>0$ is the radius of the ball $B\subset\R^{2n+1}$, then $\norm{\tilde{z}-\xi_0}\geq r$ for all $\tilde{z}\in bB\times\R^q$. Hence
	\[
		\norm{z}=\frac{1}{\lambda}\norm{\tilde{z}-\xi_0}\geq\frac{r}{\lambda} \quad \forall z=\frac{1}{\lambda}(\tilde{z}-\xi_0)\in Y_\lambda
	\]
	and thus
	\[
		\lim_{\lambda\to0^+} \operatorname{dist}(0,Y_\lambda)=+\infty
	\]
	Therefore $\{Y_\lambda\}$ escapes to infinity, by Remark~\ref{rem:escaping}.
	
	Thus the contingent cone to $Z_0$ at $\xi_0$
	\[
		T^+_{\xi_0}Z_0\coloneqq\limsup_{\lambda\in\Lambda}Z_{0,\lambda}
	\]
	is a closed $(n-1)$-local maximum set, by Proposition~\ref{prop:limsup}. On the other hand, we can consider the contingent cone to $W$ at $\xi_0$
	\[
		T^+_{\xi_0}W\coloneqq\limsup_{\lambda\in\Lambda} \frac{1}{\lambda}(W-\xi_0)
	\]
	Since $Z_0\subset W$, then $T_{\xi_0}^+Z_0\subset T_{\xi_0}^+W$. Moreover, $T_{\xi_0}^+W\subset T_{\xi_0}W$, by~\cite[Proposition~2.3]{bigo_greco:characterizations}. Hence $T_{\xi_0}^+Z_0\subset T_{\xi_0}W$ and thus
	\[
		T_{\xi_0}W=\bigcup_{v\in T_{\xi_0}W} (v+T_{\xi_0}^+Z_0)
	\]
	Therefore $T_{\xi_0}W$ has the $(n-1)$-local maximum property. By Lemma~\ref{lemma:type}, we obtain that
	\[
		T_{\xi_0}W=\C^n\times\R
	\]
	In particular, we also obtain that $(T_{\xi_0}W)^\perp=\R\times\C^p$, for $p=N-n-1\geq 0$.
	
	Let $\pi_1:\C^N\to T_{\xi_0}W$ and $\pi_2:\C^N\to (T_{\xi_0}W)^\perp$ be two orthogonal projections, and let $\varpi_1\coloneqq\pi_1|_W$ and $\varpi_2\coloneqq\pi_2|_W$. Moreover, consider the homeomorphism
	\[
		G:B\to W, \quad G(x)=\bigl(x,g(x)\bigr)
	\]
	Note that both $G$ and $G^{-1}$ are differentiable maps, since $g$ is so. Let us define
	\[
		F:B\to T_{\xi_0}W, \quad F\coloneqq\varpi_1\circ G
	\]
	Then $F$ is a differentiable map. Note that, for each $x\in B$, $DF(x)=D\varpi_1(\xi)\circ DG(x)$, where $\xi=G(x)$. Since both $G$ and $G^{-1}$ are differentiable maps, then $DG(x)$ is invertible. On the other hand, we have that $D\varpi_1(\xi)=\pi_1|_{T_\xi W}$. By~\eqref{eq:perpendicular}, we obtain that $D\varpi_1$ has at most finitely many critical points. Hence also $DF$ has at most finitely many critical points. Therefore, by a version of the Inverse Function Theorem for differentiable maps (see~\cite[Theorem~1.3]{li:implicit}), there exists a neighbourhood $U\subset B$ of $x_0$ and a neighbourhood $V\subset T_{\xi_0}W$ of $0$ such that $F|_U:U\to V$ is a homeomorphism. Set $H\coloneqq(F|_U)^{-1}:V\to U$.
	
	Let us define
	\[
		f:V\to(T_{\xi_0}W)^\perp, \quad f\coloneqq\varpi_2\circ G\circ H
	\]
	Then $f$ is continuous. Set $\tilde{W}\coloneqq G(U)$. Then $\tilde{W}$ is an open subset of $W$. Clearly, for each $\xi\in\tilde{W}$ we have that $\xi=(\varpi_1(\xi),\varpi_2(\xi))$ and, moreover, there exists a unique $x\in U$ such that $\xi=G(x)$. Note that
	\[
		\varpi_1(\xi)=\varpi_1\bigl(G(x)\bigr)=F(x)=\nu
	\]
	for a unique $\nu\in V$. On the other hand,
	\[
		\varpi_2(\xi)=\varpi_2\bigl(G(x)\bigr)=\varpi_2\bigl(G(H(\nu))\bigr)=f(\nu)
	\]
	Thus $\xi=(\nu,f(\nu))$ for a unique $\nu\in V$. Therefore $\tilde{W}=\Gamma(f)$.
	
	Hence we have obtained a continuous function $f:V\to\R\times\C^p$, where $V\subset\C^n\times\R$ is an open ball centred at the origin, such that $\tilde{W}=\Gamma(f)$. In particular, $\tilde{Z}_0\coloneqq Z_0\cap\tilde{W}$ is the graph of $f|_{X_0}$, for some $X_0\subset V$ closed. Therefore, by Theorem~\ref{thm:foliation'}, $Z$ is foliated by $n$-dimensional complex manifolds.
\end{proof}

We conclude with an immediate consequence of Corollary~\ref{coro:diff_function}. Before stating it, let us recall the notion of Zariski tangent space in the $\mathscr{C}^k$ category (for $k\geq 1$).
\begin{definition}
	Let $M$ be a $\mathscr{C}^k$ manifold (for $k\geq 1$) and let $Z\subset M$ be an arbitrary subset. Let
	\[
		\mathcal{I}_Z^k\coloneqq\{f\in\mathscr{C}^k(M) \mid f|_Z\equiv 0\}
	\]
	For each $p\in Z$, the set
	\[
		T_p^kZ\coloneqq\{v\in T_pM \mid \d_pf(v)=0 \ \ \forall f\in\mathcal{I}_Z^k\}
	\]
	is called \emph{tangent space to $Z$ at $p$}, of class $\mathscr{C}^k$.
\end{definition}
\begin{remark}
\label{rem:loc_submanifold}
	It is straightforward to prove that the dimension of $T_p^kZ$ is the minimum $n\in\N$ such that $Z$ is locally contained, near $p$, in an embedded $n$-dimensional $\mathscr{C}^k$ submanifold $N$ of $M$ and that, in this case, $T_p^kZ=T_pN$ (see~\cite[Proposition~2.6(c)]{ara_mon:core}).
\end{remark}
By Remark~\ref{rem:loc_submanifold} and Corollary~\ref{coro:diff_function}, we immediately obtain the following (recall also Remark~\ref{rem:C1_submanifold}).
\begin{corollary}
	Let $Z\subset\C^N$ be an $n$-pseudoconcave subset (for $1\leq n<N$) such that
	\[
		\dim_\R(T_p^kZ)=2n+1 \quad \forall p\in Z
	\]
	(for $k\geq 1$). Then $Z$ is foliated by $n$-dimensional complex manifolds.
\end{corollary}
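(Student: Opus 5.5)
The statement follows by reducing it to Corollary~\ref{coro:diff_function}. The work is to show that $Z$ lies locally in a set $S$ of the type required there. Fix $p\in Z$. By Remark~\ref{rem:loc_submanifold}, since $\dim_\R(T_p^kZ)=2n+1$, there is a neighbourhood $U_p\subset\C^N$ of $p$ and an embedded $(2n+1)$-dimensional real $\mathscr{C}^k$ submanifold $S_p\subset\C^N$ with $Z\cap U_p\subset S_p$ and $T_pS_p=T_p^kZ$. Because $k\geq1$, $S_p$ is in particular a $\mathscr{C}^1$ submanifold. By Remark~\ref{rem:C1_submanifold}, it therefore satisfies all the hypotheses imposed on $S$ before Corollary~\ref{coro:diff_function}.

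For completeness, I would check those hypotheses directly. After shrinking, $S_p$ is, in suitable real linear coordinates, the graph of a $\mathscr{C}^1$ function over a ball in $\R^{2n+1}=T_pS_p$, and this function is in particular differentiable. Condition~\eqref{eq:perpendicular} holds once the ball is small. Indeed, $T_\xi S_p$ depends continuously on $\xi$, so for $\xi$ near $p$ the space $T_\xi S_p$ is close to $T_pS_p$. Since $T_\xi S_p$ is a graph over $T_pS_p$, it meets $(T_pS_p)^\perp$ only in $\{0\}$, and so the set appearing in~\eqref{eq:perpendicular} is empty.

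Next I would localise the pseudoconcavity. $Z\cap U_p$ is a relatively closed subset of $S_p\cap U_p$. Pseudoconcavity is local, being equivalent to the $(n-1)$-local maximum property by Słodkowski's theorem, and that property passes to relatively open pieces. Hence $Z\cap U_p$ is an $n$-pseudoconcave subset of $U_p$ contained in $S_p\cap U_p$. Applying Corollary~\ref{coro:diff_function} with $S=S_p\cap U_p$ and $\C^N$ replaced by $U_p$, which is harmless since its proof is purely local, shows that $Z\cap U_p$ is foliated by $n$-dimensional complex manifolds.

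Finally, the local foliations have to be glued into a global one. Here I would use the uniqueness statement: each local foliation comes from Theorem~\ref{thm:foliation'}, hence ultimately from Corollary~\ref{coro:foliation} and the uniqueness of Chirka-type leaves. So on overlaps $U_p\cap U_{p'}$ the leaves must agree, because two connected $n$-dimensional complex manifolds contained in $Z$ that meet lie in a common leaf. Taking maximal connected unions of local leaves then gives a partition of $Z$ into embedded $n$-dimensional complex manifolds.

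I expect this gluing to be the only mildly delicate point. Since the paper's notion of \emph{foliated} is just a disjoint union of embedded submanifolds, with no transversal structure required, and since local uniqueness holds, it should be routine.
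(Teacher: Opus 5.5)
Your proposal is correct and is the paper's argument: Remark~\ref{rem:loc_submanifold} puts $Z$ locally inside a $(2n+1)$-dimensional $\mathscr{C}^k$ submanifold, which satisfies the standing hypotheses on $S$ by Remark~\ref{rem:C1_submanifold} (your direct check of~\eqref{eq:perpendicular} is fine), and Corollary~\ref{coro:diff_function} then applies. The only difference is your separate gluing step, which the paper does not need: the proof of Corollary~\ref{coro:diff_function} uses $S$ only near each point of $Z$, so it verifies the purely local hypothesis of Theorem~\ref{thm:foliation'} for $Z$ itself, and that theorem gives the global conclusion. Your gluing step also overclaims as written, since local uniqueness only makes the maximal unions of plaques injectively immersed, not automatically embedded, so it is better to let Theorem~\ref{thm:foliation'} carry the local-to-global step.
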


\section{Sharpness of the index of pseudoconcavity}
\label{sec:sharpness}
In this final section, we show that the choice for the index of pseudoconcavity in Theorem~\ref{thm:foliation'} is sharp. Namely, for $m<n$ we construct an $m$-pseudoconcave subset of a smooth graph, defined over $\C^n\times\R$, which has no analytic structure.

To this end, let $f:\C^n\to\C^q$ be a holomorphic function, for $n\geq 2$ and $q\geq 1$, and let $\pi:\C^{n+q}\to\C^n$ be the projection onto the first $n$ coordinates. Then $\tilde\pi\coloneqq\pi|_{\Gamma(f)}$ is a biholomorphism. Now let $E\subset\C^n$ be the closed subset given by~\cite[Theorem~1.1]{har_shch_tom:wermer}. Then $E$ has no analytic structure and $\C^n\smallsetminus E$ is pseudoconvex, i.e., $E$ is an $(n-1)$-pseudoconcave subset of $\C^n$. Let
\[
	Z\coloneqq\Gamma(f|_E)
\]
Then $Z$ is clearly a closed subset of $\C^{n+q}$.
\begin{lemma}
	The subset $Z$ has no analytic structure.
\end{lemma}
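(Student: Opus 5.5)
We argue by contradiction, transporting any analytic structure of $Z$ down to $E$ through the projection $\pi$. Suppose $Z$ contains an analytic variety of positive dimension. Then there exist a point $p\in Z$ and a non-constant holomorphic map $\gamma:\Delta\to\C^{n+q}$ from the unit disc with $\gamma(0)=p$ and $\gamma(\Delta)\subset Z$. Such a disc can be obtained, for instance, from a local parametrisation of a one-dimensional irreducible component through a regular point of the variety, after possibly moving $p$ to a nearby regular point.

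The key step is to compose with $\pi$. Since $Z=\Gamma(f|_E)\subset\Gamma(f)$, we have $\gamma(\Delta)\subset\Gamma(f)$. The restriction $\tilde\pi=\pi|_{\Gamma(f)}$ is a biholomorphism onto $\C^n$, with inverse $z\mapsto(z,f(z))$. Hence $\pi\circ\gamma:\Delta\to\C^n$ is holomorphic.

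We then check that $\pi\circ\gamma$ is non-constant. Since $\gamma=\tilde\pi^{-1}\circ(\pi\circ\gamma)$, constancy of $\pi\circ\gamma$ would force $\gamma$ to be constant, which it is not. Moreover, $\pi(Z)=E$, so $\pi\circ\gamma(\Delta)\subset E$.

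More intrinsically, if $A\subset Z$ is an analytic variety of positive dimension, then $\tilde\pi(A)\subset E$ is again an analytic variety of the same positive dimension, because $\tilde\pi$ is a biholomorphism of $\Gamma(f)$ onto $\C^n$ and $A$ is an analytic subset of the complex manifold $\Gamma(f)$. This contradicts the defining property of $E$ from~\cite[Theorem~1.1]{har_shch_tom:wermer}, namely that $E$ contains no analytic variety of positive dimension.

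The only delicate point is matching the meaning of ``no analytic structure'' used in~\cite{har_shch_tom:wermer}, which is phrased via analytic discs or varieties, with the sense intended here. Whichever notion is used is preserved by biholomorphisms, so the argument goes through for either formulation.
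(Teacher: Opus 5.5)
Your proof is correct and follows essentially the same route as the paper. The paper also pushes a positive-dimensional analytic variety $A\subset Z$ forward by the biholomorphism $\tilde\pi:\Gamma(f)\to\C^n$, obtaining a positive-dimensional analytic variety $\tilde\pi(A)\subset\tilde\pi(Z)=E$ and contradicting the choice of $E$; your analytic-disc version is the same idea, phrased in terms of discs.
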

\begin{proof}
	By contradiction, suppose that there exists an analytic variety $A\subset Z$, of positive dimension. Let $B\coloneqq\tilde\pi(A)$. Then $B\subset\tilde\pi(Z)=E$ and $B$ is an analytic variety of positive dimension, since $\tilde\pi$ is a biholomorphism. This contradicts the fact that $E$ has no analytic structure.
\end{proof}

Now recall that $E$ is an $(n-1)$-pseudoconcave subset of $\C^n$, i.e., it has the $(n-2)$-local maximum property in $\C^n$.
\begin{lemma}
	The closed subset $Z$ is an $(n-2)$-local maximum set in $\C^{n+q}$.
\end{lemma}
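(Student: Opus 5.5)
We argue by contradiction, pulling a violating function on $Z$ back to $E$ through the biholomorphism $\tilde\pi^{-1}$, or more concretely through the holomorphic map
\[
	\Phi:\C^n\to\C^{n+q}, \quad \Phi(z)=\bigl(z,f(z)\bigr).
\]
Note that $\Phi(E)=Z$ and $\Phi|_E$ is a homeomorphism onto $Z$ with inverse $\pi|_Z$.

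Suppose $Z$ is not an $(n-2)$-local maximum set. By Definition~\ref{def:locmaxset}, there is a point $\xi_0=\Phi(z_0)\in Z$ with the following property: for each neighbourhood $U$ of $\xi_0$ we can find a compact $K\subset U$ and an $(n-2)$-plurisubharmonic function $\phi$ on $U$ such that
\[
	\max_{Z\cap K}\phi>\max_{Z\cap bK}\phi .
\]
As in the proof of Lemma~\ref{lemma:lms_basis'}, we may take $\phi$ smooth. Set $\tilde U\coloneqq\Phi^{-1}(U)$, which is an open neighbourhood of $z_0$ in $\C^n$, and $\tilde K\coloneqq\Phi^{-1}(K)$, which is closed.

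The key observation is that $\phi\circ\Phi$ is $(n-2)$-plurisubharmonic on $\tilde U$. In the smooth case this is a Levi-form computation. The Levi form of $\phi\circ\Phi$ at $z$ is the Hermitian form
\[
	v\mapsto \operatorname{L}_\phi(\Phi(z))\bigl(d\Phi_z v\bigr),
\]
and $d\Phi_z$ is injective. A restriction of a Hermitian form on $\C^{n+q}$ with at most $n-2$ negative eigenvalues to an $n$-dimensional subspace still has at most $n-2$ negative eigenvalues, by Cauchy interlacing or min–max. Hence $\phi\circ\Phi$ is $(n-2)$-plurisubharmonic. This is the same argument used in Lemma~\ref{lemma:reduction}, but with pullback along a holomorphic immersion instead of a projection.

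We now transport the compact set. Replacing $K$ by $K\cap\pi^{-1}(C)$ for a closed ball $C$ around $z_0$ risks changing the boundary, so it is cleaner to work with compact sets in $\C^n$ directly. Take $\tilde K'\coloneqq\pi(K\cap\Phi(\C^n))=\tilde K$. This set is compact because $\Phi(\C^n)$ is closed and $\pi$ is continuous. Since $\Phi$ is a homeomorphism of $\C^n$ onto the closed set $\Gamma(f)$, we get
\[
	E\cap\tilde K=\Phi^{-1}(Z\cap K).
\]
For the boundaries, the relative boundary of $\tilde K$ in $\C^n$ satisfies $b\tilde K\subset\Phi^{-1}(bK)$, since preimages of interior points are interior. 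Hence $E\cap b\tilde K\subset\Phi^{-1}(Z\cap bK)$. It follows that
\[
	\max_{E\cap\tilde K}\phi\circ\Phi=\max_{Z\cap K}\phi>\max_{Z\cap bK}\phi\geq\max_{E\cap b\tilde K}\phi\circ\Phi .
\]
Because $U$ was arbitrary and the neighbourhoods $\Phi^{-1}(U)$ form a basis at $z_0$, this contradicts the $(n-2)$-local maximum property of $E$ at $z_0$.

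The main obstacle is keeping the boundary bookkeeping correct, namely the inclusion $b\tilde K\subset\Phi^{-1}(bK)$. This holds because $\Phi$ is continuous. There is also a possible issue with non-smooth $\phi$ in the definition. I would handle it by the reduction to smooth, or even strictly, $(n-2)$-plurisubharmonic test functions that Słodkowski's characterisation allows, which the paper already uses implicitly in Lemma~\ref{lemma:lms_basis'}.
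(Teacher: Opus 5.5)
Your proof is correct and is essentially the paper's argument. The paper flattens $\Gamma(f)$ by a holomorphic change of coordinates and restricts $\phi$ to $\{w=0\}$, which is exactly your $\phi\circ\Phi$, and it obtains $(n-2)$-plurisubharmonicity from the Poincaré separation theorem, which is your min--max step. Your choice $\tilde K=\Phi^{-1}(K)$, with $b\tilde K\subset\Phi^{-1}(bK)$, makes explicit the compact set $C$ whose existence the paper only asserts. One small correction: to get the test function defined on a prescribed neighbourhood $\tilde U_0$ of $z_0$, take $U=\tilde U_0\times\C^q$, so that $\Phi^{-1}(U)=\tilde U_0$; merely knowing that the sets $\Phi^{-1}(U)$ form a basis at $z_0$ is not enough.
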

\begin{proof}
	By contradiction, suppose that there exists $p_0\in Z$ such that for each neighbourhood $U\subset\C^{n+q}$ of $p_0$ there exists a compact subset $K\subset U$, with $Z\cap K\neq\emptyset$ compact, and a smooth $(n-2)$-plurisubharmonic function $\phi:U\to\R$ such that
	\[
		\max_{Z\cap K}\phi>\max_{Z\cap bK}\phi
	\]
	where $bK$ denotes the topological boundary of $K$. In particular, there exists $\xi_0\in Z\cap\mathring{K}$ such that
	\[
		\phi(\xi_0)>\phi(\xi) \quad \forall\xi\in Z\cap bK
	\]
	Let $U\subset\C^{n+q}$ be a neighbourhood of $p_0$, with coordinates $(z_1,\dots,z_n,w_1,\dots,w_q)$, such that
	\[
		\Gamma(f)\cap U=\{w_1=\dots=w_q=0\}
	\]
	Take $K,\phi,\xi_0$ relative to this $U$.
	
	Set $V\coloneqq\pi(U)$. Then $V$ is an open neighbourhood of $z_0\coloneqq\pi(\xi_0)$ in $\C^n$. Define
	\[
		\tilde\phi:V\to\R, \quad \tilde\phi(z)=\phi(z,0)
	\]
	(here we are denoting as $(z,w)$ the coordinates on $U$). Then $\tilde\phi$ is a smooth function. We want to prove the $\tilde\phi$ is also $(n-2)$-plurisubharmonic. Note that, for each $z\in V$,
	\[
		\frac{\partial^2\tilde\phi}{\partial z_j\partial\bar{z}_k}(z)=\frac{\partial^2\phi}{\partial z_j\partial\bar{z}_k}(z,0) \quad \forall j,k\in\{1,\dots,n\}
	\]
	and thus $\mathrm{L}_{\tilde\phi}(z)=(\mathrm{L}_\phi(z,0))^n_n$, where $\mathrm{L}_\psi$ denotes the Levi matrix of a function $\psi$ and $A^n_n\in\Mat(n\times n)$ denotes the principal minor of a matrix $A\in\Mat(N\times N)$ given by the first $n$ rows and $n$ columns, i.e.,
	\[
		\mathrm{L}_\phi(z,0)=
		\begin{pmatrix}
			\mathrm{L}_{\tilde\phi}(z) & * \\
			* & *
		\end{pmatrix}
	\]
	For $\xi\in U$, let
	\[
		\sigma\bigl(\mathrm{L}_\phi(\xi)\bigr)=\{\alpha_1(\xi),\dots,\alpha_{n+q}(\xi)\}
	\]
	be the spectrum of $\mathrm{L}_\phi(\xi)$ (with repetitions allowed), ordered decreasingly. Since $\phi$ is $(n-2)$-plurisubharmonic, then $\mathrm{L}_\phi(\xi)$ has at most $n-2$ negative eigenvalues and thus $\alpha_{q+2}(\xi)\geq 0$. Now, for $z\in V$, let
	\[
		\sigma\bigl(\mathrm{L}_{\tilde\phi}(z)\bigr)=\{\beta_1(z),\dots,\beta_n(z)\}
	\]
	be the spectrum of $\mathrm{L}_{\tilde\phi}(z)$ (with repetitions allowed), ordered decreasingly. If
	\[
		A\coloneqq
		\begin{pmatrix}
			\mathrm{I}_n \\
			\vec{0}_{q\times n}
		\end{pmatrix}
		\in\Mat_\C\bigl((n+q)\times n\bigr)
	\]
	then $A^\dagger A=\mathrm{I}_n$ and $A^\dagger\mathrm{L}_\phi(z,0)A=\mathrm{L}_{\tilde\phi}(z)$, where $A^\dagger$ denotes the conjugate transpose of $A$. Thus, by Poincaré Separation Theorem (see~\cite[Theorem~2.1]{rao:separation}),
	\[
		\beta_2(z)\geq\alpha_{q+2}(z,0)\geq 0
	\]
	Therefore $\mathrm{L}_{\tilde\phi}(z)$ has at most $n-2$ negative eigenvalues. By the arbitrariness of $z\in V$, we conclude that $\tilde\phi$ is an $(n-2)$-plurisubharmonic function.
	
	Since $\xi_0\in Z\cap\mathring{K}\subset\Gamma(f)\cap U$, then $\xi_0=(z_0,0)$. Analogously, for each $\xi\in Z\cap bK$ we have that $\xi=(z,0)$ for a unique $z\in\pi(Z)=E$. By assumption, we have that
	\begin{equation}
	\label{eq:contr_locmax}
		\tilde\phi(z_0)=\phi(\xi_0)>\phi(\xi)=\tilde\phi(z) \quad \forall z\in\pi(Z\cap bK)
	\end{equation}
	Take a compact subset $C\subset V$ such that $z_0\in E\cap\mathring{C}$ and $E\cap bC\subset\pi(Z\cap bK)$. 
	By~\eqref{eq:contr_locmax}, we obtain that
	\[
		\max_{E\cap C}\tilde\phi\geq\tilde\phi(z_0)>\max_{E\cap bC}\tilde\phi
	\]
	Therefore $E$ does not have the $(n-2)$-local maximum property. This is a contradiction.
\end{proof}

We have thus constructed, given a holomorphic function $f:\C^n\to\C^q$, an $(n-1)$-pseudoconcave subset $Z$ of $\Gamma(f)$ with no analytic structure. Now consider the identification $\C^{n+q}=\C^n_z\times\C_w\times\C^{q-1}_\zeta$ and let
\[
	\varpi:\C^q\to\R\times\C^{q-1}, \quad \varpi(w,\zeta)=\bigl(\Im(w),\zeta\bigr)
\]
be the projection. Define
\[
	g:\C^n\times\R\to\R\times\C^{q-1}, \quad g(z,\lambda)=\varpi\bigl(f(z)\bigr)
\]
Clearly, $g$ is a smooth function. Writing $f=(f_1,f_2):\C^n_z\to\C_w\times\C^{q-1}_\zeta$, we have that
\[
	g(z,\lambda)=\varpi\bigl(f_1(z),f_2(z)\bigr)=\bigl(\Im(f_1(z)),f_2(z)\bigr)
\]
Hence $\Gamma(f)\subset\Gamma(g)$. On the other hand, let $h:\C^n\to\R$ be a smooth function such that $h|_E\equiv 0$ and $h|_{\C^n\smallsetminus E}>0$. We can take $h$ such that all its derivatives are null on $E$. Define
\[
	\tilde{h}:\C^n\times\R\to\R\times\C^{q-1}, \quad \tilde{h}(z,\lambda)=\bigl(h(z),0\bigr)
\]
and let
\[
	\tilde{g}:\C^n\times\R\to\R\times\C^{q-1}, \quad \tilde{g}=g+\tilde{h}
\]
Then $\tilde{g}$ is a smooth function.
\begin{lemma}
	The graphs $\Gamma(\tilde{g})$ and $\Gamma(f)$ intersect exactly at $Z$.
\end{lemma}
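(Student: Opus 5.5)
We compute the intersection directly, using the identification $\C^{n+q}=\C^n_z\times(\R\times i\R)_w\times\C^{q-1}_\zeta$ that underlies $\Gamma(\tilde g)$. In these coordinates a point $(z,w,\zeta)$ lies in $\Gamma(\tilde g)$ exactly when
\[
	\bigl(\Im(w),\zeta\bigr)=\tilde g\bigl(z,\Re(w)\bigr)=\bigl(\Im(f_1(z))+h(z),f_2(z)\bigr).
\]
It lies in $\Gamma(f)$ exactly when $w=f_1(z)$ and $\zeta=f_2(z)$. We prove the two inclusions separately.

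\emph{Step 1: $Z\subset\Gamma(\tilde g)\cap\Gamma(f)$.} Take $(z,f(z))\in Z$, so that $z\in E$. Then $h(z)=0$, hence $\tilde g(z,\Re(f_1(z)))=(\Im(f_1(z)),f_2(z))$. This says precisely that $(z,f_1(z),f_2(z))\in\Gamma(\tilde g)$. Moreover $Z\subset\Gamma(f)$ by definition.

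\emph{Step 2: $\Gamma(\tilde g)\cap\Gamma(f)\subset Z$.} Take $(z,w,\zeta)$ in both graphs. From $\Gamma(f)$ we get $w=f_1(z)$ and $\zeta=f_2(z)$. Substituting into the defining equation of $\Gamma(\tilde g)$ gives $\Im(f_1(z))=\Im(f_1(z))+h(z)$, so $h(z)=0$. Since $h>0$ on $\C^n\smallsetminus E$, this forces $z\in E$, and therefore $(z,w,\zeta)=(z,f(z))\in\Gamma(f|_E)=Z$.

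Both steps are direct; the only point needing care is that $\Gamma(\tilde g)$ is read with the real part of $w$ as the extra base variable $\lambda$. No earlier result is needed beyond the defining properties of $h$: it vanishes exactly on $E$ and is positive on the complement.
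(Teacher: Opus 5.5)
Your proof is correct and is the paper's argument: both reduce membership in $\Gamma(\tilde g)\cap\Gamma(f)$ to the condition $h(z)=0$ via the computation $\tilde g\bigl(z,\Re(f_1(z))\bigr)=\bigl(\Im(f_1(z))+h(z),f_2(z)\bigr)$, and then use that $h$ vanishes exactly on $E$. The paper phrases this as a single chain of equivalences rather than two inclusions, but the content is identical.
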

\begin{proof}
	Note that every $\xi\in\Gamma(\tilde{g})$ is of the form $(\eta,\tilde{g}(\eta))$ for a unique $\eta\in\C^n\times\R$. On the other hand, every $\xi\in\Gamma(f)$ is of the form $(z,f_1(z),f_2(z))$ for a unique $z\in\C^n$. Hence $\xi\in\Gamma(\tilde{g})\cap\Gamma(f)$ if and only if $\eta=(z,\Re(f_1(z)))$ and $\tilde{g}(\eta)=(\Im(f_1(z)),f_2(z))$. Note that
	\begin{align*}
		\tilde{g}\bigl(z,\Re(f_1(z))\bigr) &= g\bigl(z,\Re(f_1(z))\bigr)+\tilde{h}\bigl(z,\Re(f_1(z))\bigr) \\
		&= \varpi\bigl(f(z)\bigr)+\bigl(h(z),0\bigr) \\
		&= \bigl(\Im(f_1(z))+h(z),f_2(z)\bigr)
	\end{align*}
	Thus $\xi$ is in the intersection if and only if $h(z)=0$. This holds if and only if $z\in E$, i.e., $\xi\in Z$. Therefore $\Gamma(\tilde{g})\cap\Gamma(f)=Z$.
\end{proof}

Therefore we have constructed a smooth function $\tilde{g}:\C^n\times\R\to\R\times\C^p$ (for $n\geq 2$ and $p\geq 0$) and a closed $(n-1)$-pseudoconcave subset $Z\subset\Gamma(\tilde{g})$ which has no analytic structure. Note that we cannot find a holomorphic function $\tilde{f}:\C^n\to\C^q$ such that $Z\subset\Gamma(\tilde{f})\subset\Gamma(\tilde{g})$, as it happens, for example, with $\Gamma(g)$.

We have thus proved that the choice for the index of pseudoconcavity in Theorem~\ref{thm:foliation'} is sharp. Namely, let $Z\subset\C^N$ be an $m$-pseudoconcave subset which is locally the graph of a continuous function over a closed subset of $\C^n\times\R$. Recall the following fact.
\begin{remark}
	Since $Z$ is $m$-pseudoconcave, then it is also $k$-pseudoconcave, for every $k\leq m$.
\end{remark}
Therefore we have two cases:
\begin{itemize}
	\item if $m\geq n$, then $Z$ is $n$-pseudoconcave and thus it is foliated, by Theorem~\ref{thm:foliation'};
	\item if $m<n$, then $Z$ can have no analytic structure, as shown above.
\end{itemize}

\printbibliography

\end{document}